\theoremstyle{definition}
\theoremstyle{plain}
\newtheorem{theorem}{Theorem}[section]
\newtheorem{proposition}[theorem]{Proposition}
\newtheorem{lemma}[theorem]{Lemma}
\theoremstyle{remark}
\def\thm@space@setup{%
  \thm@preskip=\parskip \thm@postskip=0pt
}
\numberwithin{equation}{section}
\DeclareMathOperator*\essinf{ess\,inf}
\DeclareMathOperator*\esssup{ess\,sup}
\begin{document}

\title{Multi-dimensional Optimal Trade Execution under Stochastic Resilience\thanks{Financial support through the TRCRC 190 {\it Rationality and competition; the economic performance of individuals and firms} and {\it d-fine GmbH} is gratefully acknowledged. We thank Paulwin Graewe for valuable comments and suggestions. This paper was finished while Horst was visiting the Risk Management Institute at the National University of Singapore. Grateful acknowledgement is made for hospitality.} }

\author{Ulrich Horst\thanks{Humboldt University Berlin, Department of Mathematics and School of Business and Economics, Unter den Linden 6, 10099 Berlin; email: horst@math.hu-berlin.de} and Xiaonyu Xia\thanks{Humboldt University Berlin, Department of Mathematics, Unter den Linden 6, 10099 Berlin; email: xiaxiaon@math.hu-berlin.de}}
\maketitle

\begin{abstract}
	We provide a general framework for analyzing linear-quadratic multi-dimensional portfolio liquidation problems with instantaneous and persistent price impact and stochastic resilience. We show that the value function can be described by a multi-dimensional non-monotone backward stochastic Riccati differential equations (BSRDE) with a singular terminal condition in one component. We prove the existence of a solution to the BSRDE system and characterise both the value function and the optimal strategy in terms of that solution. We prove that the solution to the liquidation problem can be approximated by the solutions to a sequence of unconstrained problems with increasing penalisation of open positions at the terminal time. Our proof is based on a much fine a priori estimate for the approximating BSRDE systems, from which we infer the convergence of the optimal trading strategies for the unconstrained models to an admissible liquidation strategy for the original problem.  
\end{abstract}

{\bf Keywords:} stochastic control, multi-dimensional backward stochastic Riccati differential equation, multi-dimensional portfolio liquidation, singular terminal value

{\bf AMS subject classification:} 93E20, 60H15, 91G80

\section{Introduction and overview}

	Let $T \in (0,\infty)$ and let $(\Omega,\mathcal F,(\mathcal F_t)_{t\in[0,T]},\mathbb P)$ be filtered probability space that carries a one di\-men\-sion\-al standard Brownian motion  $W=(W_t)_{t\in[0,T]}$. We assume throughout that $(\mathcal F_t)_{t\in[0,T]}$ is the filtration generated by $W$ completed by all the null sets and that $\mathcal F = \mathcal F_T$. For a  Euclidean space $H$ we denote by $L^2_{\mathcal F}(0,T;H)$ and $L^\infty_{\mathcal F}(0,T;H)$
the Banach spaces of all $H$-valued, $\mathcal F_t$-adapted, square-integrable stochastic process $f$ on $[0,T]$, endowed with the norms $(\mathbb E\int_0^T |f(t)|^2\,dt)^{1/2}$, respectively $\esssup_{t,\omega}|f(t,\omega)|$. 

	For any $d \in \mathbb N$ we denote by $\mathcal S^d$ and $\mathcal S^d_{+}$ the Euclidean space of all  $d\times d$  symmetric, respectively nonnegative definite $d \times d$ matrices. For any two matrices $A, B$ from $\mathcal S^d$ we write $A>B$ and $A\geq B$ if $A-B$ is positive definite, respectively nonnegative definite.  In what follows all equations and inequalities are to be understood in the $\mathbb P$-a.s.~sense. 

For a given $d \in \mathbb N$ we consider the $d$-dimensional linear-quadratic stochastic control problem
\begin{equation} \label{control}
	\essinf_{\xi \in L^2_{\mathcal F}(0,T;\mathbb R^d)} \mathbb E\left[ \int_0^T\frac{1}{2}\xi(s)^\mathrm{T}\Lambda\xi(s)+Y(s)^\mathrm{T}\xi(s)+\frac{1}{2}X(s)^\mathrm{T}\Sigma(s) X(s)\,ds\,		\right]
\end{equation}
subject to the state dynamics
\begin{equation} \label{state-dynamics} 
\left\{\begin{aligned}
&X(t)= x - \int_0^t \xi(s)\,ds,  \quad t \in [0,T],\\
&X(T)= 0, \\
&Y(t)=y + \int_0^t\{-\rho(s)Y(s)+\gamma\xi(s)\}\,ds, \quad t \in [0,T].
\end{aligned}\right.
\end{equation}
Here, $\Lambda \in \mathcal S^d$ is positive definite, $\gamma=\textup{diag}(\gamma_i)$ is a positive definite diagonal matrix, and $\rho=\textup{diag}(\rho_i)$ and $\Sigma$ are progressively measurable essentially bounded $\mathcal S^d_+$-valued process:
\begin{equation}\label{matrices}
	0<\Lambda,\gamma\in \mathcal S^d; \rho, \Sigma \in L^\infty_{\mathcal F}(0,T;\mathcal S^d_+).
\end{equation}

Control problems of the above form arise in models of multi-dimensional optimal portfolio liquidation under market impact. In such models, $X(t) \in \mathbb R^d$ denotes the portfolio the investor needs to liquidate, and $\xi(t) \in \mathbb R^d$ denotes the rate at which the different stocks are traded at time $t \in [0,T]$. The terminal constraint $X(T) = 0$ is the {\it liquidation constraint}. The process $Y$ describes the persistent price impacts caused by past trades. 
The process $\rho$ describes how fast the order books recover from past trades. The matrix $\Lambda$ describes an additional instantaneous impact factor. The first two terms of running cost function in (\ref{control}) capture the expected liquidity cost resulting from the instantaneous and the persistent impact, respectively. The third term can be interpreted as a measure of the market risk associated with an open position. It penalises slow liquidation. 

The majority of the portfolio liquidation literature allows for only one of the two possible price impacts. The first approach, initiated by Bertsimas and Lo~\cite{BertsimasLo98} and Almgren and Chriss~\cite{AlmgrenChriss00} and later generalized by many authors including \cite{AnkirchnerJeanblancKruse14, GraeweHorstQiu13, KrusePopier16} describes the price impact as a purely temporary effect that depends only on the present trading rate. A second approach, initiated by Obizhaeva and Wang~\cite{ObizhaevaWang13} and later generalized in, e.g.~\cite{FruthSchoenebornUrusov14, FruthSchoenebornUrusov15} assumes that price impact is persistent with the impact of past trades on current prices decaying over time.  
Graewe and Horst \cite{GH2017} studied optimal execution problems with instantaneous and persistent price impact and stochastic resilience. In their model the value function can be represented by the solution to a coupled non-monotone three-dimensional stochastic Riccati equation. 

In this paper we extend the model in \cite{GH2017} to multi-dimensional portfolios. Our approach uses a penalisation method; we show that the solution to the liquidation problem can be obtained by a sequence of solutions to unconstrained problems where the terminal state constraint is replaced by an increased penalisation of open positions at the terminal time.  The penalisation technique has previously been applied to {\it one-dimensional} liquidation problems by many authors. When the value function can be characterised by a {\it one-dimensional} equation, the monotonicity of value function in the terminal condition can easily be established using standard comparison principles for PDEs or BSDEs. These comparison principles typically do not carry over to higher dimensions, which renders the analysis of multi-dimensional problems, especially the verification result much more complex. We extend the penalisation method to multi-asset liquidation problems with stochastic resilience. As a byproduct we obtain a convergence result for the single-asset model analysed in \cite{GH2017}. The convergence result provides an important consistency result for both the constrained and the unconstrained liquidation problem. Considering sequences of unconstrained problems with increasing penalization of open positions is reasonable only if some form of convergence for the optimal trading strategies can be established. Likewise, the constrained problem should approximate unconstrained problems in some sense.   

Several multi-dimensional liquidation models with deterministic cost functions and deterministic resilience have previously been considered in the literature. The  special case $\rho\equiv0, y=0, $ and $\Sigma\equiv const.$ in \eqref{control} and \eqref{state-dynamics} corresponds to the multiple asset model of  Almgren and Chriss \cite{AlmgrenChriss00}. This model was generalised by Kratz and Sch\"oneborn \cite{KratzSchoeneborn12} to discrete-time multi-asset liquidation problems when an investor trades simultaneously in a traditional venue and a dark pool. In the follow-up work \cite{KratzSchoeneborn15}, the same authors studied a continuous-time multi-asset liquidation problem with dark pools. The benchmark case of deterministic coefficients and zero permanent impact ($\gamma =0$) in (\ref{state-dynamics}) corresponds to the model in \cite{KratzSchoeneborn15} without a dark pool.  A model of optimal basket liquidation for a CARA investor with general deterministic cost function was  analyzed by Schied et al \cite{SchiedSchoenebornTehranchi10}. In their model there is no loss in generality in restricting the class of admissible liquidation strategies to deterministic ones. Later, Sch\"oneborn  \cite{Schoeneborn16} considered an infinite-horizon multi-asset portfolio liquidation problem for a general von Neumann-Morgenstern investor with general deterministic temporary and linear permanent impact functions. He characterized the value function as the solution to the two-dimensional PDEs and showed that the optimal portfolio process depends only on the co-variance and cross-asset market impact of the assets in his setting.  Alfonsi et al. \cite{AlfonsiKloeckSchied16} considered a discrete-time model of optimal basket liquidation with linear transient price impact and general deterministic resilience. Existence of an optimal liquidation strategy in their model is guaranteed if the decay kernel corresponds to a matrix-valued positive definite function but additional assumptions on the decay kernel are required for the optimal liquidation strategies to be well-behaved. In their recent paper, Schneider and Lillo \cite{SchneiderLillo2017} established necessary conditions on the size and impact of cross-impact for the absence of dynamic arbitrage in a continuous time version of \cite{AlfonsiKloeckSchied16} that can be directly verified on data.

We provide a general framework for analyzing multi-dimensional portfolio liquidation problems with linear-quadratic cost functions and stochastic resilience. If the cost and the resilience coefficients are described by general adapted stochastic processes, then the value functions can be described by a matrix-valued linear-quadratic backward stochastic Riccati differential equation (BSRDE) with a singular terminal component. In order to prove the existence of optimal liquidation strategies we first analyse the unconstrained problems with finite end costs. We show that the value functions for unconstrained problems are given by the solutions to BSRDE systems with finite terminal value after a linear transformation. For the benchmark case of uncorrelated assets this systems can be decomposed into a series of subsystems for which a priori estimates similar to those in \cite{GH2017} can be established. Using a novel comparison result for matrix-valued BSRDEs given in the appendix, we prove that the solutions to the BSRDE systems can be uniformly bounded from above and below on compact time intervals by two benchmark models with uncorrelated assets. This allows us to prove that the pointwise (in time) limit of the solutions to these systems exists when the degree of penalisation tends to infinity. This limit yields a candidate value function for the liquidation problem. 

The verification argument is much more involved. It requires a much finer then usual a priori estimate for the approximating BSRDE systems,  from which to infer the convergence of the optimal trading strategies for the unconstrained models to an admissible liquidation strategy for the original problem. The convergence of the optimal strategies allows us to carry out the verification argument and to prove that the limiting BSRDE does indeed characterise the value function of the original liquidation problem. We emphasise that in our multi-dimensional setting the convergence of the optimal trading strategies is required for the verification argument. This is typically not the case in one-dimensional models where much coarser a priori estimates are sufficient to carry out the verification argument.     

When all the cost coefficients in our model are deterministic constants, then the value function associated with the optimization problem above can be described by a linear-quadratic ODE system with singular terminal component. The deterministic benchmark case can be analyzed numerically. Our simulations suggest that the correlation between the assets' fundamental value is a key determinant of optimal liquidation strategies. Consistent with the \cite{GH2017} we also find that when the instantaneous impact factor is {\sl low}, then the optimal liquidation strategies are strongly convex with the degree of convexity depending on the correlation and the relative liquidity across assets. Surprisingly, a qualitatively similar result is obtained for {\sl high} persistent impact factors. The intuitive reason is that early trading benefits from resilience.  

The remainder of this paper is structured as follows. The model and main results are summarised in Section \ref{model}. All proofs are carried out in Section \ref{Proofs of the main results}. Numerical simulations are provided in Section 4. Section 5 concludes. An  appendix contains a multi-dimensional comparison principle for BSRDEs and technical estimates that are omitted in earlier sections.

{\it Notational conventions.}
We adopt the convention that $C$ is a constant, which may vary from line to line. Moreover, we will use the following spaces of progressively measurable processes:
\begin{align*}
	\mathcal L^2_{\mathcal F}(\Omega;C([0,T];H)) =& \left\{ f: \Omega \to C([0,T];H) : \mathbb E [\max_{t\in[0,T]}|f(t)|^2] < \infty\right\} \\
	\mathcal L^\infty_{\mathcal F}(\Omega;C([0,T];H)) =& \left\{ f: \Omega \to C([0,T];H) :  \esssup_{\omega\in \Omega} \max_{t\in[0,T]}|f(t,\omega)| < \infty \right\}.
\end{align*}

We say that a sequence of stochastic processes $\{f^n(\cdot)\}_{n \in \mathbb N}$ converges compactly to $f(\cdot)$ on $[0,T)$ if it converges uniformly to $f(\cdot)$ on every compact subinterval. Whenever the notation $T^-$ appears  we mean that the statement holds for all the $T'<T$ when $T^-$ is replaced by $T'$, e.g. $L^2_{\mathcal F}(0,T^-;\mathcal S^d)=\bigcap_{T'<T}L^2_{\mathcal F}(0,T';\mathcal S^d)$. 
For $f\in L_{\mathcal F}^\infty(\Omega,C([0,T^-];H))$ we mean by $L^\infty$-$\lim_{t\rightarrow T}|f_t|= \infty$ that for every $C>0$ there exists $T'<T$ such that $f_t\geq C$ for all $t\in[T',T)$, $\mathbb P$-a.e. 

For any vector or matrix $B=(b_{ij}),$ we put $|B|:=\sqrt{\sum_{ij}b^2_{ij}}$. For $A \in {\cal S}^d$ the largest (smallest) eigenvalue is denoted $a_{\max}$ ($a_{\min}$) and $|A|_{2,2}=a_{\max}$ denotes the induced matrix norm. Then, $|A|_{2,2}\leq |A|$. For any $A \in {\cal S}^d_+$, the square root $\sqrt{A}$ exists and $AB$ and $\sqrt{A} B \sqrt{A}$ have same eigenvalues. Hence $|AB|_{2,2} = |\sqrt{A} B \sqrt{A}|_{2,2}$ and 
$\textup{tr}(AB)=\textup{tr}(\sqrt{A} B \sqrt{A}).$ Moreover, $a_{\min} I_d \leq A \leq a_{\max} I_d$ where $I_d$ denotes the $d \times d$ identity matrix.


\section{Main Results}\label{model}

For any initial state $(t,x,y) \in [0,T) \times \mathbb R^d \times \mathbb R^d$ we define by 
\begin{equation} \label{control-problem}
	V(t,x,y):=\essinf_{\xi \in \mathcal A(t,x)} \mathbb E\left[ \int_t^T\frac{1}{2}\xi(s)^\mathrm{T}\Lambda\xi(s)+Y(s)^\mathrm{T}\xi(s)+\frac{1}{2}X(s)^\mathrm{T}\Sigma(s) X(s)\,ds\, | \mathcal F_t			\right]
\end{equation}
the {\it value function} of the stochastic control problem \eqref{control} subject to the state dynamics
\begin{equation} \label{state-dynamics2} 
\left\{\begin{aligned}
&dX(s)= - \xi(s)\,ds,  \quad s \in [t,T],\\
&dY(s)=\{-\rho(s)Y(s)+\gamma\xi(s)\}\,ds, \quad s \in [t,T].
\end{aligned}\right.
\end{equation}
Here, the essential infimum is taken over the class $\mathcal A(t,x)$ of all admissible {\it liquidation strategies}, that is over  all {\it trading strategies} $\xi\in L_{\mathcal F}^2(t,T;\mathbb R^d)= L_{\mathcal F}^2$ that satisfy the liquidation constraint 
\[
	X(T)=0 \quad \mbox{a.s.}
\]

We characterise the value function by a unique solution to a matrix-valued backward BSRDE with singular terminal condition. Our approach is based on an approximation argument. To this end, we consider, for any $n \in \mathbb N$, the value function 
\begin{equation} \label{control-modified}
\begin{split}
V^n(t,x,y)= \essinf_{\xi \in L_{\mathcal F}^2}\mathbb E\bigg\{&nX(T)^\mathrm{T}X(T)+2Y(T)^\mathrm{T}X(T)\\
&+\int_t^T\big[\frac{1}{2}\xi(s)^\mathrm{T}\Lambda\xi(s)+Y(s)^\mathrm{T}\xi(s)+\frac{1}{2}X(s)^\mathrm{T}\Sigma(s) X(s)\big]\,dr|\mathcal F_t\bigg\}
\end{split}
\end{equation}
of a corresponding unconstrained optimisation problem where the liquidation constraint is replaced by a finite penalty term.
We solve the unconstrained problem first and then show that the solutions to \eqref{control-modified} converge to the value function \eqref{control-problem} as $n\rightarrow \infty.$

	A pair of random fields $(V^n,N^n):\Omega\times[0,T)\times \mathbb R^d\times\mathbb R^d\rightarrow \mathbb R\times\mathbb R$ is called a \textit{classical solution} to \eqref{control-modified} if it satisfies the following conditions:
\hspace{-3mm}\begin{itemize}
	\item for each $t\in[0,T)$, $V^n(t,x,y)$ is continuously differentiable in $x$ and $y$,
	\item for each $(x,y)\in\mathbb R^d\times \mathbb R^d$, $(V^n(t,x,y),\partial_x V^n(t,x,y)),\partial_y V^n(t,x,y))_{t\in[0,T]}\in L_{\mathcal F}^\infty(\Omega;C([0,T];\mathbb R\times \mathbb R^d\times \mathbb R^d))$,
	\item for each $(x,y)\in\mathbb R^d\times \mathbb R^d$, $(N^n(t,x,y))_{t\in[0,T]}\in L^2_{\mathcal F}(0,T;\mathbb R)$,
	\item for all $0\leq t\leq s\leq T$ and $x,y\in\mathbb R^d$ it holds that
\begin{equation}\label{HJB}
\hspace{-4.5mm} \left\{
\begin{aligned}
&\begin{aligned}
V^n(t,x,y) =  &V^n(s,x,y)+\int_t^s\inf_{\xi\in\mathbb R^d} \left\{
-\partial_x V^n(r,x,y)^\mathrm{T}\xi -\partial_y V^n(r,x,y)^\mathrm{T}(\rho(r) y -\gamma\xi) \right. \\
	 & \left. \quad +\frac{1}{2}\xi^\mathrm{T}\Lambda\xi+y^\mathrm{T}\xi+\frac{1}{2}x^\mathrm{T}\Sigma(r) x \right\}\,dr 
	  -\int_t^sN^n(r,x,y)\,dW(r),
 \end{aligned}\\
& V^n(T,x,y)=nx^\mathrm{T}x+2y^\mathrm{T}x.
\end{aligned}
\right.
\end{equation}
\end{itemize}
The linear-quadratic structure of the control problem suggest the ansatz 
\begin{equation} \label{LQ-ansatz} 
\begin{split}
V^n(t,x,y) &=\begin{bmatrix}
 x^\mathrm{T}&y^\mathrm{T}
\end{bmatrix}P^n(t)\begin{bmatrix}
\ x\ \\
y 
\end{bmatrix}\\
N^n(t,x,y) &=\begin{bmatrix}
 x^\mathrm{T}&y^\mathrm{T}
\end{bmatrix}M^n(t)\begin{bmatrix}
\ x\ \\
y 
\end{bmatrix}
\end{split}
\end{equation}
for the solution to the HJB equation, where $P^n, M^n$ are progressively measurable $\mathcal S^{2d}$-valued processes. The following lemma shows that this ansatz reduces our HJB equation \eqref{HJB} to the matrix-valued stochastic Riccati equation,
\begin{equation}  \label{BSRDE} 
\begin{aligned}
-dP(t)=&\Bigg\{-\left(P(t)\begin{bmatrix}
-I_d\\
\gamma
\end{bmatrix}+\begin{bmatrix}
0\\
I_d
\end{bmatrix}\right)\Lambda^{-1}\left(\begin{bmatrix}
-I_d&\gamma
\end{bmatrix}P(t)+\begin{bmatrix}
0&I_d
\end{bmatrix}\right)\\
&\quad+P(t)\begin{bmatrix}
0&0\\
0&-\rho(t)
\end{bmatrix}+\begin{bmatrix}
0&0\\
0&-\rho(t)
\end{bmatrix}P(t)+\begin{bmatrix}
\Sigma(t)&0\\
0&0
\end{bmatrix}
\Bigg\}\,dt-M(t)\,dW(t),
\\\
P(T)=&\begin{bmatrix}
nI_d&I_d\\
I_d&0
\end{bmatrix},
\end{aligned}
\end{equation}
where $I_d$ is the $d\times d$ identity matrix. The proof is standard and hence omitted. 
\begin{lemma}\label{feedback}
	If the vector $$(P^n,M^n)\in L_{\mathcal F}^\infty(\Omega;C([0,T];\mathcal S^{2d}))\times L_{\mathcal F}^2(0,T;\mathcal S^{2d})$$ solves the BSRDE system~\eqref{BSRDE}, then the random field $(V^n,N^n)$ given by the linear-quadratic ansatz \eqref{LQ-ansatz} solves the HJB equation~\eqref{HJB} and the infimum in~\eqref{HJB} is attained by
\begin{equation} \label{feedback-form}
	\xi^{n,*}(t,x,y)=-\Lambda^{-1}\left(\begin{bmatrix}
-I_d&\gamma
\end{bmatrix}P^n(t)+\begin{bmatrix}
0&I_d
\end{bmatrix}\right)\begin{bmatrix}
\ x\ \\
y 
\end{bmatrix}.
\end{equation}
\end{lemma}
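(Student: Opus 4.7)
\emph{Proof plan.} The argument is a standard verification for LQ stochastic control: substitute the ansatz \eqref{LQ-ansatz} into the HJB equation \eqref{HJB}, pointwise-minimise the integrand in $\xi$, and match the resulting drift coefficient against the right-hand side of the BSRDE \eqref{BSRDE}. Throughout I would treat $(x,y)$ as a deterministic parameter and write $z := (x^T, y^T)^T$ for brevity. The required $C^1$-smoothness of $V^n$ in $(x,y)$ and the path regularity of $V^n$, $\partial_x V^n$, $\partial_y V^n$, $N^n$ in $t$ are inherited directly from the hypothesised regularity of $(P^n, M^n)$; no analytic argument is needed, and the content of the lemma is purely an algebraic identity between the ansatz and the BSRDE.

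Differentiating the ansatz and using symmetry of $P^n(t)$ gives $\partial_x V^n = 2[I_d,\,0]\,P^n(t)\,z$ and $\partial_y V^n = 2[0,\,I_d]\,P^n(t)\,z$. Substituting into the integrand of \eqref{HJB} produces a strictly convex quadratic in $\xi$ with Hessian $\Lambda > 0$ (by \eqref{matrices}), so the infimum is attained uniquely at the solution of the first-order condition $\Lambda\xi = \partial_x V^n - \gamma\,\partial_y V^n - y$. Rearranging and collecting the coefficients of $(x,y)$ in that critical point yields the feedback form \eqref{feedback-form}, which establishes the ``infimum attained'' assertion of the lemma.

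The main algebraic step is to plug $\xi^{n,*}$ back into the Hamiltonian and complete the square. After cancellation, the minimised integrand takes the form $z^T G^n(t) z$ with
\begin{align*}
G^n(t) &= -\left(P^n(t)\begin{bmatrix}-I_d\\ \gamma\end{bmatrix}+\begin{bmatrix}0\\ I_d\end{bmatrix}\right)\Lambda^{-1}\left([-I_d,\gamma]P^n(t)+[0,I_d]\right) \\
& \quad + P^n(t)\begin{bmatrix}0&0\\0&-\rho(t)\end{bmatrix}+\begin{bmatrix}0&0\\0&-\rho(t)\end{bmatrix}P^n(t) + \begin{bmatrix}\Sigma(t)&0\\0&0\end{bmatrix},
\end{align*}
which is precisely the drift coefficient appearing in \eqref{BSRDE}; the symmetrisation of the resilience block uses symmetry of $P^n$. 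Sandwiching the integrated form of \eqref{BSRDE} between $z^T$ and $z$ then recovers the integrated HJB equation with $N^n(r,x,y) = z^T M^n(r) z$, and the terminal condition $V^n(T,x,y) = n x^T x + 2 y^T x$ is immediate from the specified $P^n(T)$. The only real obstacle is the bookkeeping of the block-matrix completion of the square; there are no analytic subtleties, which is presumably why the paper omits the proof.
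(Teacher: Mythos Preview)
The paper explicitly omits the proof of this lemma, stating ``The proof is standard and hence omitted.'' Your outline is precisely the standard LQ verification argument the authors have in mind---substitute the quadratic ansatz, minimise the strictly convex Hamiltonian in $\xi$ via the first-order condition, and identify the resulting drift with the generator of the BSRDE~\eqref{BSRDE}---so there is nothing to compare and your approach is correct.
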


Bismut \cite{Bismut1976} and Peng \cite{Peng1992} proved the existence and uniqueness of solutions to general BSRDEs, but require the coefficient of degree zero to be nonnegative definite. This requirement is not satisfied in our case. We can overcome this problem by the linear transformation $$Q= P+\begin{bmatrix}
0&0\\
0&\gamma^{-1}
\end{bmatrix}$$ and by considering the resulting BSRDE:
\begin{equation}  \label{BSRDE'} 
\begin{aligned}
-dQ(t)=&\Bigg\{-Q(t)\begin{bmatrix}
-I_d\\
\gamma
\end{bmatrix}\Lambda^{-1}\begin{bmatrix}
-I_d&\gamma
\end{bmatrix}Q(t)+Q(t)\begin{bmatrix}
0&0\\
0&-\rho(t)
\end{bmatrix}+\begin{bmatrix}
0&0\\
0&-\rho(t)
\end{bmatrix}Q(t)\\
&\quad+\begin{bmatrix}
\Sigma(t)&0\\
0&\gamma^{-1}\rho(t)+\rho(t)\gamma^{-1}
\end{bmatrix}
\Bigg\}\,dt-M(t)\,dW(t),
\\
Q(T)=&\begin{bmatrix}
nI_d&I_d\\
I_d&\gamma^{-1}
\end{bmatrix}.
\end{aligned}
\end{equation}
The matrix $\begin{bmatrix}
nI_d&I_d\\
I_d&\gamma^{-1}
\end{bmatrix}$ is nonnegative definite if $n\geq\gamma_{\max}.$ In this case, all the coefficients in \eqref{BSRDE'} satisfy the requirements in \cite{Bismut1976} and \cite{Peng1992} (see also \cite[Proposition 2.1]{Kohlmann2003}). Hence, we have the following existence result.
\begin{theorem}
For every $n\geq\gamma_{\max},$ the BSRDE \eqref{BSRDE'} has a unique solution  $$(Q^n,M^n)\in L_{\mathcal F}^\infty(\Omega;C([0,T];\mathcal S^{2d}_+))\times L_{\mathcal F}^2(0,T;\mathcal S^{2d}).$$
\end{theorem}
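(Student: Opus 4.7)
The statement is really an immediate application of classical BSRDE existence results once one has verified that \eqref{BSRDE'} sits in the right framework. My plan is therefore to (i) check the three structural hypotheses of Bismut--Peng--Kohlmann--Tang for \eqref{BSRDE'}, (ii) invoke the cited existence/uniqueness theorem, and (iii) argue that the resulting solution takes values in $\mathcal S^{2d}_+$.

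For (i), I would check three items. First, the terminal matrix $\begin{bmatrix} nI_d & I_d \\ I_d & \gamma^{-1}\end{bmatrix}$ is nonnegative definite iff, via the Schur-complement test, $\gamma^{-1}>0$ (which is automatic since $\gamma>0$) and $nI_d - I_d\cdot\gamma\cdot I_d = nI_d-\gamma\geq0$; the latter is exactly the threshold $n\geq\gamma_{\max}$. Second, the inhomogeneous (zero-order) coefficient in \eqref{BSRDE'} is
\[
\begin{bmatrix} \Sigma(t) & 0 \\ 0 & \gamma^{-1}\rho(t)+\rho(t)\gamma^{-1}\end{bmatrix},
\]
which is nonnegative definite: $\Sigma(t)\geq0$ by hypothesis \eqref{matrices}, and the lower block equals $2\gamma^{-1}\rho(t)\geq0$ because $\gamma$ and $\rho(t)$ are diagonal (hence commute), with $\gamma>0$ and $\rho(t)\geq0$. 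Third, the Riccati drift has the form $-QB^\mathrm{T}\Lambda^{-1}BQ + A(t)^\mathrm{T} Q + QA(t) + \tilde C(t)$ with $A(t)=\begin{bmatrix}0&0\\ 0&-\rho(t)\end{bmatrix}$ and $B=\begin{bmatrix}-I_d & \gamma\end{bmatrix}$, so the ``control weight'' $\Lambda$ is deterministic and uniformly positive definite. The coefficients $A$ and $\tilde C$ are progressively measurable and essentially bounded by \eqref{matrices}.

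For (ii), the above three conditions are precisely the standing hypotheses in Bismut \cite{Bismut1976}, Peng \cite{Peng1992}, and Proposition~2.1 of Kohlmann--Tang \cite{Kohlmann2003} (in the case of deterministic diffusion coefficient of the state, which is our situation since \eqref{state-dynamics2} has no noise). Applying that result directly to \eqref{BSRDE'} yields a unique pair $(Q^n,M^n)\in L^\infty_{\mathcal F}(\Omega;C([0,T];\mathcal S^{2d}))\times L^2_{\mathcal F}(0,T;\mathcal S^{2d})$.

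For (iii), to upgrade the range of $Q^n$ from $\mathcal S^{2d}$ to $\mathcal S^{2d}_+$, I would give the stochastic-control representation of $Q^n$: by Lemma \ref{feedback} (and the linear transformation $Q=P+\text{diag}(0,\gamma^{-1})$), $Q^n(t)$ represents the value of an unconstrained LQ problem whose running and terminal costs are pointwise nonnegative, so $(x^\mathrm{T},y^\mathrm{T})Q^n(t)(x^\mathrm{T},y^\mathrm{T})^\mathrm{T}\geq 0$ for every $(x,y)$, whence $Q^n(t)\in\mathcal S^{2d}_+$. Alternatively one may apply the multi-dimensional comparison principle for BSRDEs given in the appendix, comparing \eqref{BSRDE'} against the trivial subsolution $0$ using the nonnegativity established in (i).

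The main obstacle is really only the bookkeeping in item~(i)---in particular, recognising that it is precisely the Schur complement of the terminal block matrix that forces the threshold $n\geq\gamma_{\max}$, which is the reason the transformation by $\text{diag}(0,\gamma^{-1})$ was introduced in the first place. Everything else is a direct citation.
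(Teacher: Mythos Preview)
Your proposal is correct and follows exactly the paper's approach: the paper's justification (given in the paragraph immediately preceding the theorem) is simply that the terminal matrix is nonnegative definite for $n\geq\gamma_{\max}$ and that all coefficients of \eqref{BSRDE'} then satisfy the hypotheses of Bismut \cite{Bismut1976}, Peng \cite{Peng1992}, and \cite[Proposition~2.1]{Kohlmann2003}, which is precisely what you verify in detail via the Schur complement and the diagonality of $\gamma,\rho$. Your step~(iii) is in fact already contained in the cited results (they yield $Q^n\in\mathcal S^{2d}_+$ directly under the nonnegativity hypotheses you checked), so it is redundant but harmless.
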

The preceding theorem implies the existence and the uniqueness of a solution 
$$(P^n,M^n)\in L_{\mathcal F}^\infty(\Omega;C([0,T];\mathcal S^{2d}))\times L_{\mathcal F}^2(0,T;\mathcal S^{2d})$$
to the BSRDE \eqref{BSRDE}. The following theorem shows that the solution to the unconstrained optimisation problem can be given in terms of $P^n$. The proof is given in Section 3 below.

\begin{theorem} \label{OPT-modified}
Let $n>n_0,$ where
 \begin{equation}\label{n_0}
\begin{aligned}
n_0:&=\max\{\lambda_{\min}(\sqrt{1+\alpha}+1)+\gamma_{\min},(\beta+1)\gamma_{\max}+1\}, \\
\beta:&=3+2||\rho||^2_{L^{\infty}} \\
\alpha: & =\frac{||\Sigma||_{L^{\infty}}+2\gamma_{max}||\rho||_{L^{\infty}}}{\lambda_{\min}}\\
\end{aligned}
\end{equation}
Let $(P^n,M^n)$ be the unique solution of the BSRDE \eqref{BSRDE}. Then the value function  \eqref{control-modified} is of the linear-quadratic form 
\begin{equation*}
\begin{split}
V^n(t,x,y) &=\begin{bmatrix}
 x^\mathrm{T}&y^\mathrm{T}
\end{bmatrix}P^n(t)\begin{bmatrix}
\ x\ \\
y 
\end{bmatrix}
\end{split}
\end{equation*}
 and the optimal $\xi^{n,*}$ is given in feedback form by \eqref{feedback-form}.
\end{theorem}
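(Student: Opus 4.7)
By Lemma \ref{feedback}, once the BSRDE solution $(P^n,M^n)$ is in hand, the quadratic form $[x^\mathrm{T}\,y^\mathrm{T}]P^n(t)[x;y]$ already yields a classical solution to the HJB equation \eqref{HJB} with the pointwise optimum attained at the feedback $\xi^{n,*}$ of \eqref{feedback-form}. The remaining task is the verification step: identifying this candidate with the essential infimum in \eqref{control-modified} and certifying that $\xi^{n,*}$ is admissible. My plan is the classical It\^o-plus-completion-of-squares argument, adapted to the present BSDE setting and to a general admissible control.

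For any $\xi\in L^2_{\mathcal F}$, I would apply It\^o's formula to $\phi(s):=[X(s)^\mathrm{T}\,Y(s)^\mathrm{T}]P^n(s)[X(s);Y(s)]$ on $[t,T]$. Since $(X,Y)$ has bounded variation paths, $d\phi$ is the sum of the BSRDE-induced drift of $P^n$ and the state-induced cross terms from $dX=-\xi\,ds$ and $dY=(-\rho Y+\gamma\xi)\,ds$. The $A$-type contributions (with $A=\mathrm{diag}(0,-\rho)$) cancel, and the quadratic-in-$\xi$ part combines with the running cost of \eqref{control-modified} into the perfect square $(\xi-\xi^{n,*})^\mathrm{T}\Lambda(\xi-\xi^{n,*})$, up to a scalar factor fixed by the ansatz \eqref{LQ-ansatz}. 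Integrating from $t$ to $T$, using $\phi(T)=n|X(T)|^2+2Y(T)^\mathrm{T}X(T)$, and taking $\mathbb E[\cdot\mid\mathcal F_t]$ (with the stochastic integral killed by localization, see below) then produces a cost identity of the shape
\begin{equation*}
\mathrm{cost}(\xi\mid\mathcal F_t)\;=\;[x^\mathrm{T}\,y^\mathrm{T}]P^n(t)[x;y]\;+\;c\,\mathbb E\Bigl[\int_t^T(\xi-\xi^{n,*})^\mathrm{T}\Lambda(\xi-\xi^{n,*})\,ds\,\Big|\,\mathcal F_t\Bigr]
\end{equation*}
for some constant $c>0$. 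Since $\Lambda>0$, this immediately gives $\mathrm{cost}(\xi\mid\mathcal F_t)\ge[x^\mathrm{T}\,y^\mathrm{T}]P^n(t)[x;y]$, with equality precisely when $\xi=\xi^{n,*}(s,X^*(s),Y^*(s))$, proving both the characterization of $V^n$ and the optimality of the feedback. Admissibility of $\xi^{n,*}$ I would verify directly: the closed-loop system is a pathwise linear ODE with coefficients bounded by $\|P^n\|_\infty$, $\|\rho\|_\infty$, $\|\gamma\|_\infty$, so Gronwall delivers $(X^*,Y^*)\in L^\infty_{\mathcal F}(\Omega;C([0,T];\mathbb R^{2d}))$, whence $\xi^{n,*}\in L^\infty_{\mathcal F}\subset L^2_{\mathcal F}$.

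The hardest step will be the localization that validates dropping the $dW$-integral. The integrand $[X^\mathrm{T}\,Y^\mathrm{T}]M^n[X;Y]$ is not a priori square-integrable, since $M^n$ lies only in $L^2_{\mathcal F}$ while $(X,Y)$ itself lies only in $L^2$. I would therefore stop at $\tau_k:=\inf\{s\ge t:|X(s)|^2+|Y(s)|^2\ge k\}\wedge T$, on which the state is bounded by $k$ and the stopped integral is a true martingale with vanishing conditional expectation. The real difficulty is the passage $k\to\infty$ in the boundary term $\mathbb E[\phi(\tau_k)\mid\mathcal F_t]$, where uniform integrability is not automatic. This is precisely where the threshold $n>n_0$ from \eqref{n_0} intervenes: the constants $\alpha,\beta,\lambda_{\min},\gamma_{\min},\gamma_{\max}$ in its definition should be read as the output of a Lyapunov/completion-of-squares a priori estimate forcing tight two-sided bounds on $P^n$ that are uniform in $n\ge n_0$. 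With such bounds $\phi(\tau_k)$ is dominated by $C\bigl(|X(\tau_k)|^2+|Y(\tau_k)|^2\bigr)\le C\sup_{s\in[t,T]}(|X(s)|^2+|Y(s)|^2)$, whose expectation is finite for $\xi\in L^2_{\mathcal F}$, so dominated convergence closes the verification.
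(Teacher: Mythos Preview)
Your verification outline---It\^o's formula applied to the quadratic form, localization by a stopping time on the state, and dominated convergence using $P^n\in L^\infty_{\mathcal F}(\Omega;C([0,T];\mathcal S^{2d}))$ together with $\sup_{s}(|X(s)|^2+|Y(s)|^2)\in L^1$ for any $\xi\in L^2_{\mathcal F}$---is correct and matches the paper's proof in structure. The paper does exactly this: it stops at $\tau_R=\inf\{s:|X^n(s)|\vee|Y^n(s)|\ge R\}$, invokes the HJB relation, and passes to the limit by dominated convergence after noting that $[X^\mathrm{T}\,Y^\mathrm{T}]P^n[X;Y]\in L^1_{\mathcal F}(\Omega;C([t,T];\mathbb R))$.

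The one point worth flagging is your reading of the threshold $n>n_0$. For a \emph{fixed} $n$, the localization step does not need uniform-in-$n$ bounds on $P^n$; the single bound $\|P^n\|_{L^\infty}<\infty$ (which comes from the existence theorem for any $n\ge\gamma_{\max}$) already dominates $\phi(\tau_k)$ as you describe. In the paper the condition $n>n_0$ enters the proof of Theorem~\ref{OPT-modified} only through the admissibility of $\xi^{n,*}$, which the paper establishes via the fine a priori estimates (Proposition~\ref{simple estimate} and its consequences, Propositions~\ref{prop-X} and~\ref{prop-admissible}); these estimates are what actually require $n>n_0$ and are set up there because they are indispensable for the later convergence Theorems~\ref{existence} and~\ref{OPT}. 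Your more elementary admissibility argument---bounded feedback coefficients from $\|P^n\|_\infty$, then Gronwall on the closed-loop linear ODE---is valid and in fact shows that, taken in isolation, this theorem does not need the threshold at all. So your proof is correct, but your diagnosis of where $n_0$ is doing work is off; the paper's route through the fine estimates buys the uniform-in-$n$ control needed downstream, not the present verification.
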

Intuitively, the solution to \eqref{control-problem} should be the limit of the solutions to \eqref{control-modified} as $n\rightarrow \infty$. The following two theorems show that this limit is well-defined and characterises the value function of our liquidation problem. The proofs are given in Section 3 below. 
\begin{theorem}\label{existence}
For any $t\in [0,T),$ the limit
$$Q(t):=\lim\limits_{n\rightarrow +\infty}Q^n(t)$$ exists and $Q^n(\cdot)$ converges compactly to $Q(\cdot)$ on $[0,T).$ Moreover,  there exists $M\in L_{\mathcal F}^2(0,T^-;\mathcal S^{2d})$ such that $(Q,M)$ solves the equation
\begin{equation}\label{limitequation} 
\begin{aligned}
&
\begin{aligned}
-dQ(t)=\Bigg\{&-Q(t)\begin{bmatrix}
-I_d\\
\gamma
\end{bmatrix}\Lambda^{-1}\begin{bmatrix}
-I_d&\gamma
\end{bmatrix}Q(t)+Q(t)\begin{bmatrix}
0&0\\
0&-\rho(t)
\end{bmatrix}+\begin{bmatrix}
0&0\\
0&-\rho(t)
\end{bmatrix}Q(t)\\
&+\begin{bmatrix}
\Sigma(t)&0\\
0&\gamma^{-1}\rho(t)+\rho(t)\gamma^{-1}
\end{bmatrix}
\Bigg\}\,dt-M(t)\,dW(t).
\end{aligned}
\end{aligned}
\end{equation}
on $[0,T).$ Furthermore, $$\liminf\limits_{t\rightarrow T}|Q(t)|=+\infty.$$
\end{theorem}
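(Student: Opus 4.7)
The plan is to combine monotonicity of $Q^n$ in $n$ with uniform-in-$n$ upper and lower bounds obtained from benchmark problems with uncorrelated assets, for which the Riccati system decouples into $d$ independent scalar equations of the type analysed in \cite{GH2017}. Since the terminal matrix $\begin{bmatrix} nI_d & I_d \\ I_d & \gamma^{-1}\end{bmatrix}$ is nondecreasing in $n$, the matrix-valued comparison principle stated in the appendix should yield $Q^n(t)\leq Q^{m}(t)$ a.s.\ whenever $n\leq m$. To produce a uniform-in-$n$ upper bound on each interval $[0,T']$ with $T'<T$, I would replace the data by diagonal dominating matrices (taking $\Sigma$ dominated by a diagonal matrix and using suitable extreme eigenvalues of $\Lambda,\gamma,\rho$) so that the resulting BSRDE decouples into $d$ scalar Riccati equations of the shape covered in \cite{GH2017}; these scalar equations admit uniform-in-$n$ bounds on compact subsets of $[0,T)$, and the corresponding block matrix $\overline Q$ then dominates $Q^n$ via the comparison principle. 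Monotonicity plus this upper bound gives pointwise convergence of $Q^n$ to a limit $Q\in L^\infty_{\mathcal F}(0,T^-;\mathcal S^{2d}_+)$.

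Next I would upgrade pointwise to compact convergence and construct the martingale integrand $M$. Compact convergence on $[0,T)$ should follow from a Dini-type argument once the limit is known to be a.s.\ continuous on $[0,T')$, the continuity itself being a by-product of the BSDE representation described below. Fix $T'<T$ and view \eqref{BSRDE'} on $[0,T']$ as a BSDE with bounded data and terminal value $Q^n(T')$, noting that the uniform upper bound confines all $Q^n$ to a common bounded set. Standard a priori estimates for matrix BSDEs then give uniform $L^2$-control of $M^n$, and Cauchy-in-$n$ estimates yield a limit $M\in L^2_{\mathcal F}(0,T';\mathcal S^{2d})$; the quadratic driver passes to the limit because it is locally Lipschitz in $Q$ on this bounded set. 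Patching across $T'\uparrow T$ produces $M\in L^2_{\mathcal F}(0,T^-;\mathcal S^{2d})$ and validates \eqref{limitequation} on $[0,T)$.

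The last and most delicate step is the singular terminal behaviour. Here I would construct a sub-solution $\underline Q^n$ by again diagonalising the data, but with constants chosen so that $\underline Q^n\leq Q^n$ in $\mathcal S^{2d}$. Each diagonal component of $\underline Q^n$ satisfies a scalar Riccati equation of the type in \cite{GH2017}, whose upper-left entry blows up like $1/(T-t)$ uniformly in $n$; hence the limit $\underline Q$ satisfies $L^\infty$-$\liminf_{t\to T}\underline Q_{ii}(t)=+\infty$ for each $i$. Since $|Q(t)|\geq |\underline Q_{ii}(t)|$, the required conclusion $\liminf_{t\to T}|Q(t)|=+\infty$ follows. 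The principal obstacle is that the quadratic term $-Q\begin{bmatrix}-I_d\\ \gamma\end{bmatrix}\Lambda^{-1}\begin{bmatrix}-I_d & \gamma\end{bmatrix}Q$ is not monotone in $Q\in\mathcal S^{2d}_+$ in any elementary way, so neither the upper- nor lower-benchmark comparison reduces to a pointwise calculation: both rely crucially on the new matrix-valued comparison principle in the appendix, whose hypotheses must be verified for this non-monotone driver. Once that comparison is available, both benchmark steps become direct applications of the one-dimensional estimates in \cite{GH2017}.
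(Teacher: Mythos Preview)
Your proposal is correct and follows essentially the same architecture as the paper: monotonicity of $Q^n$ via the matrix comparison principle, uniform bounds on compacts from diagonal benchmark systems (Lemma~\ref{rough estimate}), Cauchy estimates for $(Q^n,M^n)$ on $[0,T']$ via It\^o applied to $|Q^n-Q^m|^2$, and Dini for compact convergence. The one place where the paper is more economical is the singular terminal behaviour: rather than invoking a diagonal sub-solution whose upper-left block blows up, the paper simply uses $Q(t)\geq Q^n(t)$ together with the continuity of $Q^n$ up to $T$, giving
\[
\liminf_{t\to T}|Q(t)|\geq \liminf_{t\to T}|Q(t)|_{2,2}\geq \liminf_{t\to T}|Q^n(t)|_{2,2}=|Q^n(T)|_{2,2}>n
\]
for every $n$, which immediately yields the divergence; your sub-solution route also works but is unnecessary here.
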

By Theorem \ref{existence} we also obtain the existence of the limit of the optimal strategies as $n \to \infty:$
$$\xi^*(t,x,y):=\lim\limits_{n\rightarrow \infty}\xi^{n,*}(t,x,y)=-\Lambda^{-1}\begin{bmatrix}
-I_d&\gamma
\end{bmatrix}Q(t)\begin{bmatrix}
\ x\ \\
y 
\end{bmatrix}.$$
This allows us to state the main result of this paper. 
\begin{theorem}\label{OPT}
Let $Q$ be the limit given in Theorem \ref{existence} and put
$P=Q-\begin{bmatrix}
0&0\\
0&\gamma^{-1}
\end{bmatrix}.$
Then the value function \eqref{control-problem} is given by 
\begin{equation} 
\begin{split}
V(t,x,y) &=\begin{bmatrix}
 x^\mathrm{T}&y^\mathrm{T}
\end{bmatrix}P(t)\begin{bmatrix}
\ x\ \\
y 
\end{bmatrix}
\end{split}
\end{equation}
 and  the optimal control is given by 
 \begin{equation}
 \xi^*(t,x,y) =-\Lambda^{-1}\begin{bmatrix}
-I_d&\gamma
\end{bmatrix}Q(t)\begin{bmatrix}
\ x\ \\
y 
\end{bmatrix}.
\end{equation}
\end{theorem}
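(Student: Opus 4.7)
The plan is a two-sided verification argument: establish the lower bound $V(t,x,y)\geq [x^\mathrm{T},y^\mathrm{T}]P(t)[x;y]$ directly from the approximating value functions $V^n$, and then construct an admissible liquidation strategy $\xi^\ast$ that achieves this bound by passing to the limit in the feedback control of the unconstrained problems.

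For the lower bound, I would exploit the fact that the liquidation constraint $X(T)=0$ kills the penalisation term in \eqref{control-modified}. Fix $(t,x,y)$ and any $\xi\in\mathcal A(t,x)$; then along this strategy
\[
nX(T)^\mathrm{T}X(T)+2Y(T)^\mathrm{T}X(T)=0,
\]
so the unconstrained criterion at $\xi$ coincides with the liquidation cost $J(t,x,y;\xi)$ appearing in \eqref{control-problem}. Theorem~\ref{OPT-modified} gives $[x^\mathrm{T},y^\mathrm{T}]P^n(t)[x;y]=V^n(t,x,y)\leq J(t,x,y;\xi)$ for each $n>n_0$, and Theorem~\ref{existence} yields $P^n(t)\to P(t)$ pointwise. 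Passing $n\to\infty$ and then taking the essential infimum over $\mathcal A(t,x)$ proves $[x^\mathrm{T},y^\mathrm{T}]P(t)[x;y]\leq V(t,x,y)$.

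For the upper bound I would consider the candidate feedback $\xi^\ast$ and the associated state process $(X^\ast,Y^\ast)$ obtained by substituting $\xi^\ast(s,X^\ast(s),Y^\ast(s))=-\Lambda^{-1}([-I_d,\gamma]Q(s))[X^\ast(s);Y^\ast(s)]$ into \eqref{state-dynamics2} on $[t,T)$. On each subinterval $[t,T']$ with $T'<T$ this is a linear SDE with bounded coefficients (since $Q\in L_{\mathcal F}^\infty(\Omega;C([0,T^-];\mathcal S_+^{2d}))$ by Theorem~\ref{existence}), so $(X^\ast,Y^\ast)$ exists and $\xi^\ast\in L^2_{\mathcal F}(t,T^-;\mathbb R^d)$. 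Writing $P=Q-\begin{bmatrix}0&0\\0&\gamma^{-1}\end{bmatrix}$, the BSRDE~\eqref{BSRDE} together with the feedback \eqref{feedback-form} implies, via Itô's formula applied to $[X^{\ast\mathrm T},Y^{\ast\mathrm T}]P(s)[X^\ast;Y^\ast]$ on $[t,T']$, the identity
\[
[x^\mathrm{T},y^\mathrm{T}]P(t)[x;y] =\mathbb E\Bigl[\,[X^\ast(T')^\mathrm{T},Y^\ast(T')^\mathrm{T}]P(T')[X^\ast(T');Y^\ast(T')]+\int_t^{T'}\!\!\mathcal C(s,X^\ast,Y^\ast,\xi^\ast)\,ds\,\Big|\mathcal F_t\Bigr],
\]
where $\mathcal C$ is the running cost in \eqref{control-problem}; the stochastic integral vanishes in conditional expectation after a standard localisation that uses the $L^\infty$-bound on $Q$ and the moment bounds on $(X^\ast,Y^\ast)$ on $[t,T']$. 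The key step is then to show, as $T'\uparrow T$, that (i) the boundary term on the right is non-negative and tends to zero, which requires $X^\ast(T)=0$, and (ii) the integral is monotone convergent to $J(t,x,y;\xi^\ast)$ with $\xi^\ast\in L^2_{\mathcal F}(t,T;\mathbb R^d)$ so that $\xi^\ast\in\mathcal A(t,x)$.

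The main obstacle is exactly (i)–(ii): extracting from the compact convergence $Q^n\to Q$ and the singular behaviour $\liminf_{t\to T}|Q(t)|=\infty$ of Theorem~\ref{existence} the fact that the limit trajectory $X^\ast$ reaches zero at $T$, and that the limit control is genuinely square-integrable on the whole interval $[t,T]$. My plan here is to exploit the fine a priori estimate alluded to in the abstract: using the bounds that sandwich $Q^n$ between the uncorrelated benchmark models, one obtains a quantitative lower bound on the bottom-right block of $Q^n$ (and hence of $Q$) that forces $X^{n,\ast}(T)\to 0$ in $L^2$ with a uniform rate in $n$. Combining this with the feedback convergence $\xi^{n,\ast}\to\xi^\ast$ in $L^2_{\mathcal F}(t,T;\mathbb R^d)$ and standard stability for linear ODEs gives $X^{n,\ast}\to X^\ast$ in $L_{\mathcal F}^2(\Omega;C([t,T];\mathbb R^d))$, so $X^\ast(T)=\lim_n X^{n,\ast}(T)=0$ and $\xi^\ast\in\mathcal A(t,x)$. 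Together with the Itô identity this yields $J(t,x,y;\xi^\ast)=[x^\mathrm{T},y^\mathrm{T}]P(t)[x;y]$, completing the verification and simultaneously identifying the optimal control in the stated feedback form.
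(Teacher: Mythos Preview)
Your lower bound argument coincides with what the paper uses (implicitly): any liquidation strategy has zero terminal penalty in \eqref{control-modified}, so $V^n\leq J(\xi)$ and passing to the limit gives $[x^\mathrm T,y^\mathrm T]P(t)[x;y]\leq V(t,x,y)$.

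For the upper bound, however, the paper takes a genuinely different route from your direct It\^o verification on the limit $(P,X^\ast,Y^\ast)$. The paper never applies It\^o's formula to the singular process $P$; instead it passes to the limit in the \emph{penalised} identity $V^n(t,x,y)=J^n(t,x,y;\xi^{n,\ast})$. The fine a~priori estimates are used to show that $\xi^{n,\ast}$, $X^{n,\ast}$, $Y^{n,\ast}$ are uniformly bounded in $L^\infty$ on $[t,T]$ (Propositions~\ref{prop-X} and~\ref{prop-admissible}) and that $n|X^{n,\ast}(T)|^2\to 0$. Dominated convergence then gives $\lim_n V^n(t,x,y)=J(t,x,y;\xi^\ast)$ directly, without ever confronting a singular boundary term.

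Your route is viable in principle but two points need more care than you indicate. First, the boundary term $[X^\ast(T')^\mathrm T,Y^\ast(T')^\mathrm T]P(T')[X^\ast(T');Y^\ast(T')]$ is \emph{not} obviously nonnegative: $P=Q-\begin{bmatrix}0&0\\0&\gamma^{-1}\end{bmatrix}$ is not positive semidefinite, and since the $A$-block of $Q$ blows up like $(T-T')^{-1}$, knowing $X^\ast(T)=0$ alone is insufficient; you need the precise rate $|X^\ast(T')|\leq C(T-T')$ (which the paper obtains via the fundamental solution $\Phi$ and the bound $q$ on $\sqrt{\Lambda^{-1}}F\sqrt{\Lambda^{-1}}$, not via a ``bottom-right block'' estimate). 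Second, the running cost $\frac12\xi^\mathrm T\Lambda\xi+Y^\mathrm T\xi+\frac12 X^\mathrm T\Sigma X$ is not sign-definite because of the cross term $Y^\mathrm T\xi$, so the integral is not monotone in $T'$ and monotone convergence does not apply; you would again need the $L^\infty$ bound on $\xi^\ast$ and dominated convergence. In short, both approaches ultimately rest on the same fine estimate (Proposition~\ref{simple estimate}), but the paper's limit-in-the-cost argument sidesteps the delicate $0\cdot\infty$ boundary analysis that your It\^o route would require.
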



\section{Proofs}\label{Proofs of the main results}
In this section, we give the proofs of the Theorems \ref{OPT-modified}-\ref{OPT}. 
In a first step, we bound (with respect to the partial order on the cone of positive semi-definite matrices) the processes $Q^n$ by a matrix-valued processes whose limiting behaviour at the terminal  time can be inferred from a one-dimensional benchmark model (Lemma \ref{rough estimate}). This will enable us to prove the existence of  the limit $\lim\limits_{n\rightarrow\infty}Q^n$ (Theorem \ref{existence}). In a second step, we establish upper and lower bounds for $$\sqrt{\Lambda^{-1}}\begin{bmatrix}
-I_d&\gamma
\end{bmatrix}Q^n\begin{bmatrix}
-I_d\\
\gamma
\end{bmatrix}\sqrt{\Lambda^{-1}}$$ near the terminal time (Proposition \ref{simple estimate}), from which we will infer the convergence of the strategies $\{\xi^{n,*}\}$ to an admissible liquidation strategy. 

{\bf{Notation.}}
The following notion will be useful. For a generic matrix $Q\in S^{2d}$, we write
\begin{equation} \label{def-F}
Q_{2d\times 2d}=\begin{bmatrix}
A_{d\times d}&B_{d\times d}\\
B^\mathrm{T}_{d\times d}&C_{d\times d}\\
\end{bmatrix},
\end{equation}
and set $D:=(A- \gamma B^\mathrm T)$, $E:=(\gamma C-B)$ and $F:=D+E\gamma$.


\subsection{A priori estimates}\label{A Priori Estimates}
If $d=1,$ then $Q=\begin{bmatrix} A & B\\ B & C\\ \end{bmatrix}$ and the system \eqref{BSRDE'} simplifies to the three-dimensional BSRDE: 
\begin{equation}  \label{3dBSDE} 
\left\{
\begin{aligned}
-dA(t)&=\left\{\sigma(t)-\lambda^{-1} (A(t)-\gamma B(t))^2\right\}dt-M^A(t)\,dW(t)\\
-dB(t)&=\left\{-\rho(t) B(t)+\lambda^{-1}(\gamma C(t)-B(t))(A(t)-\gamma B(t))\right\}dt-M^B(t)\,dW(t)\\
-dC(t)&= \left\{-2\rho(t) C(t)+2\rho(t)\gamma^{-1}-\lambda^{-1}(\gamma C(t)-B(t))^2\right\}dt-M^C(t)\,dW(t)\\
A(T)&=n, B(T)=1, C(T)=\gamma^{-1}.
\end{aligned}
\right.
\end{equation}
Analogous to the a priori estimates in \cite{GH2017},  we have the following bounds on $[0,T]$:
\begin{equation*}
\begin{aligned}
	\underline D(t):=\frac{\gamma}{e^{\lambda^{-1}\gamma (T-t)}(1+\frac{\gamma}{n-\gamma})-1}&\leq D(t)\leq \lambda\kappa \coth\left(\kappa(T-t)+\textnormal{arccoth}\frac{\lambda^{-1}(n-\gamma)}{\kappa}\right)=:\overline D(t), \\
	\underline B(t):=e^{-\|\rho\|_{L^\infty}(T-t)}&\leq B(t)\leq 1,\\
	0&\leq E(t)\leq 1,\\
	0&\leq C(t)\leq \gamma^{-1},
\end{aligned}
\end{equation*}
where $\kappa:=\sqrt{2\lambda^{-1}\max\{\|\sigma\|_{L^\infty},\gamma\|\rho\|_{L^\infty}\}}.$ Thus, 
\begin{equation} \label{priori}
\begin{aligned}
	\frac{\gamma}{e^{\lambda^{-1}\gamma (T-t)}(1+\frac{\gamma}{n-\gamma})-1}&\leq A(t), F(t)\leq \lambda \kappa \coth\left(\kappa(T-t)+\textnormal{arccoth}\frac{\lambda^{-1}(n-\gamma)}{\kappa}\right)+\gamma, \\
	\gamma^{-1}e^{-\|\rho\|_{L^\infty}(T-t)}&\leq C(t)\leq  \gamma^{-1}.
\end{aligned}
\end{equation}

\subsubsection{A first (rough) estimate}

If $\Lambda, \Sigma$ were diagonal matrices, the  BSRDE system \eqref{BSRDE'} would separate into $d$ subsystems, which are similar to the three-dimensional system \eqref{3dBSDE}. Our idea is thus to first bound $(\Lambda, \Sigma)$ from above and below by diagonal matrices $(\lambda_{\max} I_d, |\Sigma(t)| I_d)$ and $ (\lambda_{\min} I_d, 0)$, respectively, and then to prove that the solutions to the resulting BSRDEs provide upper and lower bounds for the processes $Q^n$. 

More precisely, we consider the following BSRDEs:
 \begin{equation}  \label{BSRDEmax}
\begin{aligned}
-dQ(t)=&\Bigg\{-Q(t)\begin{bmatrix}
-I_d\\
\gamma
\end{bmatrix}\lambda^{-1}_{\max}I_d\begin{bmatrix}
-I_d&\gamma
\end{bmatrix}Q(t)+Q(t)\begin{bmatrix}
0&0\\
0&-\rho(t)
\end{bmatrix}+\begin{bmatrix}
0&0\\
0&-\rho(t)
\end{bmatrix}Q(t)\\
&\quad+\begin{bmatrix}
|\Sigma(t)| I_d&0\\
0&\gamma^{-1}\rho(t)+\rho(t)\gamma^{-1}
\end{bmatrix}
\Bigg\}\,dt-M(t)\,dW_t,
\\
Q(T)=&\begin{bmatrix}
nI_d&I_d\\
I_d&\gamma^{-1}
\end{bmatrix}
\end{aligned}
\end{equation}
and
\begin{equation}\label{BSRDEmin}  
\begin{aligned}
-dQ(t)=&\Bigg\{-Q(t)\begin{bmatrix}
-I_d\\
\gamma
\end{bmatrix}\lambda_{\min}^{-1}I_d\begin{bmatrix}
-I_d&\gamma
\end{bmatrix}Q(t)+Q(t)\begin{bmatrix}
0&0\\
0&-\rho(t)
\end{bmatrix}+\begin{bmatrix}
0&0\\
0&-\rho(t)
\end{bmatrix}Q(t)\\
&\quad+\begin{bmatrix}
0&0\\
0&\gamma^{-1}\rho(t)+\rho(t)\gamma^{-1}
\end{bmatrix}
\Bigg\}\,dt-M(t)\,dW_t,
\\
Q(T)=&\begin{bmatrix}
nI_d&I_d\\
I_d&\gamma^{-1}
\end{bmatrix}.
\end{aligned}
\end{equation}
Their solutions are denoted by $(Q^n_{\max},M^n_{\max})$ and $(Q^n_{\min},M^n_{\min}),$ respectively. The matrices  $Q^n_{\max}, Q^n_{\min}$ are of the form $$\begin{bmatrix}
A_1& & &B_1& & \\
 &\ddots & & &\ddots &\\
 &  &A_d &  & &B_d\\
 B_1& & & C_1& & \\
 &\ddots & & &\ddots &\\
 &  &B_d &  & &C_d\\
\end{bmatrix},$$ where each triple $(A_i, B_i, C_i)$ solves the BSRDE \eqref{3dBSDE} if $(\lambda, \gamma, \sigma,\rho)$ is replaced by $(\lambda_{\max}, \gamma_i, |\Sigma| ,\rho_i)$ and $(\lambda_{\min}, \gamma_i, 0,\rho_i),$  respectively. Moreover, from our comparison theorem given in the appendix [Theorem \ref{comparison}], we conclude that  the processes $Q^n$, $Q^n_{\max}$ and $Q^n_{\min}$ are nondecreasing in $n$ and
\begin{equation}\label{bounds}
Q^n_{\min}(t)\leq Q^n(t)\leq Q^n_{\max}(t)\quad \text{for all } t\in [0,T].
\end{equation}
Combing this inequality with the a priori estimates \eqref{priori}, we obtain the following result.

\begin{lemma}\label{rough estimate}
For every $n\geq\gamma_{max},$ the following a priori estimates hold for all $t\in[0,T]$:
\begin{equation} \label{generalestimate}
\begin{aligned}
	&\textup{diag}(\underline A^n_i)\leq A^n_{\min,i}\leq A^n\leq A^n_{\max}\leq \textup{diag}(\overline A_i), \\
	&\textup{diag}(\underline C^n_i)\leq C^n_{\min,i}\leq C^n\leq C^n_{\max}\leq  \gamma^{-1},\\
	&\textup{diag}(\underline F^n_i)\leq F^n_{\min,i}\leq F^n\leq  F^n_{\max}\leq \textup{diag}(\overline F_i).
\end{aligned}
\end{equation}
and $B^n_{\max}\leq I_d$
where 
\begin{equation*} 
\begin{aligned}
	&\underline C^n_i=\gamma_i^{-1}e^{-\|\rho_i\|_{L^\infty}(T-t)},\\
	&\underline A^n_i=\underline F^n_i=\frac{\gamma_i}{e^{\lambda_{\min}^{-1}\gamma_i (T-t)}(1+\frac{\gamma_i}{n-\gamma_i})-1},\\
	&\overline A_i=\overline F_i=\lambda_{\max}\kappa_i \coth\left(\kappa_i(T-t)\right)+\gamma_i, \\
	&\kappa_i=\sqrt{2\lambda_{\max}^{-1}\max\{\|\Sigma\|_{L^\infty},\gamma_i\|\rho_i\|_{L^\infty}\}}.
\end{aligned}
\end{equation*}
\end{lemma}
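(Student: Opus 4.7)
The plan rests on three ingredients already in place: the comparison theorem for matrix-valued BSRDEs (Theorem \ref{comparison}) from the appendix, the observation that the bounding equations \eqref{BSRDEmax} and \eqref{BSRDEmin} decouple into $d$ independent three-dimensional Riccati systems of the form \eqref{3dBSDE}, and the one-dimensional a priori estimates \eqref{priori}.

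First I would invoke the comparison step. Since $\lambda_{\min}I_d \le \Lambda \le \lambda_{\max}I_d$ and $0 \le \Sigma(t) \le |\Sigma(t)|I_d$, and since the terminal conditions and the linear-in-$Q$ terms of \eqref{BSRDE'}, \eqref{BSRDEmax} and \eqref{BSRDEmin} agree, Theorem \ref{comparison} yields $Q^n_{\min}(t) \le Q^n(t) \le Q^n_{\max}(t)$ on $[0,T]$, which is precisely \eqref{bounds}. The same comparison applied to the terminal conditions, which are non-decreasing in $n$, shows that $Q^n$, $Q^n_{\max}$ and $Q^n_{\min}$ are all non-decreasing in $n$.

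Second, since $\gamma$, $\rho$ and the replacement matrices $|\Sigma|I_d$ and $0$ are all diagonal, the generator of \eqref{BSRDEmax} preserves the subclass of $\mathcal S^{2d}$-valued processes whose four $d\times d$ blocks are themselves diagonal. Substituting the ansatz in which the $A$, $B$ and $C$ blocks are simultaneously diagonal with entries $A_i$, $B_i$, $C_i$ produces, index by index, the three-dimensional BSRDE \eqref{3dBSDE} with $(\lambda,\gamma,\sigma,\rho)$ replaced by $(\lambda_{\max},\gamma_i,|\Sigma|,\rho_i)$; uniqueness of solutions to \eqref{BSRDEmax} then identifies this ansatz with $Q^n_{\max}$. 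The analogous construction with $(\lambda_{\min},\gamma_i,0,\rho_i)$ identifies $Q^n_{\min}$.

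Finally I would feed each decoupled triple into the one-dimensional estimates \eqref{priori}. This yields directly, in each component $i$, the bounds $\underline A^n_i \le A_i \le \overline A_i$, $\gamma_i^{-1}e^{-\|\rho_i\|_{L^\infty}(T-t)} \le C_i \le \gamma_i^{-1}$, $B_i \le 1$, and $\underline F^n_i \le F_i \le \overline F_i$; assembling these componentwise into diagonal matrices and composing with the matrix sandwich from the first step delivers every inequality claimed in the lemma. The step requiring genuine care is the first one: Theorem \ref{comparison} is formulated for a specific class of drivers, and one must verify that the three Riccati drivers in \eqref{BSRDE'}, \eqref{BSRDEmax} and \eqref{BSRDEmin} actually meet its hypotheses, in particular that the difference of the quadratic terms $Q\begin{bmatrix}-I_d\\\gamma\end{bmatrix}\Lambda^{-1}\begin{bmatrix}-I_d&\gamma\end{bmatrix}Q$ for the three choices of $\Lambda$ fits into the sign-preserving structure the comparison principle assumes. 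Once that check is in hand, the remainder of the argument is routine bookkeeping.
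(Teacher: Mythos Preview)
Your proposal is correct and follows essentially the same route as the paper: invoke the comparison principle (Theorem~\ref{comparison}) to sandwich $Q^n$ between $Q^n_{\min}$ and $Q^n_{\max}$, observe that the bounding systems decouple into $d$ copies of the three-dimensional BSRDE~\eqref{3dBSDE}, and then read off the block inequalities from the one-dimensional estimates~\eqref{priori}. The care point you flag is real but benign: with $H=\begin{bmatrix}-I_d\\\gamma\end{bmatrix}\Lambda^{-1}\begin{bmatrix}-I_d&\gamma\end{bmatrix}$ one has $0\le H_{\max}\le H_{\mathrm{orig}}\le H_{\min}$ from $\lambda_{\max}^{-1}I_d\le\Lambda^{-1}\le\lambda_{\min}^{-1}I_d$, and the zeroth-order terms are ordered by $0\le\Sigma\le|\Sigma|I_d$, so Theorem~\ref{comparison} applies directly.
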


\subsubsection{A second (finer) estimate}

We are now going to bound the processes $$\sqrt{\Lambda^{-1}}\begin{bmatrix}
-I_d&\gamma
\end{bmatrix}Q^n\begin{bmatrix}
-I_d\\
\gamma
\end{bmatrix}\sqrt{\Lambda^{-1}}=\sqrt{\Lambda^{-1}}F^n\sqrt{\Lambda^{-1}}.$$ 
Multiplying $\begin{bmatrix}
-I_d&\gamma
\end{bmatrix}$ on the left and $\begin{bmatrix}
-I_d\\
\gamma
\end{bmatrix}$ on the right in \eqref{BSRDE'},  we see that $F^n$ satisfies
\begin{equation} \label{BSRDE_Fn}
\begin{aligned}
-dF^n(t)=&\Bigg\{\begin{bmatrix}
-I_d&\gamma
\end{bmatrix}\left(Q^n(t)\begin{bmatrix}
0&0\\
0&-\rho(t)
\end{bmatrix}+\begin{bmatrix}
0&0\\
0&-\rho(t)
\end{bmatrix}Q^n(t)\right)\begin{bmatrix}
-I_d\\
\gamma
\end{bmatrix}\\
&\quad-F^n(t)\Lambda^{-1} F^n(t)+\Sigma(t)+2\gamma\rho\Bigg\}\,dt-\begin{bmatrix}
-I_d&\gamma
\end{bmatrix}M^n(t)\begin{bmatrix}
-I_d\\
\gamma
\end{bmatrix}\,dW(t),
\\
F^n(T)=&nI_d-\gamma.
\end{aligned}
\end{equation}
Our goal is to bound the processes $F^n$ by the solutions to deterministic RDEs. To this end, we first prove that the process
\begin{equation*}
\begin{bmatrix}
-I_d&\gamma
\end{bmatrix}\left(Q^n\begin{bmatrix}
0&0\\
0&-\rho
\end{bmatrix}+\begin{bmatrix}
0&0\\
0&-\rho
\end{bmatrix}Q^n\right)\begin{bmatrix}
-I_d\\
\gamma
\end{bmatrix}
\end{equation*}
can be bounded from below and above by $-2F^n$ and $2F^n,$ respectively.
\begin{lemma}\label{keyestimate}
Let $\beta, n_0$ be as in \eqref{n_0} and put
\begin{equation}\label{T_0}
T_0:=\max_{i} \left\{T-\frac{\lambda_{\min}}{\gamma_i(\frac{1}{2}+\beta)}\frac{n_0-(\beta+1) \gamma_i}{n_0-\frac{\gamma_i}{2}} \right\}\vee 0.
\end{equation}
For our choice of $n_0$, we have $T_0< T.$  Then, for any $n\geq n_0$, 
\begin{equation}\label{keyinequality}
\begin{aligned}
-2F^n \leq \begin{bmatrix}
-I_d&\gamma
\end{bmatrix}\left(Q^n\begin{bmatrix}
0&0\\
0&-\rho
\end{bmatrix}+\begin{bmatrix}
0&0\\
0&-\rho
\end{bmatrix}Q^n\right)\begin{bmatrix}
-I_d\\
\gamma
\end{bmatrix}\leq 2F^n,\quad t\in[T_0,T].
\end{aligned}
\end{equation}
\end{lemma}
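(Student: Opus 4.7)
My approach is to rewrite the matrix $R^n$ on the left-hand side in a form amenable to standard matrix inequalities, and then dominate it by $2F^n$ using the a priori bounds of Lemma \ref{rough estimate}. Concretely, there are three steps: (i) express $R^n$ as a symmetrised product, (ii) apply a matrix Young inequality to bound $\pm R^n$ in terms of simpler quantities, and (iii) use the bounds on $A^n, B^n, C^n$ from Lemma \ref{rough estimate} to compare with the lower bound on $F^n$ and identify the time $T_0$ beyond which the target inequality holds.

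For step (i), writing $Q^n$ in block form and $\begin{bmatrix}0 & 0\\0 & -\rho\end{bmatrix} = -LL^\mathrm{T}$ with $L = \begin{bmatrix}0\\ \sqrt{\rho}\end{bmatrix}$, a direct computation (exploiting that $\gamma$ and $\rho$ are diagonal and hence commute) yields
$$R^n = -\bigl(E^n\rho\gamma + \gamma\rho(E^n)^\mathrm{T}\bigr) = -(XY^\mathrm{T} + YX^\mathrm{T}),$$
with $X := E^n\sqrt{\rho}$, $Y := \gamma\sqrt{\rho}$, and $E^n = \gamma C^n - B^n$ as in \eqref{def-F}. For step (ii), the elementary inequality $(\sqrt{\delta}\, X \pm Y/\sqrt{\delta})(\sqrt{\delta}\, X \pm Y/\sqrt{\delta})^\mathrm{T} \geq 0$ gives, for every $\delta > 0$,
$$-\bigl(\delta\, E^n\rho (E^n)^\mathrm{T} + \delta^{-1}\gamma^2\rho\bigr) \;\leq\; R^n \;\leq\; \delta\, E^n\rho (E^n)^\mathrm{T} + \delta^{-1}\gamma^2\rho$$
in the partial order on $\mathcal S^d$. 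For step (iii), I use $C^n \leq \gamma^{-1}$ (so $|\gamma C^n|_{2,2} \leq \gamma_{\max}/\gamma_{\min}$) together with the positive semi-definiteness of $Q^n$, which yields the block Cauchy--Schwarz inequality $|v^\mathrm{T}B^n w|^2 \leq (v^\mathrm{T}A^n v)(w^\mathrm{T}C^n w)$ and hence $|B^n|_{2,2}^2 \leq |A^n|_{2,2}|C^n|_{2,2} \leq (\max_i \overline A_i(t))\gamma_{\min}^{-1}$. Combined with $\rho \leq \|\rho\|_{L^\infty}I_d$ and $\gamma \leq \gamma_{\max}I_d$, this gives an explicit upper bound on $|E^n|_{2,2}$, and after optimising $\delta$ the target $-2F^n \leq R^n \leq 2F^n$ reduces to a componentwise condition of the form $\underline F^n_i(t) \geq \phi_i(t,n)$ with $\phi_i$ explicit. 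The threshold $T_0$ in \eqref{T_0} is then the largest $t$ at which this condition is satisfied for $n=n_0$, and monotonicity of $\underline F^n_i(t)$ in both $t \to T$ and $n$ extends the inequality to all $(t,n) \in [T_0,T]\times[n_0,\infty)$. The condition $n_0 \geq (\beta+1)\gamma_{\max}+1$ in \eqref{n_0} makes the numerators in \eqref{T_0} positive, so $T_0 < T$.

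The principal obstacle is that both $|E^n|_{2,2}$ (through $|A^n|_{2,2}$, hence through $\overline A_i$) and the lower bound $\underline F^n_i$ can diverge at rate $(T-t)^{-1}$ as $t \to T$, so the target inequality is a comparison of two blow-ups and the constants must be tracked to leading order. The careful balancing of the leading coefficients---encoded in the choice of $\beta, n_0$ and in the explicit formula for $T_0$---is the technical heart of the argument. A secondary subtlety is that $B^n$ is not generally symmetric, so its magnitude must be controlled through block Cauchy--Schwarz rather than a direct matrix-order comparison against the benchmark $B^n_{\max}$.
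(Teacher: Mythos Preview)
Your decomposition $R^n=-(E^n\rho\gamma+\gamma\rho(E^n)^{\mathrm T})$ and the matrix Young inequality are correct, and the resulting strategy does prove \emph{some} version of the lemma: there is a threshold $T_0'<T$ beyond which $\pm R^n\le 2F^n$. The gap is that your approach cannot recover the specific $T_0$ of \eqref{T_0} with $\beta=3+2\|\rho\|_{L^\infty}^2$, and your sentence ``the threshold $T_0$ in \eqref{T_0} is then the largest $t$ at which this condition is satisfied'' is unsubstantiated. Your bound on $|E^n|_{2,2}$ passes through $|B^n|_{2,2}\le\sqrt{|A^n|_{2,2}|C^n|_{2,2}}$, and $|A^n|_{2,2}\le\max_i\overline A_i(t)$ depends on $\lambda_{\max}$ and the $\kappa_i$; these parameters are absent from \eqref{T_0}, so the $\phi_i$ you obtain cannot reduce to the stated formula.

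The paper avoids estimating $B^n$ altogether. It observes that for any symmetric $S$ the quadratic form $[-I_d,\ S\gamma]\,Q^n\,[-I_d,\ S\gamma]^{\mathrm T}\ge 0$ (by positive semidefiniteness of $Q^n$), and applies this with $S=2I_d\pm\rho$. This absorbs the $B^n$ contributions entirely into a nonnegative term, leaving
\[
2F^n\mp R^n\;\ge\;A^n-2\rho\gamma C^n\gamma\rho-3\gamma C^n\gamma\;\ge\;A^n-(3+2\|\rho\|_{L^\infty}^2)\gamma,
\]
where the last step uses only $\gamma C^n\gamma\le\gamma$. The remaining condition $\operatorname{diag}(\underline A_i^n)\ge\beta\gamma$ involves only $\lambda_{\min}$, $\gamma_i$, $n$ and $\beta$, and yields exactly \eqref{T_0}. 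In short: your Young-inequality route proves the qualitative statement but with worse, structurally different constants; the paper's choice of test vector is the device that produces the clean $\beta$ and the stated $T_0$.
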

\begin{proof}
Using the matrix decomposition introduced prior to Section \ref{A Priori Estimates},
we need to prove that
\begin{equation*}
\begin{aligned}
-2F^n \leq -\gamma \rho (B^n)^\mathrm T-B^n\rho\gamma+\gamma C^n\gamma\rho+\rho\gamma C^n\gamma\leq 2F^n,\quad t\in[T_0,T].
\end{aligned}
\end{equation*}
Since $Q^n$ is nonnegative definite, 
\begin{equation*}
\begin{aligned}
&A^n-(2I_d-\rho)\gamma (B^n)^\mathrm T-B^n\gamma(2I_d-\rho)+(2I_d-\rho)\gamma C^n\gamma(2I_d-\rho)\\
=&\begin{bmatrix}
-I_d&(2I_d-\rho)
\end{bmatrix}Q^n(t)\begin{bmatrix}
-I_d\\
(2I_d-\rho)
\end{bmatrix}\geq 0.
\end{aligned}
\end{equation*}
In view of (\ref{def-F}) it follows that,
\begin{equation}\label{computation}
\begin{aligned}
&2F^n+\gamma \rho (B^n)^\mathrm T+B^n\rho\gamma-\gamma C^n\gamma\rho-\rho\gamma C^n\gamma\\
=&[A^n-(2I_d-\rho)\gamma (B^n)^\mathrm T-B^n\gamma(2I_d-\rho)+(2I_d-\rho)\gamma C^n\gamma(2I_d-\rho)]\\
&+A^n-(2I_d-\rho)\gamma C^n\gamma(2I_d-\rho)+2\gamma C^n\gamma-\gamma C^n\gamma\rho-\rho\gamma C^n\gamma\\
\geq& A^n+(I_d+\rho)\gamma C^n\gamma(I_d+\rho)-2\rho\gamma C^n\gamma\rho-3\gamma C^n\gamma\\
\geq& A^n-2\rho\gamma C^n\gamma\rho-3\gamma C^n\gamma.
\end{aligned}
\end{equation}
For $n>\gamma_{\max}$, 
$$\underline A^n_i=\frac{\gamma_i}{e^{\lambda_{\min}^{-1}\gamma_i (T-t)}(1+\frac{\gamma_i}{n-\gamma_i})-1}\geq \frac{\lambda_{\min}}{T-t+\frac{\lambda_{\min}}{n-\gamma_i/2}}-\frac{\gamma_i}{2}.$$
Set
\begin{equation*}
 f(t,n)=\frac{\lambda_{\min}}{T-t+\frac{\lambda_{\min}}{n-\gamma_i/2}}-\frac{\gamma_i}{2}-\beta \gamma_i.
\end{equation*}
It is easy to check that
$$ f(T_0,n_0)\geq \frac{\lambda_{\min}}{\frac{\lambda_{\min}}{\gamma_i(\frac{1}{2}+\beta)}\frac{n_0-(\beta+1) \gamma_i}{n_0-\frac{\gamma_i}{2}} +\frac{\lambda_{\min}}{n_0-\gamma_i/2}}-\frac{\gamma_i}{2}-\beta \gamma_i=0.$$
Since $f$ is increasing in $t$ and $n,$ we have $f(t,n)\geq 0$ for $t\in[T_0,T], n\geq n_0, i=1,\cdots,d.$ Moreover, by Lemma \ref{rough estimate},
$$\gamma C^n\gamma\leq \gamma, \quad \textit{and}\quad A^n\geq diag(\underline A^n_i).$$
Therefore, 
\begin{equation}\label{positive}
A^n-2\rho\gamma C^n\gamma\rho-3\gamma C^n\gamma\geq diag(\underline A^n_i)-\beta \gamma\geq 0.
\end{equation}
This yields the right inequality in \eqref{keyinequality}. For the left inequality, notice that similarly to \eqref{computation}, 
\begin{equation*}
\begin{aligned}
&2F^n-\gamma \rho (B^n)^\mathrm T-B^n\rho\gamma+\gamma C^n\gamma\rho+\rho\gamma C^n\gamma\\
\geq& A^n+(I_d-\rho)\gamma C^n\gamma(I_d-\rho)-2\rho\gamma C^n\gamma\rho-3\gamma C^n\gamma.
\end{aligned}
\end{equation*}
Hence, the left inequality also follows from \eqref{positive}.
\end{proof}

From \cite[Section 2.2.2]{Kratz2011}, we have the following lemma.
\begin{lemma}\label{doubleK}
Let $n>n_1,$ where
\begin{equation*}
n_1:=\max\{\lambda_{\min}(\sqrt{1+\alpha}+1)+\gamma_{\min},\gamma_{\max}\}
\end{equation*} and let $T_0$ be as in equation \eqref{T_0}.
Then the initial value problems
\begin{equation}\label{K^max}
-dK(t)=-\{K(t)^2-2 K(t)-\alpha I_d\}\,dt,\quad K(T)=\frac{n-\gamma_{\min}}{\lambda_{\min}}I_d
\end{equation}
and
\begin{equation}\label{K^min}
-dK(t)=-\{K(t)^2+2 K(t)\}\,dt,\quad K(T)=\frac{n-\gamma_{\max}}{\lambda_{\max}}I_d
\end{equation}
with
\begin{equation}\label{alpha}
\quad \alpha=\frac{||\Sigma||_{L^{\infty}}+2\gamma_{\max}||\rho||_{L^{\infty}}}{\lambda_{\min}}
 \end{equation}
possess unique solutions $K^n_{\max}$ respectively $K^n_{\min}$ on $[T_0,T]$. They are given by
 \begin{equation*}
\begin{aligned}
&K^n_{\max}(t)=p^n(t)I_d,\\
&K^n_{\min}(t)=q^n(t)I_d,
\end{aligned}
\end{equation*}
where
 \begin{equation}\label{keybounds}
\begin{aligned}
&p^n(t)=\sqrt{1+\alpha } \coth (\sqrt{1+\alpha}(T-t)+\kappa^n_1)+1,\\
&q^n(t)=\coth(T-t+\kappa^n_2)-1,
\end{aligned}
\end{equation}
with
\begin{equation*}
\begin{aligned}
&\kappa^n_1=\textnormal{arcoth}(\frac{\frac{n-\gamma_{\min}}{\lambda_{\min}}-1}{\sqrt{1+\alpha}}),\\
&\kappa^n_2=\textnormal{arcoth}(\frac{n-\gamma_{\max}}{\lambda_{\max}}+1).
\end{aligned}
\end{equation*}
\end{lemma}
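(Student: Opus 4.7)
The plan is to exploit the scalar structure hidden in both initial value problems. Since every term on the right-hand side of \eqref{K^max} and \eqref{K^min}, together with the respective terminal condition, is a scalar multiple of $I_d$, the ansatz $K(t)=k(t)I_d$ is consistent with the ODE, and uniqueness of the matrix-valued solutions reduces to uniqueness of the scalar ones by Picard--Lindel\"of (the drifts are polynomial in $K$, hence locally Lipschitz). It therefore suffices to produce each scalar $k(\cdot)$ in closed form and verify the stated formulas.

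For \eqref{K^max}, the ansatz gives $\dot k=k^2-2k-\alpha$ with $k(T)=(n-\gamma_{\min})/\lambda_{\min}$. Completing the square as $k^2-2k-\alpha=(k-1)^2-(1+\alpha)$ and setting $u=k-1$ transforms the equation into $\dot u=u^2-(1+\alpha)$. Since $(\coth x)'=1-\coth^2 x$, a direct differentiation shows that $u(t)=\sqrt{1+\alpha}\,\coth(\sqrt{1+\alpha}(T-t)+\kappa)$ is a solution for any constant $\kappa$, and evaluating at $t=T$ pins down $\kappa=\kappa^n_1$ of the lemma. Exactly the same steps applied to \eqref{K^min}, now with $u=k+1$ turning $\dot k=k^2+2k$ into $\dot u=u^2-1$, yield $q^n(t)=\coth(T-t+\kappa^n_2)-1$ with the stated $\kappa^n_2$.

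What remains is to confirm that $\kappa^n_1,\kappa^n_2$ are well-defined and that the $\coth$-arguments stay strictly positive throughout $[T_0,T]$. Since $\textnormal{arcoth}$ is defined on $(-\infty,-1)\cup(1,\infty)$, both issues reduce to showing that its arguments exceed $1$. For $\kappa^n_2$ the hypothesis $n>\gamma_{\max}$ (built into $n>n_1$) gives $(n-\gamma_{\max})/\lambda_{\max}+1>1$. For $\kappa^n_1$ the inequality $n>n_1\geq\lambda_{\min}(\sqrt{1+\alpha}+1)+\gamma_{\min}$ unpacks precisely to $((n-\gamma_{\min})/\lambda_{\min}-1)/\sqrt{1+\alpha}>1$. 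Both constants are therefore finite and positive, so the $\coth$-arguments stay bounded below by $\kappa^n_i>0$ for every $t\leq T$ and no blow-up occurs on $[T_0,T]$ (indeed on $(-\infty,T]$). I anticipate no genuine obstacle here: modulo the sign book-keeping built into a backward ODE --- which is absorbed into the $T-t$ argument of $\coth$ in the standard way --- the lemma is a direct verification, which is why the authors simply refer to \cite{Kratz2011}.
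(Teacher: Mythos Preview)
Your proposal is correct and complete. The paper itself does not prove this lemma but simply cites \cite[Section 2.2.2]{Kratz2011}; your direct verification via the scalar ansatz $K(t)=k(t)I_d$, the substitutions $u=k\mp 1$ reducing both equations to $\dot u=u^2-c^2$, and the explicit $\coth$ solutions is exactly the standard argument one would expect behind that citation, and you correctly identify the role of the threshold $n_1$ in making $\kappa^n_1,\kappa^n_2$ well-defined and positive.
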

The matrices $K^n_{\max}, K^n_{\min}$ in Lemma \ref{doubleK} turn out to be the desired bounds for $\sqrt{\Lambda^{-1}}F^n\sqrt{\Lambda^{-1}}$ near the terminal time. 
\begin{proposition}\label{simple estimate}
Let $n_0$ be as in \eqref{n_0} and $T_0$ be as in \eqref{T_0}. 
Then for $n>n_0$, 
\begin{equation}\label{weightedbounds}
q^n(t) I_d\leq \sqrt{\Lambda^{-1}}F^n\sqrt{\Lambda^{-1}}\leq  p^n(t) I_d,\quad t\in[T_0,T].
\end{equation}
\end{proposition}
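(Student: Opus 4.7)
The plan is to introduce $G^n(t) := \sqrt{\Lambda^{-1}} F^n(t) \sqrt{\Lambda^{-1}}$ and derive a BSRDE for $G^n$ by conjugating \eqref{BSRDE_Fn} on both sides by $\sqrt{\Lambda^{-1}}$. The quadratic term $-F^n\Lambda^{-1}F^n$ becomes $-(G^n)^2$, the terminal value becomes $G^n(T)=\sqrt{\Lambda^{-1}}(nI_d-\gamma)\sqrt{\Lambda^{-1}}$, and the remaining drift decomposes as $R^n(t)+S^n(t)$, where $R^n$ is the $\sqrt{\Lambda^{-1}}$-conjugation of the $Q^n$–$\rho$ cross term and $S^n:=\sqrt{\Lambda^{-1}}(\Sigma(t)+2\gamma\rho(t))\sqrt{\Lambda^{-1}}$. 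Since conjugation by a fixed symmetric matrix preserves PSD inequalities, Lemma \ref{keyestimate} yields $-2G^n\leq R^n\leq 2G^n$ on $[T_0,T]$; and from $|\Sigma|_{2,2}\leq\|\Sigma\|_{L^\infty}$, $|\gamma\rho|_{2,2}\leq\gamma_{\max}\|\rho\|_{L^\infty}$ together with $\Lambda^{-1}\leq\lambda_{\min}^{-1}I_d$ we obtain $0\leq S^n\leq\alpha I_d$ with $\alpha$ as in \eqref{alpha}.

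Next, I would verify the terminal orderings needed to initiate the two comparisons. Since $\gamma_{\min}I_d\leq\gamma\leq\gamma_{\max}I_d$ and $\lambda_{\max}^{-1}I_d\leq\Lambda^{-1}\leq\lambda_{\min}^{-1}I_d$, a direct eigenvalue estimate gives
\begin{equation*}
\frac{n-\gamma_{\max}}{\lambda_{\max}}I_d \;\leq\; \sqrt{\Lambda^{-1}}(nI_d-\gamma)\sqrt{\Lambda^{-1}} \;\leq\; \frac{n-\gamma_{\min}}{\lambda_{\min}}I_d,
\end{equation*}
which is exactly $K^n_{\min}(T)\leq G^n(T)\leq K^n_{\max}(T)$. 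Combined with the PSD sandwich on $R^n$ and the bound on $S^n$, the BSRDE drift of $G^n$ is dominated above (on $[T_0,T]$) by $-X^2+2X+\alpha I_d$ evaluated at $X=G^n$ (the generator of \eqref{K^max}), and bounded below by $-X^2-2X$ (the generator of \eqref{K^min}).

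I would then conclude by invoking the multi-dimensional comparison principle for matrix BSRDEs stated in the appendix (Theorem \ref{comparison}). Viewing $K^n_{\max}=p^n I_d$ and $K^n_{\min}=q^n I_d$ as deterministic solutions of \emph{matrix} BSRDEs with vanishing martingale integrand, the comparison applied to the pair $(G^n,K^n_{\max})$ yields $G^n\leq p^n I_d$, and applied to $(K^n_{\min},G^n)$ yields $q^n I_d\leq G^n$, proving \eqref{weightedbounds}.

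The main obstacle is that the remainder term $R^n$ in the BSRDE for $G^n$ is not a function of $G^n$ itself but of the full matrix $Q^n$, so the two equations cannot be matched coefficient-by-coefficient. The resolution is precisely Lemma \ref{keyestimate}: the PSD sandwich $|R^n|\leq 2G^n$ provides a linear-in-$G^n$ majorant (and minorant), and it is exactly this majorization that the appendix comparison theorem is designed to exploit. A secondary subtlety is that $G^n$ and the comparison matrix $p^n I_d$ need not commute, but since $p^n I_d$ is a scalar multiple of the identity this issue disappears; all the needed quadratic identities $(G^n)^2-(p^n)^2 I_d$ linearize cleanly.
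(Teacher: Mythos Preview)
Your proposal is correct and follows essentially the same approach as the paper: conjugate the $F^n$-equation by $\sqrt{\Lambda^{-1}}$, use Lemma~\ref{keyestimate} to sandwich the $Q^n$--$\rho$ cross term by $\pm 2G^n$, bound $\sqrt{\Lambda^{-1}}(\Sigma+2\gamma\rho)\sqrt{\Lambda^{-1}}$ by $\alpha I_d$ and the terminal value as stated, and then apply Theorem~\ref{comparison} against the scalar ODEs \eqref{K^max}, \eqref{K^min}. Your observation that the non-autonomous term $R^n$ can be absorbed into the zero-order coefficient $I_1$ of Theorem~\ref{comparison} (by writing $R^n = G^n + G^n + (R^n - 2G^n)$ with $R^n - 2G^n \le 0$, and similarly for the lower bound) is exactly how the comparison principle is meant to be invoked here.
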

\begin{proof}
Let $$\hat F^n=\sqrt{\Lambda^{-1}} F^n\sqrt{\Lambda^{-1}}.$$ 
Multiplying $\sqrt{\Lambda^{-1}} $ both on the left and right of \eqref{BSRDE_Fn}, we see that $\hat F^n$ solves
\begin{equation*} 
\begin{aligned}
-d\hat F^n(t)=&\Bigg\{\sqrt{\Lambda^{-1}}\begin{bmatrix}
-I_d&\gamma
\end{bmatrix}\left(Q^n(t)\begin{bmatrix}
0&0\\
0&-\rho(t)
\end{bmatrix}+\begin{bmatrix}
0&0\\
0&-\rho(t)
\end{bmatrix}Q^n(t)\right)\begin{bmatrix}
-I_d\\
\gamma
\end{bmatrix}\sqrt{\Lambda^{-1}}\\
&\quad-\hat F^n(t)\cdot\hat F^n(t)+\sqrt{\Lambda^{-1}}(\Sigma(t)+2\gamma\rho)\sqrt{\Lambda^{-1}}\Bigg\}\,dt\\
&-\sqrt{\Lambda^{-1}}\begin{bmatrix}
-I_d&\gamma
\end{bmatrix}M^n(t)\begin{bmatrix}
-I_d\\
\gamma
\end{bmatrix}\sqrt{\Lambda^{-1}}\,dW(t),
\\
\hat F^n(T)=&\sqrt{\Lambda^{-1}}(nI_d-\gamma)\sqrt{\Lambda^{-1}}.
\end{aligned}
\end{equation*}
From Lemma \ref{keyestimate}, we know that on $[T_0,T],$ 
\begin{equation*}
\begin{aligned}
-2\hat F^n \leq \sqrt{\Lambda^{-1}}\begin{bmatrix}
-I_d&\gamma
\end{bmatrix}\left(Q^n(t)\begin{bmatrix}
0&0\\
0&-\rho(t)
\end{bmatrix}+\begin{bmatrix}
0&0\\
0&-\rho(t)
\end{bmatrix}Q^n(t)\right)\begin{bmatrix}
-I_d\\
\gamma
\end{bmatrix}\sqrt{\Lambda^{-1}}\leq 2\hat F^n.
\end{aligned}
\end{equation*}
In terms of $\alpha$ given in \eqref{alpha}, 
\begin{equation*}
\begin{aligned}
&0\leq\sqrt{\Lambda^{-1}}(\Sigma(t)+2\gamma\rho)\sqrt{\Lambda^{-1}}\leq \alpha I_d,\\
&\frac{n-\gamma_{\max}}{\lambda_{\max}} I_d\leq\sqrt{\Lambda^{-1}}(nI_d-\gamma)\sqrt{\Lambda^{-1}}\leq \frac{n-\gamma_{\min}}{\lambda_{\min}} I_d.
\end{aligned}
\end{equation*}
Applying the Comparison Principle [Theorem \ref{comparison}], we obtain 
$$K^n_{\min}(t)\leq\hat F^n(t)\leq K^n_{\max}(t),\quad t\in[T_0,T]$$
 where $K^n_{\max}, K^n_{\min}$ are the solutions to equations \eqref{K^max}, \eqref{K^min}. Hence the assertion follows from the fact that $K^n_{\max}=p^n I_d$, $K^n_{\min}(t)=q^nI_d.$
\end{proof}

The preceding proposition established upper and lower bounds for the processes $\sqrt{\Lambda^{-1}}F^n\sqrt{\Lambda^{-1}}$ in terms of the functions $q^n$ and $p^n$ on $[T_0,T]$. For analytical convenience we extend these functions and the bounds to the whole interval $[0,T]$ by putting 
\begin{equation}\label{extension}
\begin{aligned}
q^n(t)& \equiv \lambda^{-1}_{\max}\min_i\{\underline F^{n_0}_i(0)\}\\
 p^n(t)&\equiv \lambda^{-1}_{\min}\max_i\{\overline F_i(T_0)\}
\end{aligned}
\end{equation}
for $t\in[0,T_0)$ and $n>n_0.$ Then,  $$q^n(t)I_d\leq \sqrt{\Lambda^{-1}}F^n(t)\sqrt{\Lambda^{-1}}\leq p^n(t) I_d, \quad t \in [0,T].$$

\subsection{Solving the unconstrained problems}\label{penalization}
In this section, we are going to solve the unconstrained optimisation problem. For any initial state $(t,x,y)\in[0,T)\times\mathbb R^d\times \mathbb R^d$, the dynamics of the state process $(X^{n,*},Y^{n,*})$ under the candidate strategy $\xi^{n,*}$ is given by:
\begin{equation} \label{optimal-state-dynamics}
\left\{
\begin{aligned}
dX^{n,*}(s)&=\{-\Lambda^{-1}D^n(s)X^{n,*}(s)+\Lambda^{-1}E^n(s)Y^{n,*}(s)\}\,ds\\
dY^{n,*}(s)&=\{-(\rho(s)+\gamma \Lambda^{-1}E^n(s))Y^{n,*}(s)+\gamma \Lambda^{-1}D^n(s)X^{n,*}(s)\}\,ds.
\end{aligned}
\right.
\end{equation}
In particular,
$dY^{n,*}(s)=-\gamma \,dX^{n,*}(s)-\rho(s)Y^{n,*}(s)\,ds,$
and hence 
\begin{equation}\label{stateY} 
Y^{n,*}(s)=-\gamma X^{n,*}(s)+e^{-\int_t^s \rho(r) dr}(y+\gamma x)+\int_t^s e^{-\int_u^s \rho(r) dr}\gamma \rho(u) X^{n,*}(u)\,du,
\end{equation}
where $e^{-\int_t^s \rho(r) dr}=\textup{diag}(e^{-\int_t^s \rho_i(r) dr}).$ Thus,
\begin{equation}\label{stateX} 
\begin{aligned}
dX^{n,*}(s)=&\left\{ -\Lambda^{-1}(D^n(s)+E^n(s)\gamma )X^{n,*}(s)+\Lambda^{-1}E^n(s)e^{-\int_t^s \rho(r) dr}(y+\gamma x) \right.\\
& \left. +\Lambda^{-1}E^n(s)\int_t^s e^{-\int_u^s \rho(r) dr}\gamma \rho(u) X^{n,*}(u)\,du \right\} ds. 
\end{aligned}
\end{equation}
In order to solve this  linear ordinary differential equation, we introduce the fundamental matrix $\Phi^n(t,s).$ It is given by the unique solution of the ODE system
\begin{equation} \label{Phi}
\left\{
\begin{aligned}
d\Phi^n(t,s)&=-\Lambda^{-1}(D^n(s)+E^n(s)\gamma )\Phi^n(t,s)\,ds,\\
\Phi^n(t,t)&=I_d.
\end{aligned}
\right.
\end{equation}
The inverse $(\Phi^n)^{-1}$ exists and satisfies 
\begin{equation*} 
\left\{
\begin{aligned}
d\Phi^n(t,s)^{-1}&=\Phi^n(t,s)^{-1}\Lambda^{-1}(D^n(s)+E^n(s)\gamma )\,ds,\\
\Phi^n(t,t)^{-1}&=I_d.
\end{aligned}
\right.
\end{equation*}

The following lemma establishes norm bounds on the fundamental solution and its inverse. 

\begin{lemma}\label{fundamental}
Let us fix $t\in [0,T)$. For all $t\leq s\leq T,$
\begin{equation} \label{fundamental-estimate}
\begin{aligned}
&|\Phi^{n}(t,s)|^2\leq  {d \frac{\lambda_{\max}}{\lambda_{\min}}}\exp\left(-2 \int_t^s q^n(u)\,du\right),\\
&|\Phi^{n}(t,s)^{-1}|^2\leq {d \frac{\lambda_{\max}}{\lambda_{\min}}}\exp\left(2 \int_t^s p^n(u)\,du\right).
\end{aligned}\
\end{equation}
In particular, $|\Phi^{n}(t,\cdot)|$ is uniformly bounded on $[t,T]$, due to Lemma \ref{appendix-estimate}.
\end{lemma}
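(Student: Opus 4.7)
The approach is to reduce both estimates to a matrix Gronwall argument applied to the weighted quadratic form $V(s):=\Phi^n(t,s)^\mathrm T \Lambda\, \Phi^n(t,s)$. The key algebraic observation is that $F^n$ is in fact \emph{symmetric}. Using the block decomposition \eqref{def-F} together with the symmetry of $Q^n$ and the fact that $\gamma$ is diagonal, one computes $F^n = A^n + \gamma C^n\gamma - B^n\gamma - \gamma(B^n)^\mathrm T$, where the cross term $B^n\gamma+\gamma(B^n)^\mathrm T$ is manifestly symmetric. Consequently, the sandwich from Proposition \ref{simple estimate} (extended to $[0,T]$ through \eqref{extension}) can, upon conjugating by $\sqrt\Lambda$, be recast as the matrix inequality
\[
    q^n(s)\Lambda \;\leq\; F^n(s) \;\leq\; p^n(s)\Lambda, \qquad s\in[t,T].
\]

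Differentiating $V$ through \eqref{Phi} and exploiting the symmetry of $F^n$ yields $dV/ds = -2\,\Phi^n(t,s)^\mathrm T F^n(s)\,\Phi^n(t,s)$. Sandwiching the bound on $F^n$ above between $\Phi^n(t,s)^\mathrm T$ and $\Phi^n(t,s)$ produces the matrix differential inequality
\[
    -2\,p^n(s)\,V(s) \;\leq\; \frac{dV}{ds}(s) \;\leq\; -2\,q^n(s)\,V(s)
\]
in the positive semidefinite order. The upper bound, multiplied by the scalar integrating factor $e^{2\int_t^s q^n(u)\,du}$, shows that $s\mapsto e^{2\int_t^s q^n(u)\,du} V(s)$ is nonincreasing in the semidefinite order, hence $V(s)\leq e^{-2\int_t^s q^n(u)\,du}\,\Lambda\leq \lambda_{\max}\,e^{-2\int_t^s q^n(u)\,du}\,I_d$. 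Combining with $\lambda_{\min}I_d\leq\Lambda$ gives $\Phi^n(t,s)^\mathrm T\Phi^n(t,s)\leq (\lambda_{\max}/\lambda_{\min})\,e^{-2\int_t^s q^n(u)\,du}\,I_d$, and the first bound in \eqref{fundamental-estimate} follows by taking the trace and using $|A|^2 = \mathrm{tr}(A^\mathrm T A)$.

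The bound on $\Phi^n(t,s)^{-1}$ is obtained by running the symmetric argument for $V(s)^{-1} = \Phi^n(t,s)^{-1}\Lambda^{-1}[\Phi^n(t,s)^{-1}]^\mathrm T$. The elementary rule $\frac{d}{ds}V^{-1} = -V^{-1}(dV/ds)V^{-1}$ converts the \emph{lower} bound on $dV/ds$ into the \emph{upper} bound $dV^{-1}/ds \leq 2p^n(s)V(s)^{-1}$, which integrates (by the analogous integrating-factor argument) to $V(s)^{-1}\leq e^{2\int_t^s p^n(u)\,du}\,\Lambda^{-1}\leq \lambda_{\min}^{-1}e^{2\int_t^s p^n(u)\,du}\,I_d$. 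Using $\lambda_{\max}^{-1}I_d\leq\Lambda^{-1}$ yields a spectral bound on $\Phi^n(t,s)^{-1}[\Phi^n(t,s)^{-1}]^\mathrm T$, and since the Frobenius norm is invariant under transposition, taking the trace produces the second inequality.

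The main technical point is the symmetry of $F^n$; without it, the derivative $dV/ds$ would carry an antisymmetric component and the sandwich of $F^n$ between $\Phi^{n,\mathrm T}$ and $\Phi^n$ could not be converted into a clean scalar inequality for the eigenvalues of $V$. The remaining steps — the matrix integrating factor in the semidefinite order, which is rigorous because the cone of positive semidefinite matrices is closed and convex, and the passage to Frobenius norms via the trace — are routine. The concluding uniform boundedness of $|\Phi^n(t,\cdot)|$ on $[t,T]$ is immediate from the uniform-in-$n$ control on $\int_t^\cdot q^n(u)\,du$ provided by Lemma \ref{appendix-estimate}.
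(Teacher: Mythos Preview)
Your proposal is correct and follows essentially the same approach as the paper: differentiate the $\Lambda$-weighted quadratic form of $\Phi^n$, invoke the two-sided bound on $\sqrt{\Lambda^{-1}}F^n\sqrt{\Lambda^{-1}}$ from Proposition~\ref{simple estimate} (extended via \eqref{extension}), and apply a Gronwall/integrating-factor argument. The only packaging differences are that the paper works column-by-column with the scalars $\phi_i^{n\,\mathrm T}\Lambda\phi_i^n$ (your $V$ restricted to the diagonal) and handles the inverse by writing down the ODE for $[(\Phi^n)^{-1}]^{\mathrm T}$ and repeating the argument, whereas you treat the full matrix $V$ at once and obtain the inverse bound by differentiating $V^{-1}$; both routes are equivalent and yield the same constants.
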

\begin{proof}
Let $\Phi^n(t,s)=\begin{bmatrix}
\phi_1^n(t,s)&\phi_2^n(t,s)&\cdots&\phi_d^n(t,s)
\end{bmatrix}.$
For $i=1,...,d,$ we obtain by Proposition \ref{simple estimate} and (\ref{extension}) that
\begin{equation*} 
\begin{aligned}
d\phi_i^{n}(t,s)^\mathrm T\Lambda\phi_i^{n}(t,s)&=-2\phi_i^{n}(t,s)^\mathrm T(D^n(s)+E^n(s)\gamma )\phi_i^n(t,s)\,ds\\
&= -2\phi_i^{n}(t,s)^\mathrm T\sqrt{\Lambda}\hat F^n(s)\sqrt{\Lambda}\phi_i^n(t,s)\,ds\\
&\leq -2 q^n(s)\phi_i^{n}(t,s)^\mathrm T\Lambda\phi_i^{n}(t,s).
\end{aligned}
\end{equation*}
Since $q^n(s)$ is discontinuous at $T_0,$  if $t<T_0<s$ we divide the interval $[t,s)$ into two subintervals $[t,T_0), [T_0,s)$. On each subinterval the assumptions of Gronwall's inequality are satisfied. Hence, 
\begin{equation} \label{discontinuity}
\begin{aligned}
&\phi_i^{n}(t,T_0)^\mathrm T\Lambda\phi_i^{n}(t,T_0)\leq \phi_i^{n}(t,t)^\mathrm T\Lambda\phi_i^{n}(t,t)\exp\left(-2\int_t^{T_0} q^n(u)\,du\right), \\
&\phi_i^{n}(t,s)^\mathrm T\Lambda\phi_i^{n}(t,s)\leq \phi_i^{n}(t,T_0)^\mathrm T\Lambda\phi_i^{n}(t,T_0)\exp\left(-2\int_{T_0}^s q^n(u)\,du\right).
\end{aligned}\
\end{equation}
Hence, for all $t \leq s \leq T$,
$$\phi_i^{n}(t,s)^\mathrm T\Lambda\phi_i^{n}(t,s)\leq \phi_i^{n}(t,t)^\mathrm T\Lambda\phi_i^{n}(t,t)\exp\left(-2\int_t^s q^n(u)\,du\right).$$
Since $\Lambda$ is positive definite, and because $\phi^n_i(t,t)$ is the $i$th unit vector this yields 
\begin{equation*} 
\begin{aligned}
|\phi_i^{n}(t,s)|^2 = \phi_i^{n}(t,s)^\mathrm T I_d \, \phi_i^{n}(t,s)  &\leq\frac{1}{\lambda_{\min}}\phi_i^{n}(t,s)^\mathrm T\Lambda\phi_i^{n}(t,s)
\leq \frac{\lambda_{\max}}{\lambda_{\min}}\exp\left(-2\int_t^s q^n(u)\,du\right).
\end{aligned}
\end{equation*}
Hence, 
\begin{equation*} 
\begin{aligned}
|\Phi^{n}(t,s)|^2=\sum\limits_{1\leq i\leq d}|\phi_i^{n}(t,s)|^2&\leq d \frac{\lambda_{\max}}{\lambda_{\min}}\exp\left(-2\int_t^s q^n(u)\,du\right).
\end{aligned}
\end{equation*}
This proves the desired bound for the fundamental solution.  
Since $|(\Phi^n(t,s))^{-1}|=|[(\Phi^n(t,s))^{-1}]^\mathrm T|,$ we may consider 
the differential equation 
\begin{equation*} 
\left\{
\begin{aligned}
d[(\Phi^n(t,s))^{-1}]^\mathrm T&=(D^n(s)+E^n(s)\gamma )\Lambda^{-1}[(\Phi^n(t,s))^{-1}]^\mathrm T\,ds,\\
[(\Phi^n(t,s))^{-1}]^\mathrm T&=I_d,
\end{aligned}
\right.
\end{equation*}
in order to establish the desired bound for the inverse. This system is similar to \eqref{Phi}. The desired bounds thus follow from similar arguments as before. 
%
\end{proof}

The following bounds on the state process $(X^{n,*},Y^{n,*})$ are key to our subsequent analysis.
\begin{proposition}\label{prop-X}
	Let $n>n_0$ for $n_0$  as in \eqref{n_0}. Then there exists a constant $C >0$ that is independent of $n,$ such that for all $s\in [t,T],$
\begin{equation}\label{xestimate}
\begin{aligned}
&|X^{n,*}(s)|\leq C |\Phi^n(t,s)|,\\
&|Y^{n,*}(s)|\leq C.
\end{aligned}
\end{equation}
\end{proposition}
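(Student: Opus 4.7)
The plan is to introduce the auxiliary process $Z^{n,*}(s) := Y^{n,*}(s) + \gamma X^{n,*}(s)$, which decouples the state dynamics \eqref{optimal-state-dynamics}. Summing the two equations shows that $Z^{n,*}$ satisfies the closed linear ODE
\begin{equation*}
	\frac{d}{ds}Z^{n,*}(s) = -\rho(s) Z^{n,*}(s) + \rho(s)\gamma X^{n,*}(s), \qquad Z^{n,*}(t)=y+\gamma x,
\end{equation*}
with the explicit representation
\begin{equation*}
	Z^{n,*}(s) = e^{-\int_t^s\rho(r)\,dr}(y+\gamma x) + \int_t^s e^{-\int_u^s\rho(r)\,dr}\rho(u)\gamma X^{n,*}(u)\,du,
\end{equation*}
while $X^{n,*}$ in turn satisfies
\begin{equation*}
	\frac{d}{ds}X^{n,*}(s) = -\Lambda^{-1}F^n(s)\,X^{n,*}(s) + \Lambda^{-1}E^n(s)\,Z^{n,*}(s).
\end{equation*}
Since $D^n+E^n\gamma=F^n$, the fundamental matrix of this equation is exactly $\Phi^n$, and variation of constants yields
\begin{equation*}
	X^{n,*}(s) = \Phi^n(t,s)\,x + \int_t^s \Phi^n(u,s)\Lambda^{-1}E^n(u)Z^{n,*}(u)\,du.
\end{equation*}

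Next I would establish uniform (in $n$) $L^\infty$-bounds on $|X^{n,*}|$ and $|Z^{n,*}|$; once these are in hand, $|Y^{n,*}(s)|\leq|Z^{n,*}(s)|+|\gamma|\,|X^{n,*}(s)|\leq C$ immediately yields the second claim. The quickest route to the $L^\infty$-bound on $X^{n,*}$ is to observe that $V^n(t,x,y)$ is uniformly bounded in $n$: indeed Lemma \ref{rough estimate} provides an upper bound on $P^n(t)=Q^n(t)-\textup{diag}(0,\gamma^{-1})$ that is independent of $n$ at the fixed initial time $t<T$. Combined with the coercivity $\tfrac12\xi^{\mathrm T}\Lambda\xi\geq\tfrac{\lambda_{\min}}{2}|\xi|^2$ of the running cost in \eqref{control-modified} (and handling the cross term $Y^{\mathrm T}\xi$ by Young's inequality absorbed into the $\xi$-term), this yields the uniform bound $\mathbb E\int_t^T|\xi^{n,*}(u)|^2du\leq C$, hence $|X^{n,*}(s)|\leq|x|+\sqrt{T}\,\bigl(\int_t^T|\xi^{n,*}|^2du\bigr)^{1/2}\leq C$ via Cauchy--Schwarz. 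Substituting back into the explicit formula for $Z^{n,*}$ gives $|Z^{n,*}(s)|\leq C$.

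For the sharper bound $|X^{n,*}(s)|\leq C|\Phi^n(t,s)|$, the variation-of-constants formula reduces the problem to showing that the convolution integral is $O(|\Phi^n(t,s)|)$ uniformly in $n\geq n_0$. The bound $|E^n|\leq C$ is immediate from Lemma \ref{rough estimate}, since $0\leq B^n\leq I_d$ and $0\leq \gamma C^n\leq I_d$ give $|E^n|=|\gamma C^n-B^n|\leq C$. The hard part is that the ratio $|\Phi^n(u,s)|/|\Phi^n(t,s)|$ is only controlled by $C\exp(\int_t^u q^n(r)\,dr)$ through Lemma \ref{fundamental} and Proposition \ref{simple estimate}, and this quantity grows without a uniform bound as $u\to T$ and $n\to\infty$ because $q^n$ diverges near $T$. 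I expect the resolution to come either from a finer Gronwall argument applied to the normalized process $\Phi^n(t,s)^{-1}X^{n,*}(s)$, coupled with the appendix estimate (Lemma \ref{appendix-estimate}) that propagates the required integrability, or from exploiting the quantitative vanishing of the forcing at the terminal time, $E^n(T)=\gamma\cdot\gamma^{-1}-I_d=0$, so that the decay of $|E^n(u)|$ compensates the blow-up of $|\Phi^n(u,s)^{-1}|$. Either way, this convolution bound is the main technical obstacle of the proof.
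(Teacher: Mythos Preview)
Your decoupling via $Z^{n,*}=Y^{n,*}+\gamma X^{n,*}$ and the variation-of-constants formula for $X^{n,*}$ are exactly the route the paper takes (cf.\ \eqref{stateY}--\eqref{stateX}), and you have correctly isolated the main obstacle: controlling the convolution term $\int_t^s \Phi^n(u,s)\Lambda^{-1}E^n(u)Z^{n,*}(u)\,du$ uniformly in $n$. You also correctly anticipate \emph{both} ingredients of the resolution---the Gronwall argument applied to $\tilde X^{n,*}(s):=\Phi^n(t,s)^{-1}X^{n,*}(s)$ and the quantitative decay of $E^n$ near $T$---but you stop short of proving the latter, and this is where the real work lies.

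The bound $|E^n|\leq C$ that you extract from Lemma~\ref{rough estimate} is \emph{not} sufficient: by Lemma~\ref{fundamental} and Lemma~\ref{appendix-estimate}, $|\Phi^n(t,r)^{-1}|$ blows up like $(T-r)^{-1}$ uniformly in $n$, so a merely bounded $E^n$ would leave a logarithmically divergent integral. What is needed, and what the paper establishes in \eqref{boundE}, is the sharper estimate $|E^n(s)|\leq C(T-s)$. This is obtained by writing down the BSDE satisfied by $E^n$ (multiply \eqref{BSRDE'} by $[-I_d\ \gamma]$ on the left and $[0\ I_d]^{\mathrm T}$ on the right), then conjugating with the fundamental matrix $\Phi^n$ to derive an explicit conditional-expectation representation for $E^n(s)$ as an integral over $[s,T]$ of uniformly bounded quantities, exploiting $E^n(T)=0$. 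Once $|E^n(s)|\leq C(T-s)$ is in hand, the iterated Gronwall inequality applied to $\tilde X^{n,*}$ closes immediately, and the bound $|X^{n,*}(s)|\leq C|\Phi^n(t,s)|$ follows without any preliminary crude $L^\infty$-bound on $X^{n,*}$.

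Your proposed shortcut to a uniform bound on $|X^{n,*}|$ via the value function should be discarded: the bound $V^n(t,x,y)\leq C$ controls only the \emph{conditional expectation} $\mathbb E[\int_t^T|\xi^{n,*}|^2\,du\,|\,\mathcal F_t]$, not the pathwise integral, so the Cauchy--Schwarz step does not yield a pathwise bound on $|X^{n,*}(s)|$; moreover, absorbing the cross term $Y^{\mathrm T}\xi$ by Young's inequality requires a bound on $Y$ that is itself derived from the bound on $X$, so the argument is circular. The paper's direct Gronwall argument on $\tilde X^{n,*}$ avoids this entirely.
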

\begin{proof}
Let $\tilde X^{n,*}(s)=\Phi^n(t,s)^{-1}X^{n,*}(s).$ Differentiating this equation and using \eqref{stateX} yields, 
\begin{equation*} 
\begin{aligned}
\tilde X^{n,*}(s)=&x+\int_t^s \Phi^{n}(t,r)^{-1}\Lambda^{-1}E^n(r)\big\{\\
&e^{-\int_t^r \rho(u)\,du}(y+\gamma x)+\int_t^r e^{-\int_u^r \rho(v)\,dv}\gamma \rho(u)\Phi^n(t,u)\tilde X^{n,*}(u)\,du\big\}\,dr.
\end{aligned}
\end{equation*}
Since $\rho\geq 0,$ 
%
the interated integral version of Gronwall's inequality yields
\begin{equation} \label{Xestimate}
\begin{aligned}
|\tilde X_s^{n,*}|\leq&\left[|x|+\int_t^s |\Phi^{n}(t,r)^{-1}\Lambda^{-1}E^n(r)|\cdot|y+\gamma x|\,dr\right]\\
&\cdot \exp\left(\int_t^s |\Phi^{n}(t,r)^{-1}\Lambda^{-1}E^n(r)|\int_t^r |\gamma \rho(u)\Phi^n(t,u)|\,du\,dr\right).
\end{aligned}
\end{equation}
The processes $\Phi^n$ and $\rho$ are bounded. Moreover, we prove below that there exists a constant  $C > 0$, which is independent of $n$ such that for  $s\in [t,T],$
\begin{equation} \label{boundE}
\begin{aligned}
|E^n(s)|\leq 
C(T-s).
\end{aligned}
\end{equation}
Then the desired bounds follow from Lemma \ref{fundamental} and Lemma \ref{appendix-estimate} as 
\begin{equation*}
\begin{aligned}
\int_t^T |\Phi^{n}(t,r)^{-1}\Lambda^{-1}E^n(r)|\,dr&\leq \int_t^T |\Phi^{n}(t,r)^{-1}|\cdot|\Lambda^{-1}|\cdot|E^n(r)|\,dr\\
& \leq\int_t^T\sqrt{\frac{d\lambda_{\max}}{\lambda_{\min}}}\exp\left(\int_t^s p^n(u)\,du\right)\cdot|\Lambda^{-1}|\cdot L(T-r)\,dr\\
&\leq |\Lambda^{-1}|\sqrt{\frac{d\lambda_{\max}}{\lambda_{\min}}}[\int_t^{T_0} \mathbb I_{t\in[0,T_0)}L\cdot C (T-r)\,dr+\int_{T_0}^T L\cdot C\,dr]\\
&<\infty.
\end{aligned}
\end{equation*}

In order to establish the bound (\ref{boundE}) 
we multiply $\begin{bmatrix}
-I_d&\gamma
\end{bmatrix}$ on the left and $\begin{bmatrix}
0\\
I_d
\end{bmatrix}$ on the right in \eqref{BSRDE'} and use the decomposition of the matrix $Q$ introduced prior to Section 3.1. Thus, 
\begin{equation*}
\left\{
\begin{aligned}
-dE^n(s)=&\{-(D^n(s)+E^n(s)\gamma )\Lambda^{-1}E^n(s)-E^n(s)\rho(s)-\rho(s)\gamma C^n(s)+2\rho(s)\}\,ds\\
&\quad-M^n_{E}(s)\,dW(s),\\
E^n(T)=&0,
\end{aligned}
\right.
\end{equation*}
where $M^n_{E}:=\begin{bmatrix}
-I_d&\gamma
\end{bmatrix}M^n\begin{bmatrix}
0\\
I_d
\end{bmatrix}\in L_{\mathcal F}^2(0,T;\mathbb R^{d\times d})$. Recalling the definition of $\Phi^n$ in \eqref{Phi}, 
\begin{equation*}
\begin{aligned}
&-d\left[\Phi^{n}(t,s)^\mathrm TE^n(s)e^{-\int_t^s\rho(u)\,du}\right]\\
=&-\Phi^{n}(t,s)^\mathrm T\left[dE^n(s)+(D^n(s)+E^n(s)\gamma )\Lambda^{-1}E^n(s)+E^n(s)\rho(s)\right]e^{-\int_t^s\rho(u)\,du}\\
=& \Phi^{n}(t,s)^\mathrm T(-\rho(s)\gamma C^n(s)+2\rho(s))e^{-\int_t^s\rho(u)\,du}\,ds-\Phi^{n}(t,s)^\mathrm T M^n_{E}(s)e^{-\int_t^s\rho(u)\,du}\,dW(s).
\end{aligned}
\end{equation*}

The uniform boundedness of $\Phi^n(t,\cdot)$ together with $\rho\geq 0$ and $M^n_E \in L_{\mathcal F}^2(0,T;\mathbb R^{d\times d})$ yields, 
\begin{equation*}
\begin{aligned}
E^n(s)=\mathbb E^{\mathcal F_s}\Bigg\{&[(\Phi^{n}(t,s))^\mathrm T]^{-1}\Phi^{n}(t,T)^\mathrm TE^n(T)e^{-\int_s^T\rho(u)\,du}\\
&+[(\Phi^{n}(t,s))^\mathrm T]^{-1}\int_s^T \Phi^{n}(t,r)^\mathrm T(-\rho(r)\gamma C^n(r)+2\rho(r))e^{-\int_s^r\rho(u)\,du}\,dr\Bigg\}.
\end{aligned}
\end{equation*}

Let $\Psi^n(s,r)=\Phi^{n}(t,r)(\Phi^{n}(t,s))^{-1}.$ Then $\Psi^n(s,\cdot)$ satisfies
\begin{equation*} 
\left\{
\begin{aligned}
d\Psi^n(s,r)&=-\Lambda^{-1}(D^n(r)+E^n(r)\gamma )\Psi^n(s,r)\,dr\\
\Psi^n(s,s)&=I_d.
\end{aligned}
\right.
\end{equation*}
In view of Lemma \ref{fundamental}, $\Psi^n(s,\cdot)$ is uniformly bounded on $[s,T]$. Hence (\ref{boundE}) follows from $E^n(T)=0$ along with the boundedness of $\rho$ and the uniform boundedness of the matrices $C^n$; cf.~Lemma \ref{rough estimate}. 
\end{proof}

\begin{proposition} \label{prop-admissible}
	Let $n>n_0$ for $n_0$ as in \eqref{n_0}. Then the feedback control $\xi^{n,*}$ given in \eqref{feedback-form} is admissible. Moreover, $\xi^{n,*}$ is uniformly bounded.
\end{proposition}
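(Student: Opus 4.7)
The plan is to produce a pathwise bound $|\xi^{n,*}(s)|\le C$ uniform in $s\in[t,T]$ and $n>n_0$, after which $L^2$-admissibility is immediate. Using the identity $D^n = F^n - E^n\gamma$ I would rewrite the feedback law \eqref{feedback-form} as
\begin{equation*}
\xi^{n,*}(s)=\Lambda^{-1}F^n(s)X^{n,*}(s)-\Lambda^{-1}E^n(s)\bigl(\gamma X^{n,*}(s)+Y^{n,*}(s)\bigr),
\end{equation*}
and handle the two summands separately. The second summand is essentially free: by Proposition \ref{prop-X} both $X^{n,*}$ and $Y^{n,*}$ are uniformly bounded (the bound on $X^{n,*}$ via $|\Phi^n|$ together with Lemma \ref{fundamental} and the subsequent Lemma \ref{appendix-estimate} produces a constant independent of $n$), while the auxiliary estimate \eqref{boundE} gives $|E^n(s)|\le C(T-s)$, so the second summand is bounded by $C(T-t)$ uniformly in $s$ and $n$.

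All of the difficulty sits in the first summand. I would pass to the symmetrised quantity $\hat F^n=\sqrt{\Lambda^{-1}}F^n\sqrt{\Lambda^{-1}}$ and write
\begin{equation*}
|\Lambda^{-1}F^n(s)X^{n,*}(s)|\le|\sqrt{\Lambda^{-1}}|_{2,2}\,|\hat F^n(s)|_{2,2}\,|\sqrt{\Lambda}\,X^{n,*}(s)|.
\end{equation*}
Proposition \ref{simple estimate} supplies $|\hat F^n(s)|_{2,2}\le p^n(s)$, while Proposition \ref{prop-X} combined with Lemma \ref{fundamental} supplies $|X^{n,*}(s)|^2\le Cd\tfrac{\lambda_{\max}}{\lambda_{\min}}\exp\!\bigl(-2\!\int_t^s q^n(u)\,du\bigr)$. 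The whole proof then reduces to showing that
\begin{equation*}
s\mapsto p^n(s)\exp\!\Bigl(-\!\int_t^s q^n(u)\,du\Bigr)
\end{equation*}
is bounded uniformly in $s\in[t,T]$ and $n>n_0$. On $[t,T_0\wedge T]$ this is trivial because the extension \eqref{extension} makes $p^n,q^n$ positive constants independent of $n$. On $[T_0\vee t,T]$ the formulas \eqref{keybounds} yield the primitive
\begin{equation*}
\exp\!\Bigl(-\!\int_{T_0\vee t}^s q^n(u)\,du\Bigr) = e^{s-(T_0\vee t)}\frac{\sinh(T-s+\kappa^n_2)}{\sinh(T-(T_0\vee t)+\kappa^n_2)},
\end{equation*}
together with $p^n(s)\le C\bigl(1/\sinh(\sqrt{1+\alpha}(T-s)+\kappa^n_1)+1\bigr)$, so the product collapses to a constant multiple of $\sinh(T-s+\kappa^n_2)/\sinh(\sqrt{1+\alpha}(T-s)+\kappa^n_1)$.

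The main obstacle is obtaining uniform-in-$n$ control of this last ratio as $s\to T$. Here I would use the expansion $\textnormal{arcoth}(x)=\tfrac{1}{2}\log\tfrac{x+1}{x-1}$ for $x>1$ to verify that $\kappa^n_1$ and $\kappa^n_2$ both decay like $c/n$ as $n\to\infty$, and in particular that $\sup_{n>n_0}\kappa^n_2/\kappa^n_1<\infty$. Since $\sinh$ is comparable to its argument on any bounded interval around zero, this bound on $\kappa^n_2/\kappa^n_1$ yields $\sinh(T-s+\kappa^n_2)\le C\sinh(\sqrt{1+\alpha}(T-s)+\kappa^n_1)$ uniformly in $s\in[T_0\vee t,T]$ and $n>n_0$. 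Combining this with the estimate for the second summand gives $|\xi^{n,*}(s)|\le C$ uniformly, so that $\xi^{n,*}\in L^\infty_{\mathcal F}(t,T;\mathbb R^d)\subset L^2_{\mathcal F}(t,T;\mathbb R^d)$, which proves admissibility and uniform boundedness simultaneously.
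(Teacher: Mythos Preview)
Your proof is correct and follows essentially the same approach as the paper: both decompose $\xi^{n,*}$ into a leading term $\Lambda^{-1}F^nX^{n,*}$ controlled via the product $p^n(s)\exp\bigl(-\int_t^s q^n\bigr)$ and a remainder bounded through $|E^n(s)|\le C(T-s)$ together with the uniform bounds on $X^{n,*},Y^{n,*}$ from Proposition~\ref{prop-X}. The paper packages the key product bound into Lemma~\ref{appendix-estimate} (using the rational majorants $\tilde p^n,\tilde q^n$) rather than your direct $\sinh$-ratio computation, but the underlying comparison---that $\kappa_1^n,\kappa_2^n$ are both of order $1/n$ so their ratio stays bounded---is identical.
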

\begin{proof}
From \eqref{stateX}, we know that for $s\in [t,T],$
\begin{equation}\label{statexi} 
\begin{aligned}
\xi^{n,*}(s)=&\Lambda^{-1}(D^n(s)+E^n(s)\gamma )X^{n,*}(s)-\Lambda^{-1}E^n(s)e^{-\int_t^s \rho(r) dr}(y+\gamma x)\\
&-\Lambda^{-1}E^n(s)\int_t^s e^{-\int_u^s \rho(r) dr}\gamma \rho(u) X^{n,*}(u)\,du.
\end{aligned}
\end{equation}
Since the portfolio processes are uniformly bounded, due to Proposition \ref{prop-X} and the processes $E^n$ are uniformly bounded, due to (\ref{boundE}) it is enough to establish an $L^\infty$-bound for the first term on the right side in \eqref{statexi}. 

For this, we recall \cite[Theorem 8.4.9]{Bernstein2005} that $|A|_{2,2}\leq |B|_{2,2}$ for any two symmetric matrices $0\leq A\leq B$. Thus, by Proposition \ref{simple estimate}, Proposition \ref{prop-X} and Lemma \ref{appendix-estimate}, 
\begin{equation*}
\begin{aligned}
\sup_{n>n_0}|\Lambda^{-1}(D^n(s)+E^n(s)\gamma )X^{n,*}(s)| 
&\leq \sup_{n>n_0} |\Lambda^{-1}(D^n(s)+E^n(s)\gamma )|_{2,2}|X^{n,*}(s)|\\
&=\sup_{n>n_0} |\sqrt{\Lambda^{-1}}F^n\sqrt{\Lambda^{-1}}|_{2,2}|X^{n,*}(s)|\\
&\leq \sup_{n>n_0}p^n(s)\cdot C|\Phi^n(t,s)|\\
&\leq \sup_{n>n_0}p^n(s)\cdot C\sqrt{\frac{d\lambda_{\max}}{\lambda_{\min}}}\exp\left(-\int_t^s q^n(u)\,du\right)\\
&<\infty.
\end{aligned}
\end{equation*}
This establishes a uniform $L^\infty$-bound on the optimal trading strategies. 
%
\end{proof}
We are now ready to carry out the verification argument for the unconstrained problem \eqref{control-modified}.
\begin{proof}[Proof of Theorem \ref{OPT-modified}]
Let us fix an initial state $(t,x,y)\in [0,T)\times \mathbb R^d\times\mathbb R^d$ and admissible control $\xi^n\in L_{\mathcal F}^2(t,T;\mathbb R^{d})$. For $R\in \mathbb R_+$ we define the stopping time
$$\tau_R:=\inf\{t\leq s\leq T:|X^n(s)|\vee|Y^n(s)|\geq R\}.$$
Since $(V^n,N^n)$ solves the HJB equation, standard arguments show that for any $\xi^n\in L^2_{\mathcal F}([t,T];\mathbb R^d)$,
%
%
\begin{equation}\label{HJBequation2} 
{\small
\begin{split}
V^n(t,x,y)\leq \mathbb E\bigg\{&nX^n(T\wedge\tau_R)^\mathrm{T}X^n(T\wedge\tau_R)+2Y^n(T\wedge\tau_R)^\mathrm{T}X^n(T\wedge\tau_R)\\
&+\int_t^{T\wedge\tau_R}\big[\frac{1}{2}\xi^n(s)^\mathrm{T}\Lambda\xi^n(s)+Y^n(s)^\mathrm{T}\xi^n(s)+\frac{1}{2}X^n(s)^\mathrm{T}\Sigma(s) X^n(s)\big]\,ds|\mathcal F_t\bigg\}\\
\end{split}}
\end{equation}
where the inequality is an equality if $\xi^n = \xi^{n,*}$. Since $P^n\in L^\infty_{\mathcal F}, X^n, Y^n \in L^2_{\mathcal F}$ and $\Sigma \in\L^\infty_{\mathcal F}$, 
it follows from H\"older's inequality that
$$\begin{bmatrix}
 (X^n)^\mathrm{T}&(Y^n)^\mathrm{T}
\end{bmatrix}P^n \begin{bmatrix}
\ X^n\ \\
Y^n
\end{bmatrix}\in L^1_{\mathcal F}(\Omega;C([t,T];\mathbb R)),$$
and
$$(\xi^n)^\mathrm{T}\Lambda\xi^n, (Y^n)^\mathrm{T}\xi^n, (X^n)^\mathrm{T}\Sigma X^n\in L^1_{\mathcal F}([t,T];\mathbb R).$$
Thus, the dominated convergence theorem applies when letting $R\rightarrow\infty$ in \eqref{HJBequation2}. This yields
\begin{equation}\label{HJBequation3} 
\begin{aligned}
V^n(t,x,y)\leq \mathbb E\bigg\{&nX^n(T)^\mathrm{T}X^n(T)+2Y^n(T)^\mathrm{T}X^n(T)\\
&+\int_t^{T}\big[\frac{1}{2}\xi^n(s)^\mathrm{T}\Lambda\xi^n(s)+Y^n(s)^\mathrm{T}\xi^n(s)+\frac{1}{2}X^n(s)^\mathrm{T}\Sigma(s) X^n(s)\big]\,ds|\mathcal F_t\bigg\}\\
\end{aligned}
\end{equation}
with equality if $\xi^n=\xi^{n,*}.$  
\end{proof}

\subsection{Solving the optimal liquidation problem}\label{original problem}

\subsubsection{The candidate value function}\label{candidate}
In this section, we prove that the limit $$Q(t):=\lim\limits_{n\rightarrow \infty}Q^n(t)$$ exists for $t\in [0,T)$. In particular, the candidate value function for \eqref{control-problem},
$$V(t,x,y):=\begin{bmatrix}
 x^\mathrm{T}&y^\mathrm{T}
\end{bmatrix}\left(Q(t)-\begin{bmatrix}
0&0\\
0&\gamma^{-1}
\end{bmatrix}\right)\begin{bmatrix}
\ x\ \\
y 
\end{bmatrix},$$
is well-defined.
\begin{proof}[Proof of Theorem \ref{existence}]
For given $t\in[0,T),$ the sequence $\{Q^n(t)\}$ is nondecreasing. Moreover, the a priori estimates \eqref{generalestimate} imply that $$|Q^n|\leq \sqrt{|A^n_{\max}|^2+|B^n_{\max}|^2+|C^n_{\max}|^2} \leq C$$ for some constant $C >0$ uniformly on $[0,t].$ In particular, the sequence $\{Q^n(\cdot)\}$ converges pointwise and in $L^2$ to some limiting process $Q(\cdot)$ on $[0,t]$. Using the continuity of $Q^n$, 
$$\liminf\limits_{t\rightarrow T}|Q(t)|\geq \liminf\limits_{t\rightarrow T}|Q(t)|_{2,2}\geq \liminf\limits_{t\rightarrow T}|Q^n(t)|_{2,2}=|Q^n(T)|_{2,2}>n.$$
This shows that $$\liminf\limits_{t\rightarrow T}|Q(t)|= +\infty.$$

We are now going to show that $Q$ is one part of the solution to the matrix differential equation \eqref{limitequation} on $[0,T).$ To this end, let $n>m,$ and let $(Q^n,M^n), (Q^m,M^m)$ be the solutions of \eqref{BSRDE'} with terminal values $\begin{bmatrix}
nI_d&I_d\\
I_d&\gamma^{-1}
\end{bmatrix}$ and $\begin{bmatrix}
mI_d&I_d\\
I_d&\gamma^{-1}
\end{bmatrix},$ respectively.
 Applying the It\^o formula to $|Q^n-Q^m|^2$ on $[s, t],$ we obtain,
\begin{equation}\label{cauchy}
\begin{aligned}
&|Q^n(s)-Q^m(s)|^2+\int^t_s|M^n(r)-M^m(r)|^2\,dr\\
=&|Q^n(t)-Q^m(t)|^2-2\int^t_s \textup{tr}\Big((Q^n(r)-Q^m(r))(M^n(r)-M^m(r))\Big)\,dW(r)\\
&\quad+2\int^t_s \textup{tr}\Big((Q^n(r)-Q^m(r))[g(r,Q^n(r))-g(r,Q^m(r))]\Big)\,dr,
\end{aligned}
\end{equation}
where
\begin{equation*}
\begin{aligned}
g(r,Q(r)):=&-Q(r)\begin{bmatrix}
-I_d\\
\gamma
\end{bmatrix}\Lambda^{-1}\begin{bmatrix}
-I_d&\gamma
\end{bmatrix}Q(r)+Q(r)\begin{bmatrix}
0&0\\
0&-\rho(r)
\end{bmatrix}+\begin{bmatrix}
0&0\\
0&-\rho(r)
\end{bmatrix}Q(r)\\
&+\begin{bmatrix}
\Sigma(r)&0\\
0&\gamma^{-1}\rho(r)+\rho(r)\gamma^{-1}
\end{bmatrix}
\end{aligned}
\end{equation*}
and
\begin{equation*}
\begin{aligned}
g(r,Q^n(r))-g(r,Q^m(r))=&-(Q^n(r)-Q^m(r))\begin{bmatrix}
-I_d\\
\gamma
\end{bmatrix}\Lambda^{-1}\begin{bmatrix}
-I_d&\gamma
\end{bmatrix}(Q^n(r)-Q^m(r))\\
&+\Big(\begin{bmatrix}
0&0\\
0&-\rho(r)
\end{bmatrix}-Q^m(r)\begin{bmatrix}
-I_d\\
\gamma
\end{bmatrix}\Lambda^{-1}\begin{bmatrix}
-I_d&\gamma
\end{bmatrix}\Big)(Q^n(r)-Q^m(r))\\
&+(Q^n(r)-Q^m(r))\Big(\begin{bmatrix}
0&0\\
0&-\rho(r)
\end{bmatrix}-\begin{bmatrix}
-I_d\\
\gamma
\end{bmatrix}\Lambda^{-1}\begin{bmatrix}
-I_d&\gamma
\end{bmatrix}Q^m(r)\Big).
\end{aligned}
\end{equation*}
Due to the symmetry of $Q^n(r)$ and monotonicity of the sequence $\{Q^n(r)\}$, the square root $\sqrt{Q^n(r)-Q^m(r)}$ exists. Since 
$\Lambda^{-1}$ is positive definite, 
\begin{equation*}
\begin{aligned}
&\textup{tr}\Big((Q^n(r)-Q^m(r))[-(Q^n(r)-Q^m(r))\begin{bmatrix}
-I_d\\
\gamma
\end{bmatrix}\Lambda^{-1}\begin{bmatrix}
-I_d&\gamma
\end{bmatrix}(Q^n(r)-Q^m(r))]\Big)\\
&=-\textup{tr}\Big((Q^n(r)-Q^m(r))^{\frac{3}{2}}\begin{bmatrix}
-I_d\\
\gamma
\end{bmatrix}\Lambda^{-1}\begin{bmatrix}
-I_d&\gamma
\end{bmatrix}(Q^n(r)-Q^m(r))^{\frac{3}{2}}\Big)\leq 0.
\end{aligned}
\end{equation*}
Since the sequence $\{Q^n\}$ is uniformly bounded on $[0,t]$ and $\rho, \Sigma \in L^\infty_{\mathcal F}(0,T;\mathcal S^d_+),$
$$\sup_{0\leq r\leq t}\left|\Big(\begin{bmatrix}
0&0\\
0&-\rho(r)
\end{bmatrix}-\begin{bmatrix}
-I_d\\
\gamma
\end{bmatrix}\Lambda^{-1}\begin{bmatrix}
-I_d&\gamma
\end{bmatrix}Q^m(r)\Big)\right|\leq C,$$
for some constant $C > 0$ that is independent of $n, m.$ Using $\textup{tr}(AB)\leq |A|\cdot|B|,$
\begin{equation}\label{generator}
\textup{tr}\Big((Q^n(r)-Q^m(r))[g(r,Q^n(r))-g(r,Q^m(r))]\Big)\leq  C|Q^n(r)-Q^m(r)|^2.
\end{equation}
Moreover, $M^n, M^m \in L_{\mathcal F}^2(0,T;\mathcal S^{2d})$ yields,   
$$\mathbb E\Big[\int^t_s \textup{tr}\Big((Q^n(r)-Q^m(r))(M^n(r)-M^m(r))\Big)\,dW(r)\Big]=0.$$
Hence, 
\begin{equation}\label{Mn}
\begin{aligned}
\mathbb E\int^t_s|M^n(r)-M^m(r)|^2\,dr
\leq \mathbb E\Big[|Q^n(t)-Q^m(t)|^2+C \int^t_s |Q^n(r)-Q^m(r)|^2\,dr\Big].
\end{aligned}
\end{equation}
Using the Burkholder-Davis-Gundy inequality in \eqref{cauchy} yields a constant $C > 0$ such that
\begin{equation}\label{BDG}
\begin{aligned}
&\mathbb E\sup_{0\leq s\leq t}|Q^n(s)-Q^m(s)|^2\\
\leq&\mathbb E\Big[|Q^n(t)-Q^m(t)|^2+\int^t_0 C |Q^n(r)-Q^m(r)|^2\,dr\Big]\\
&+ C\mathbb E\left[\sqrt{\int_0^t|Q^n(r)-Q^m(r)|^2|M^n(r)-M^m(r)|^2\,dr}\right].
\end{aligned}
\end{equation}
By Young's inequality, 
\begin{equation*}
\begin{aligned}
&C\mathbb E\left[\sqrt{\int_0^t|Q^n(r)-Q^m(r)|^2|M^n(r)-M^m(r)|^2\,dr}\right]\\
\leq&\frac{1}{2}\mathbb E\sup_{0\leq s\leq t}|Q^n(s)-Q^m(s)|^2+C\mathbb E\int_0^t|M^n(r)-M^m(r)|^2\,dr.
\end{aligned}
\end{equation*}
Altogether, we arrive at
\begin{equation*}
\begin{aligned}
\mathbb E\sup_{0\leq s\leq t}|Q^n(s)-Q^m(s)|^2\,ds\leq&C \mathbb E\left[|Q^n(t)-Q^m(t)|^2+\int^t_0 |Q^n(r)-Q^m(r)|^2dr\right].
\end{aligned}
\end{equation*}
The right-hand side converges to zero as $n, m \rightarrow \infty.$ This shows that$$Q\in L_{\mathcal F}^\infty(\Omega;C([0,T^-];\mathcal S^{2d}_+)).$$ Furthermore, \eqref{Mn} implies that $\{M^n\}$ is a Cauchy sequence in $L_{\mathcal F}^2(0,t;\mathcal S^{2d})$ and converges to some $M \in L_{\mathcal F}^2(0,t;\mathcal S^{2d})$, for every $t<T.$ Taking the limit $n\rightarrow \infty$ in \eqref{BSRDE'} implies $(Q, M)$ satisfies the matrix differential equation \eqref{limitequation} on $[0,T).$ Compact convergence  follows by Dini's theorem, due to the monotonicity.
\end{proof}

\subsubsection{Verification}\label{Verification}
Before proving that the strategy $\xi^*$ defined in Theorem \ref{OPT} is admissible, we first analyse the controlled processes $X^*, Y^*$ and show that $\xi^*$ is a liquidation strategy, i.e.~that
$\lim\limits_{s\rightarrow T}X^*(s)=0.$
\begin{proposition}
\begin{itemize}
\item[(i)] Let ${Z^{n,*}}^\mathrm T=({X^{n,*}}^\mathrm T,{Y^{n,*}}^\mathrm T),  {Z^{*}}^\mathrm T=({X^{*}}^\mathrm T,{Y^{*}}^\mathrm T).$ Then
$$Z^{n,*}\stackrel{n\rightarrow \infty}{\longrightarrow}Z^*  \textit{ compactly on } [t,T).$$
\item[(ii)]$$n|X^{n,*}(T)|^2\stackrel{n\rightarrow \infty}{\longrightarrow}0$$
In particular,
\begin{equation} 
\begin{aligned}
&X^{n,*}(T)\stackrel{n\rightarrow \infty}{\longrightarrow}X^*(T)=\lim\limits_{s\rightarrow T}X^*(s)=0.\\
&Y^{n,*}(T)^\mathrm{T}X^{n,*}(T)\stackrel{n\rightarrow \infty}{\longrightarrow} Y^{*}(T)^\mathrm{T}X^{*}(T)=0
\end{aligned}
\end{equation}
\end{itemize}
\end{proposition}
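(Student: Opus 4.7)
The plan is to treat parts (i) and (ii) in turn, using the compact convergence $Q^n \to Q$ from Theorem \ref{existence} for the first, and the quantitative bound $|X^{n,*}(s)| \le C|\Phi^n(t,s)|$ from Proposition \ref{prop-X} combined with the explicit formula \eqref{keybounds} for $q^n$ from Lemma \ref{doubleK} for the second.

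For (i) I will use the fact that both $Z^{n,*} = (X^{n,*}, Y^{n,*})$ and the candidate $Z^* = (X^*, Y^*)$ solve a linear ODE of the form $dZ/ds = A^n(s) Z$ (respectively $A(s)Z$), the coefficient matrix being continuously assembled from $(D^n, E^n, \rho, \gamma, \Lambda)$. Fix $T' \in (t, T)$. Since $Q^n \to Q$ compactly on $[0,T)$, one has $A^n \to A$ uniformly on $[t, T']$ and $\sup_n \sup_{r \in [t, T']} |A^n(r)| < \infty$; together with the uniform bounds on $Z^{n,*}$ and $Z^*$ from Proposition \ref{prop-X}, writing
\[
Z^{n,*}(s) - Z^*(s) = \int_t^s A^n(r)\bigl[Z^{n,*}(r) - Z^*(r)\bigr]\,dr + \int_t^s \bigl[A^n(r) - A(r)\bigr] Z^*(r)\,dr
\]
and applying Gronwall's inequality to $|Z^{n,*}(s) - Z^*(s)|$ will yield uniform convergence on $[t, T']$. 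As $T'$ is arbitrary this is compact convergence on $[t, T)$.

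For (ii) the core step is to evaluate Proposition \ref{prop-X} and Lemma \ref{fundamental} at $s = T$, giving
\[
n|X^{n,*}(T)|^2 \;\le\; C\, n\, \exp\!\Bigl(-2\int_t^T q^n(u)\,du\Bigr).
\]
On $[T_0, T]$ the identity $\int [\coth(x) - 1]\,dx = \ln\sinh(x) - x$ together with the substitution $v = T - u$ produces, in closed form,
\[
\exp\!\Bigl(-2\int_{T_0}^T q^n(u)\,du\Bigr) \;=\; e^{2(T - T_0)}\,\frac{\sinh^2(\kappa^n_2)}{\sinh^2(T - T_0 + \kappa^n_2)}.
\]
Since $\kappa^n_2 = \textnormal{arcoth}\bigl(\tfrac{n - \gamma_{\max}}{\lambda_{\max}} + 1\bigr) \sim \lambda_{\max}/n$ as $n \to \infty$, I obtain $\sinh(\kappa^n_2) \sim 1/n$ and $\sinh^2(T - T_0 + \kappa^n_2) \to \sinh^2(T - T_0) > 0$, so the right-hand side is of order $1/n^2$; the remaining integral $\int_t^{T_0} q^n$ contributes only a constant factor independent of $n$ by the extension \eqref{extension}. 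Altogether $n|X^{n,*}(T)|^2 \le C/n \to 0$ uniformly in $\omega$.

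The `in particular' consequences then fall out. $X^{n,*}(T) \to 0$ is immediate; for $\lim_{s \to T} X^*(s) = 0$ I will pass to the pointwise limit $n \to \infty$ in $|X^{n,*}(s)| \le C\exp(-\int_t^s q^n(u)\,du)$. Because $\kappa^n_2 \downarrow 0$ forces $q^n \uparrow q := \coth(T - \cdot) - 1$ on $[T_0, T)$, monotone convergence yields $|X^*(s)| \le C\exp(-\int_t^s q(u)\,du)$, and $\int_t^s q \to \infty$ as $s \to T$. Combining $X^{n,*}(T) \to 0$ with the uniform bound $|Y^{n,*}| \le C$ from Proposition \ref{prop-X} gives $Y^{n,*}(T)^\mathrm{T} X^{n,*}(T) \to 0$ by Cauchy--Schwarz, while $Y^*(T)^\mathrm{T} X^*(T) = 0$ is interpreted as $\lim_{s\to T} Y^*(s)^\mathrm{T} X^*(s) = 0$. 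The main obstacle is the $O(1/n^2)$ decay of $\exp(-2\int_t^T q^n)$: once that explicit asymptotic is pinned down from the closed form of $q^n$, everything else reduces to routine Gronwall and ODE-continuity arguments.
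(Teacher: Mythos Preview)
Your proof is correct and follows essentially the same route as the paper. For (i) both you and the paper write $Z^{n,*}$ and $Z^*$ as solutions of linear ODEs with coefficient matrices built from $Q^n$ and $Q$, and then invoke continuous dependence (Gronwall) together with the uniform compact convergence $Q^n\to Q$. For (ii) both arguments combine the bound $|X^{n,*}(s)|\le C|\Phi^n(t,s)|$ with the estimate $|\Phi^n(t,s)|^2\le C\exp(-2\int_t^s q^n)$ and an explicit asymptotic of $\exp(-2\int_t^T q^n)$ of order $1/n^2$; the only cosmetic difference is that the paper packages your $\sinh$ computation into the appendix estimate (Lemma~\ref{appendix-estimate}), and that the paper derives $|X^*(s)|\le C e^{-\int_t^s q}$ by rerunning Proposition~\ref{prop-X} directly for the limiting process, whereas you obtain it by passing to the limit in the inequality for $X^{n,*}$ using part~(i) and monotone convergence of $q^n\uparrow q$.
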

\begin{proof}
(i)\,Let $t\leq T'<T.$ On $[t,T'],  Z^*$ and $Z^{n,*}$ solve the differential equations
\begin{equation*} 
\begin{aligned}
&dZ=
\left(-\begin{bmatrix}
-I_d\\
\gamma
\end{bmatrix}\Lambda^{-1}\begin{bmatrix}
-I_d&\gamma
\end{bmatrix}R+\begin{bmatrix}
0&0\\
0&-\rho
\end{bmatrix}\right)Z 
\end{aligned}
\end{equation*}
for $R=Q$ and $R=Q^n$, respectively. Since on $[0,T']$ the sequences $\{Q^n\}$ and $\{Z^{n,*}\}$ are uniformly bounded  and $\{Q^n\}$ uniformly converges to $Q$, the first assertion follows from the continuous dependence of solutions of systems of ordinary linear differential equations on the right side.

(ii) The convergence of the sequence $\{n|X^{n,*}(T)|^2\}$ to zero follows from Proposition \ref{prop-X} along with Lemma \ref{fundamental} and Lemma \ref{appendix-estimate}. 
From the bound on $\sqrt{\Lambda^{-1}} F^n\sqrt{\Lambda^{-1}}$ in \eqref{weightedbounds} and the definition of $ p^n,  q^n$ in \eqref{keybounds}, \eqref{extension},  we know that
$$\lim\limits_{n\rightarrow \infty}\sqrt{\Lambda^{-1}} F^n\sqrt{\Lambda^{-1}}=\sqrt{\Lambda^{-1}} F\sqrt{\Lambda^{-1}}$$ can be bounded from above and below by $ p(s):=\lim\limits_{n\rightarrow \infty} p^n(s)$ and $q(s):=\lim\limits_{n\rightarrow \infty}  q^n(s),$ respectively. \\
Therefore,  similar argument to the proof of Proposition \ref{prop-X} show that
\begin{equation*}
\begin{aligned}
|X^{*}(s)|\leq  C|\Phi(t,s)|\leq C\sqrt{\frac{d\lambda_{\max}}{\lambda_{\min}}}e^{-\int^s_t  q(u)du}.
\end{aligned}
\end{equation*}
By Lemma \ref{appendix-estimate}, 
$\lim\limits_{s\rightarrow T}e^{\int^s_t- q(u)du}=\lim\limits_{s\rightarrow T}\lim\limits_{n\rightarrow \infty}e^{\int^s_t- q^n(u)du}=0$,
which yields
$$\lim\limits_{s\rightarrow T}X^*(s)=0.$$
Using the uniform boundedness of $|Y^{n,*}(s)|$, similar arguments show that
$$\lim\limits_{s\rightarrow T}Y^{n,*}(s)^\mathrm TX^{n,*}(s)=Y^{*}(T)^\mathrm TX^{*}(T)=0.$$
\end{proof}

From the compact convergence results on $Q^n, X^{n,*},Y^{n,*},$ we know that
$$\xi^{n,*}(\cdot,X^{n,*}(\cdot),Y^{n,*}(\cdot))\stackrel{n\rightarrow \infty}{\longrightarrow}\xi^{*}(\cdot,X^{*}(\cdot),Y^{*}(\cdot)) \textit{ compactly on } [t,T).$$
\begin{proposition}\label{liquidation strategy}
	The feedback control $\xi^{*}$  is an admissible liquidation strategy.
\end{proposition}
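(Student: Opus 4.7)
The plan is to verify the two defining conditions for admissibility of $\xi^*$: (i) $\xi^* \in L^2_{\mathcal F}(t,T;\mathbb R^d)$, and (ii) the controlled state process satisfies the liquidation constraint $X^*(T)=0$ $\mathbb P$-a.s. Condition (ii) is immediate from the preceding proposition, which already furnishes $\lim_{s\to T}X^*(s)=0$ (and, via the uniform boundedness of $Y^{n,*}$ together with $X^*(T)=0$, also $Y^{*}(T)^{\mathrm T}X^{*}(T)=0$, which will matter for later verification).

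For (i), the idea is simply to transfer the uniform $L^\infty$-bound on $\{\xi^{n,*}\}$ established in Proposition \ref{prop-admissible} to $\xi^*$ through the compact convergence $\xi^{n,*}(\cdot,X^{n,*}(\cdot),Y^{n,*}(\cdot)) \to \xi^*(\cdot,X^*(\cdot),Y^*(\cdot))$ on $[t,T)$ already observed just before the proposition statement. Concretely, Proposition \ref{prop-admissible} provides a constant $C>0$, independent of both $n>n_0$ and $s\in[t,T]$, with $|\xi^{n,*}(s)|\leq C$. Fixing any $s\in[t,T)$ and letting $n\to\infty$ along the compactly convergent sequence yields $|\xi^*(s)|\leq C$. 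Since $C$ does not depend on $s$, this gives $\xi^* \in L^\infty_{\mathcal F}(t,T^-;\mathbb R^d)$ with essential supremum uniform on $[t,T)$, and hence
\[
\mathbb E\int_t^T |\xi^*(s)|^2\,ds \;\leq\; C^2(T-t)\;<\;\infty,
\]
which is (i).

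The only mildly delicate point is that compact convergence does not by itself describe the behaviour at $s=T$; but this is of no consequence, since the uniform bound is already independent of $s$ on the whole interval $[t,T]$, so its pointwise passage to the limit on $[t,T)$ combined with Lebesgue measure-zero of $\{T\}$ suffices for square integrability. In short, the admissibility of $\xi^*$ is a direct corollary of the uniform estimates from Proposition \ref{prop-admissible} together with the convergence and limiting-terminal results in the previous proposition, and there is no substantial obstacle left to overcome.
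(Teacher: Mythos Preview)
Your argument is correct. Both conditions for admissibility are handled: the liquidation constraint $X^*(T)=0$ is indeed supplied by the preceding proposition, and the uniform $L^\infty$-bound on $\{\xi^{n,*}\}$ from Proposition~\ref{prop-admissible} does transfer pointwise on $[t,T)$ to $\xi^*$ via the compact convergence noted immediately before the statement, which suffices for $\xi^*\in L^2_{\mathcal F}(t,T;\mathbb R^d)$.

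The paper takes a slightly different route: rather than passing the uniform estimate through the limit, it re-derives the bound directly for the limiting objects, writing out $\xi^*(s)$ in the same form as \eqref{statexi} with $D,E,X^*$ in place of $D^n,E^n,X^{n,*}$, and then invoking $|\Lambda^{-1}F(s)|_{2,2}\leq p(s)$ together with the decay of $|X^*(s)|$ (from $|\Phi(t,s)|$ and Lemma~\ref{appendix-estimate}) to conclude $\xi^*\in L^\infty_{\mathcal F}(\Omega;C([t,T];\mathbb R^d))$. Your approach is more economical, since it avoids repeating the estimate and simply leverages work already done; the paper's approach buys the marginally stronger conclusion of continuity and boundedness of $\xi^*$ up to and including $T$, though this extra regularity is not required for membership in $\mathcal A(t,x)$.
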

\begin{proof}
Similarly to the proof of Proposition \ref{prop-admissible}, for $s\in[t,T],$
\begin{equation*}
\begin{aligned}
\xi^{*}(s)=&\Lambda^{-1}(D(s)+E(s)\gamma )X^{*}(s)-\Lambda^{-1}E(s)e^{-\int_t^s \rho(r) dr}(y+\gamma x)\\
&-\Lambda^{-1}E(s)\int_t^s e^{-\int_u^s \rho(r) dr}\gamma \rho(u) X^{*}(u)\,du,
\end{aligned}
\end{equation*}
and
\begin{equation*}
\begin{aligned}
|\Lambda^{-1}(D(s)+E(s)\gamma )X^{*}(s)|&\leq p(s)|X^{*}(s) |< \infty.
\end{aligned}
\end{equation*}
Therefore,
\begin{equation*}
\begin{aligned}
|\xi^{*}(s)|\leq & |\Lambda^{-1}(D(s)+E(s)\gamma )X^{*}(s)|+|\Lambda^{-1}E(s)e^{-\int_t^s \rho(r) dr}(y+\gamma x)| \\
&+|\Lambda^{-1}E(s)\int_t^s e^{-\int_u^s \rho(r) dr}\gamma \rho(u) X^{*}(u)\,du|<\infty.
\end{aligned}
\end{equation*}
Hence $$\xi^{*}\in L^\infty_{\mathcal F}(\Omega;C([t,T];\mathbb R^d)).$$ 
\end{proof}
We are now ready to verify that the limit of the solution to \eqref{control-modified} is indeed the solution to the original problem \eqref{control-problem}.
\begin{proof}[Proof of Therorem \ref{OPT}]
We fix $(t,x,y)\in [0,T)\times \mathbb R^d\times\mathbb R^d$. Noticing that $\xi^{n,*}, X^{n,*}, Y^{n,*} $ are uniformly bounded on $[t, T]$ and respectively converge to $\xi^{*}, X^{*}, Y^{*}$ as $n\rightarrow +\infty,$ we can apply the dominated convergence theorem to obtain 
\begin{equation} 
\begin{aligned}
&\lim\limits_{n\rightarrow \infty}V^n(t,x,y) \\
&\begin{aligned}
=\lim\limits_{n\rightarrow \infty} \mathbb E\bigg\{
&nX^n(T)^\mathrm{T}X^{n,*}(T)+2Y^{n,*}(T)^\mathrm{T}X^{n,*}(T)+\int_t^T\big[\frac{1}{2}\xi^{n,*}(s)^\mathrm{T}\Lambda\xi^{n,*}(s)\\
&+Y^{n,*}(s)^\mathrm{T}\xi^{n,*}(s)+\frac{1}{2}X^{n,*}(s)^\mathrm{T}\Sigma(s) X^{n,*}(s)\big]\,ds|\mathcal F_t\bigg\}.\end{aligned}\\
&\begin{aligned}=\mathbb E\bigg\{
&\lim\limits_{n\rightarrow \infty}nX^n(T)^\mathrm{T}X^{n,*}(T)+2Y^{n,*}(T)^\mathrm{T}X^{n,*}(T)+\lim\limits_{n\rightarrow \infty}\int_t^T\big[\frac{1}{2}\xi^{n,*}(s)^\mathrm{T}\Lambda\xi^{n,*}(s)\\
&+Y^{n,*}(s)^\mathrm{T}\xi^{n,*}(s)+\frac{1}{2}X^{n,*}(s)^\mathrm{T}\Sigma(s) X^{n,*}(s)\big]\,ds|\mathcal F_t\bigg\}.\end{aligned}\\
&= \mathbb E\bigg\{\int_t^T\lim\limits_{n\rightarrow \infty}\big[\frac{1}{2}\xi^{n,*}(s)^\mathrm{T}\Lambda\xi^{n,*}(s)+Y^{n,*}(s)^\mathrm{T}\xi^{n,*}(s)+\frac{1}{2}X^{n,*}(s)^\mathrm{T}\Sigma(s) X^{n,*}(s)\big]\,ds|\mathcal F_t\bigg\}\\
&=\mathbb E\bigg\{\int_t^T\big[\frac{1}{2}\xi^{*}(s)^\mathrm{T}\Lambda\xi^{*}(s)+Y^{*}(s)^\mathrm{T}\xi^{*}(s)+\frac{1}{2}X^{*}(s)^\mathrm{T}\Sigma(s) X^{*}(s)\big]\,ds|\mathcal F_t\bigg\}\\
&=V(t,x,y)\\
\end{aligned}
\end{equation}
Therefore, $\xi^*$ solves the Optimization Problem \eqref{control-problem} and the value function is given by $V$.
\end{proof}

\section{Numerical analysis} 
In this section we simulate the solution to a deterministic benchmark model with two assets. In order to simplify the exposition, we assume that there is no cross asset price impact and choose
\begin{equation}
\Lambda=\begin{bmatrix}
\lambda_1&0\\
0 &\lambda_2
\end{bmatrix},\qquad\Sigma=\begin{bmatrix}
\sigma_1^2&k\sigma_1\sigma_2\\\
k\sigma_1\sigma_2&\sigma_2^2
\end{bmatrix}.
\end{equation}

If all the cost coefficients are deterministic constants, the stochastic Riccati equation reduces to a multi-dimensional ODE system that can be solved numerically using the MATLAB package \texttt{bvpsuite}~\cite{Kitzhoferetal10}. This package is designed for solving ODE systems with regular singular points.

Figure 1 shows that the value function increases in the correlation of the assets' fundamental price processes. This is natural as a negative correlation reduces risk costs. We also see that for our choice of model parameters the more liquid asset is liquidated at a much faster rate than the less liquid one and that the initial liquidation rate increases in the correlation. Both results are intuitive; fast liquidation reduces risk cost and the cost savings are increasing in the correlation. Moreover, the less liquid asset is liquidated at an almost constant rate while the more liquid asset is liquidated at a convex rate with the degree of convexity decreasing in the correlation. 

The convexity of the optimal liquidation strategy is consistent with the single asset case analyzed in \cite{GH2017}. There, it is shown that when the instantaneous market impact is small, the optimal liquidation strategy resembles a strategy with block trades: the (single) asset is liquidated at a very high rate initially and close to the terminal time. Similar results for the 2-dimensional case are shown in Figure 2 where the dependence of the value function (left) and the optimal liquidation strategy (right) on the impact factors $\lambda_1$ is depicted.

\begin{figure}[H]\label{figure1}
\begin{minipage}[c]{0.5\textwidth}
\centering
\includegraphics[height=4.5cm,width=7.5cm]{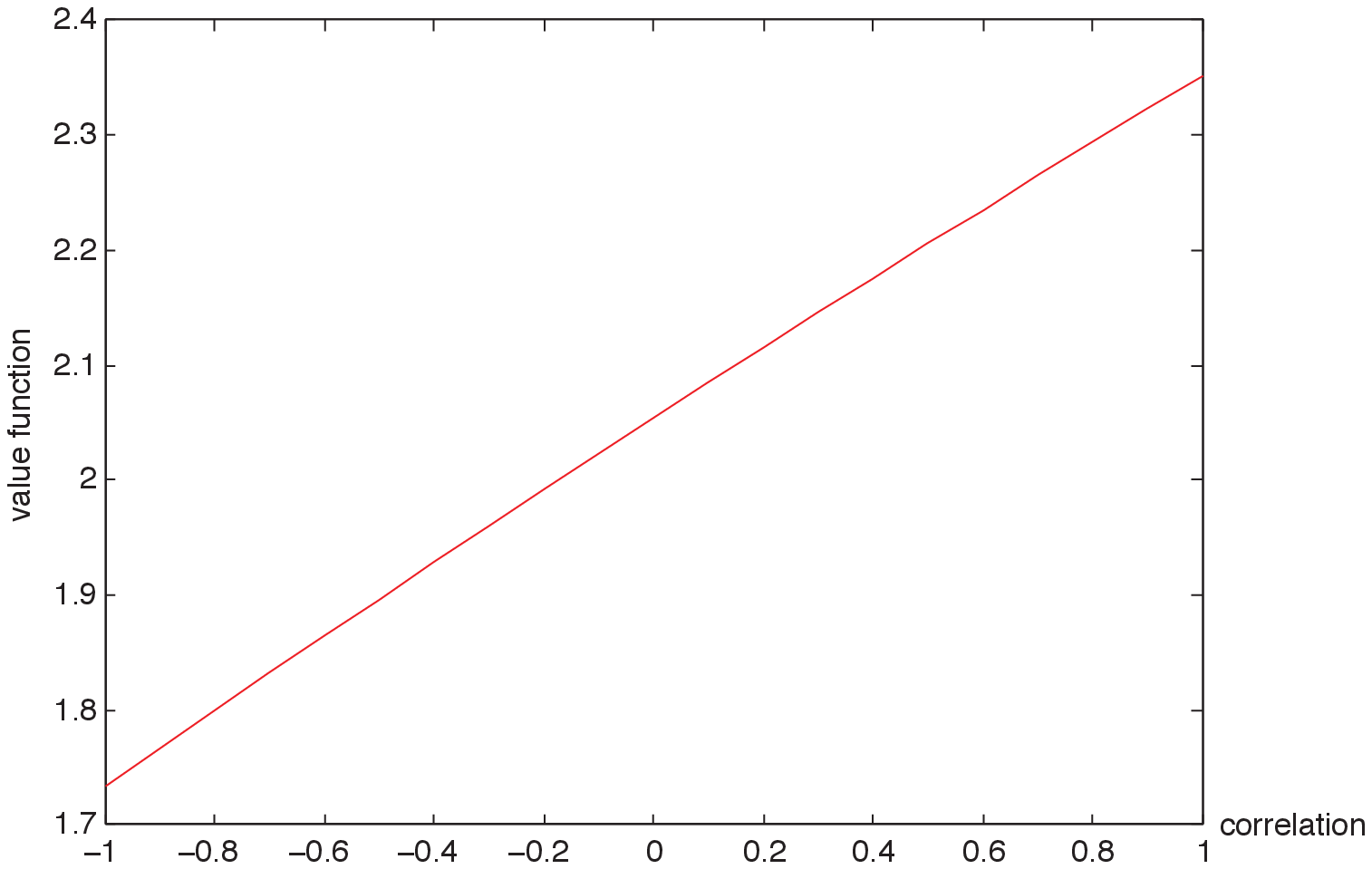}
\end{minipage}
\begin{minipage}[c]{0.5\textwidth}
\centering
\includegraphics[height=4.5cm,width=7.5cm]{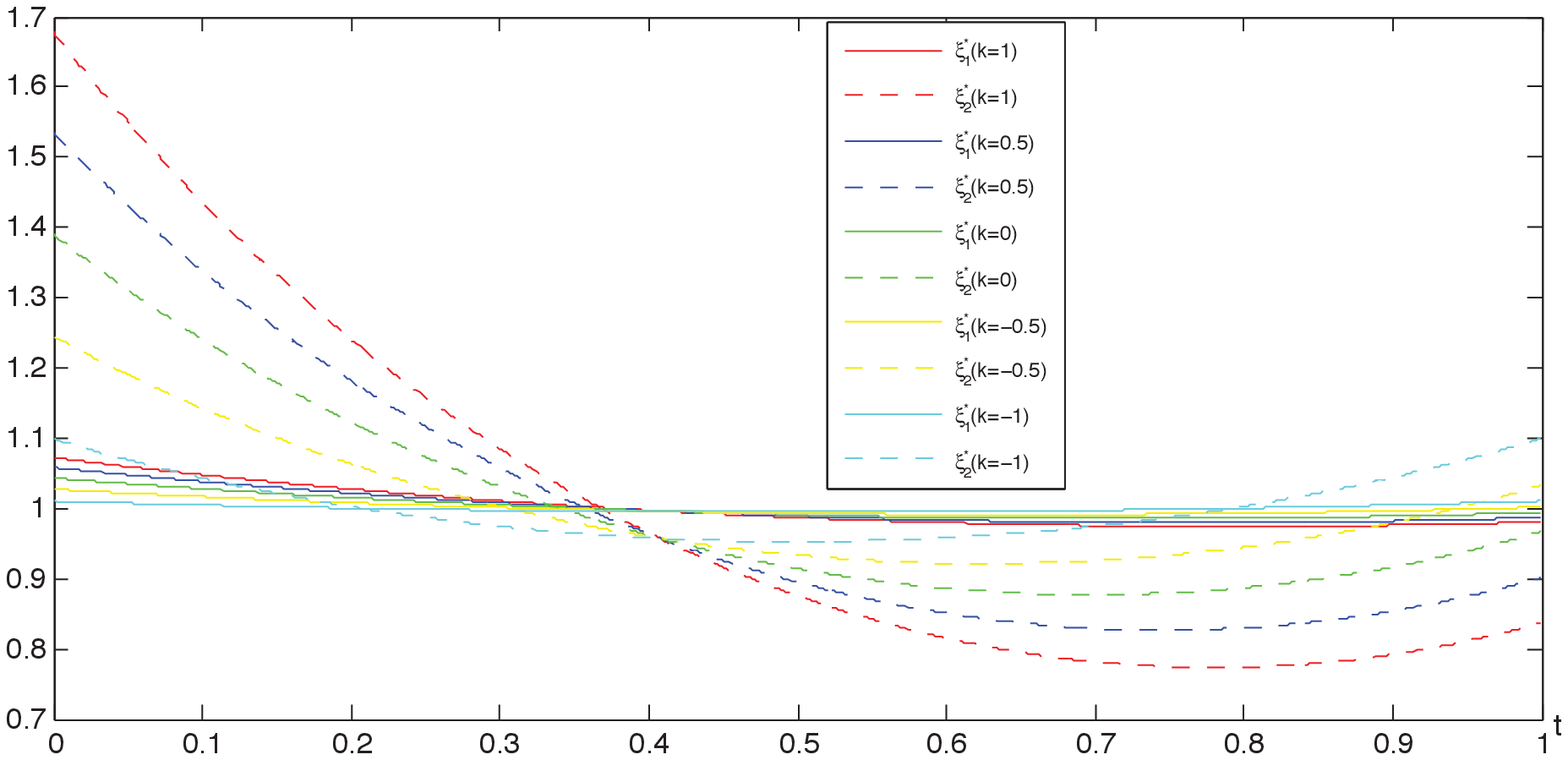}
\end{minipage}
\caption{Dependence of the value function (left) and the optimal trading
strategies (right) on the correlation $k$ for the parameter values $x_1 = x_2 = 1, y_1=y_2=0, T = 1, \lambda_1 = 10, \lambda_2 = 1, \sigma_1=\sigma_2=\gamma_1=\gamma_2=\rho_1=\rho_2=1$.}
\end{figure}

\begin{figure}[H]\label{figure2}
\begin{minipage}[c]{0.5\linewidth}
\centering
\includegraphics[height=4.5cm,width=7.5cm]{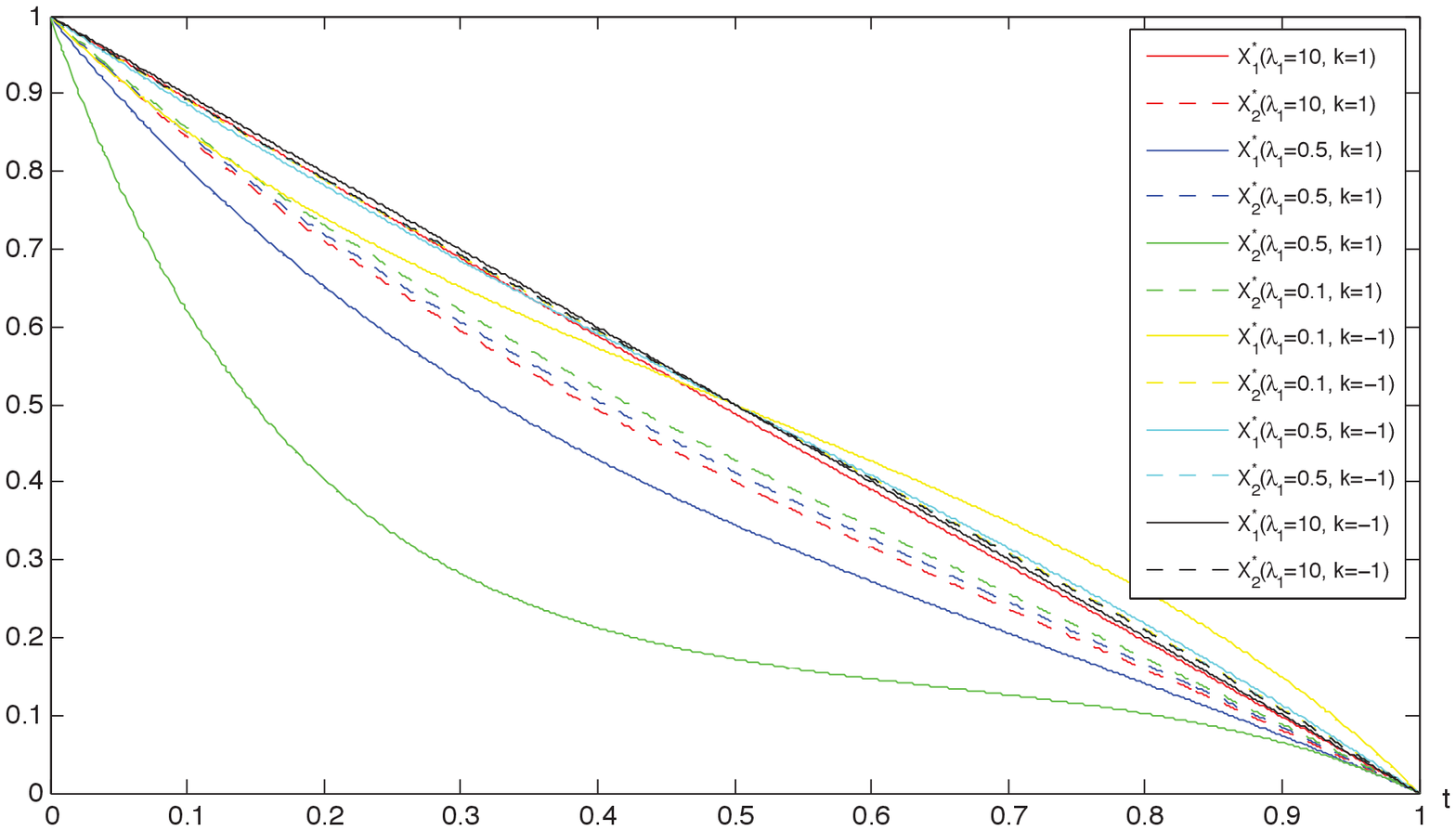}
\end{minipage}
\begin{minipage}[c]{0.5\linewidth}
\centering
\includegraphics[height=4.5cm,width=7.5cm]{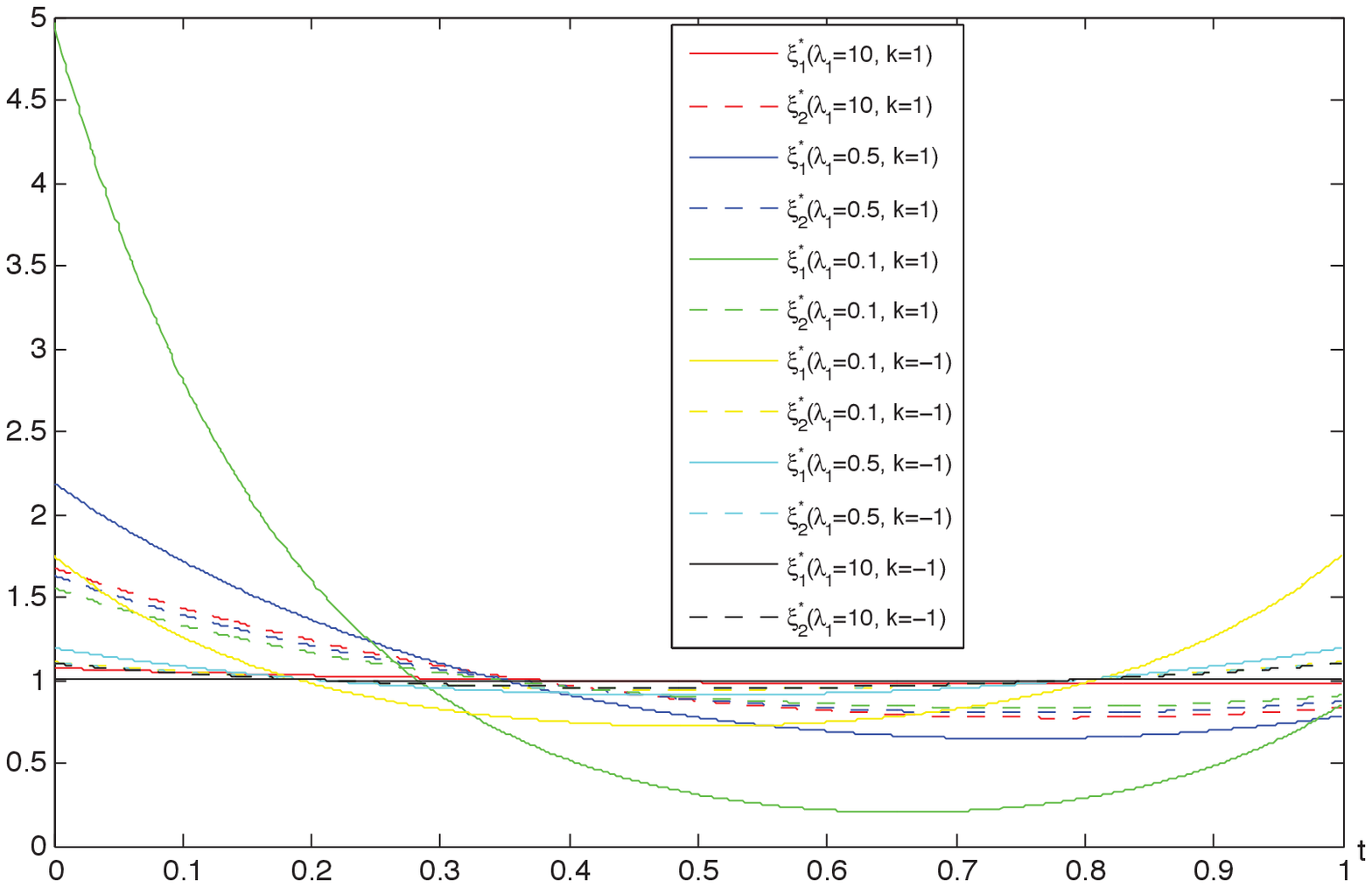}
\end{minipage}
\caption{Dependence of the optimal positions (left) and the trading strategies (right) on the instantaneous impact factor $\lambda_1$ for the parameter values $x_1 = x_2 = 1, y_1=y_2=0, T = 1, \lambda_2 = 1, \sigma_1=\sigma_2=\gamma_1=\gamma_2=\rho_1=\rho_2=1$. }
\end{figure}

While the initial trading rate {\sl decreases} if the instantaneous impact factor increases, our simulations suggest that it {\sl increases} with the persistent impact factors. This effect can already been seen in the single asset case as shown in Figure 3. Simulations for the 2-dimensional case are shown in Figure 4. If the persistent impact factor is large, early trading benefits from resilience. In fact, if there is no resilience and if the persistent impact dominates the cost function to the extend that we may drop the instantaneous impact and risk cost, the resulting Lagrange equation is zero and any liquidation strategy is optimal. If the resilience is positive large initial trades benefit from resilience effects.  

 The dependence of the optimal solution on the resilience factor $\rho_1$ is shown in Figure 5. Although we observe again that the optimal strategy is convex, the dependence of the convexity on the strength of resilience is less clear than that on the impact factors. We also see from that figure (red and green curves) that short positions can not be excluded; if the assets are strongly positively correlated, a negative position in one asset may well be beneficial in order to balance the portfolio risk.   

 \begin{figure}[H]\label{figure3}
\begin{minipage}[c]{0.5\linewidth}
\centering
\includegraphics[height=4.5cm,width=7.5cm]{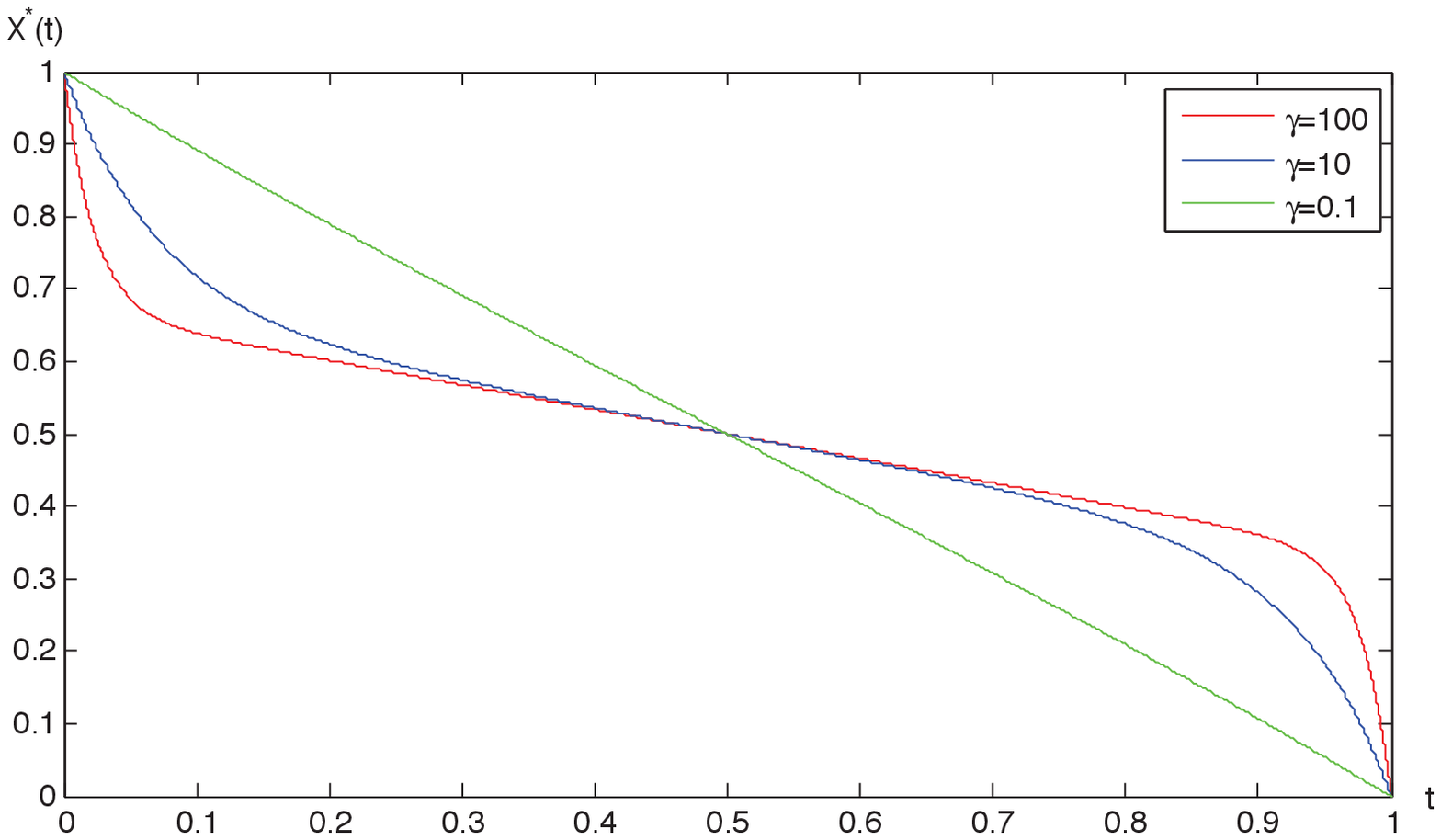}
\end{minipage}
\begin{minipage}[c]{0.5\linewidth}
\centering
\includegraphics[height=4.5cm,width=7.5cm]{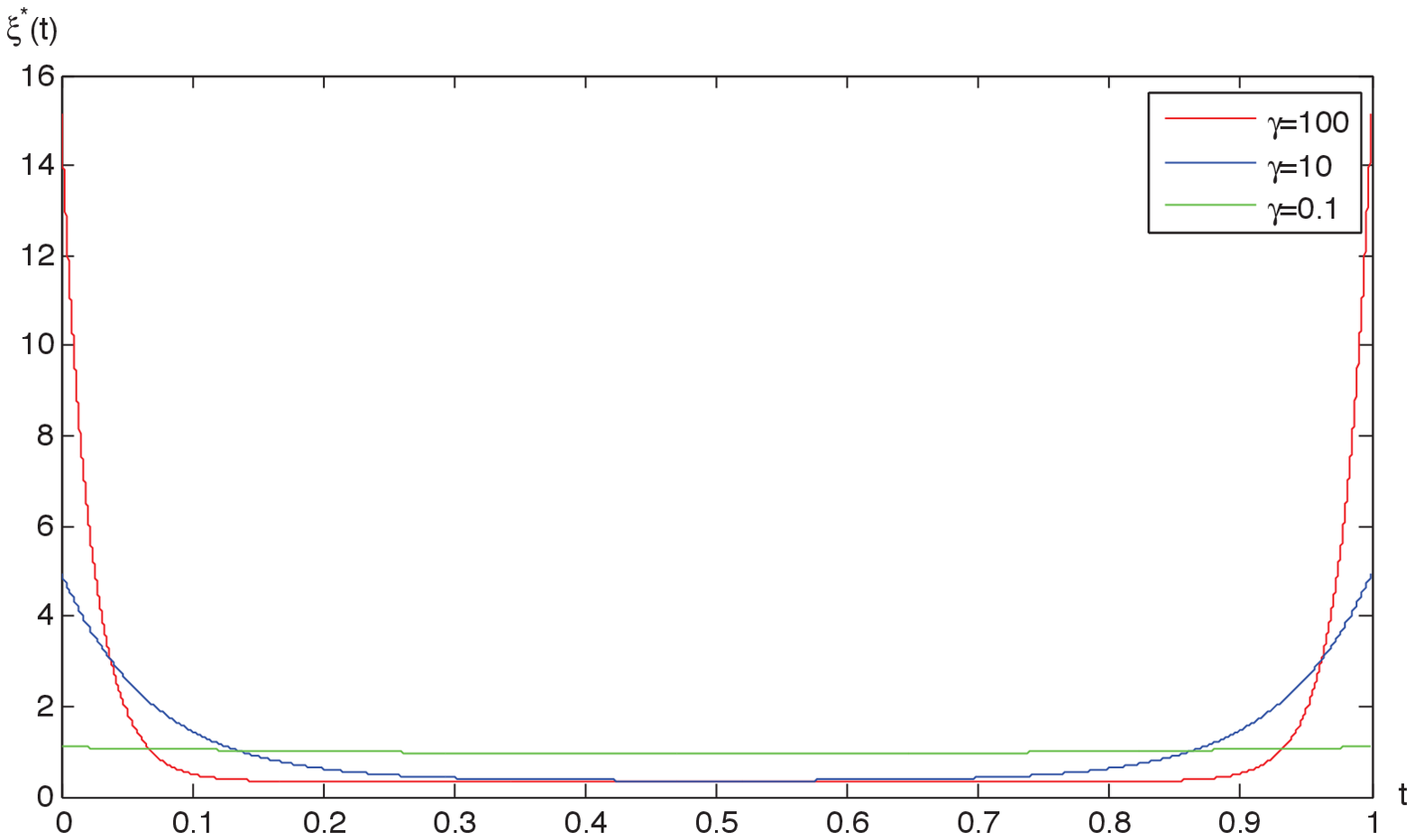}
\end{minipage}
\caption{Dependence of  the optimal position (left) and the trading strategy (right) in a single asset model on the persistent impact factor $\gamma$ for the parameter values $x= 1, y=0, T = 1, \lambda= 0.1, \sigma=0,  \rho=1$.}
\end{figure}

\begin{figure}[H]\label{figure4}
\begin{minipage}[c]{0.5\linewidth}
\centering
\includegraphics[height=4.5cm,width=7.5cm]{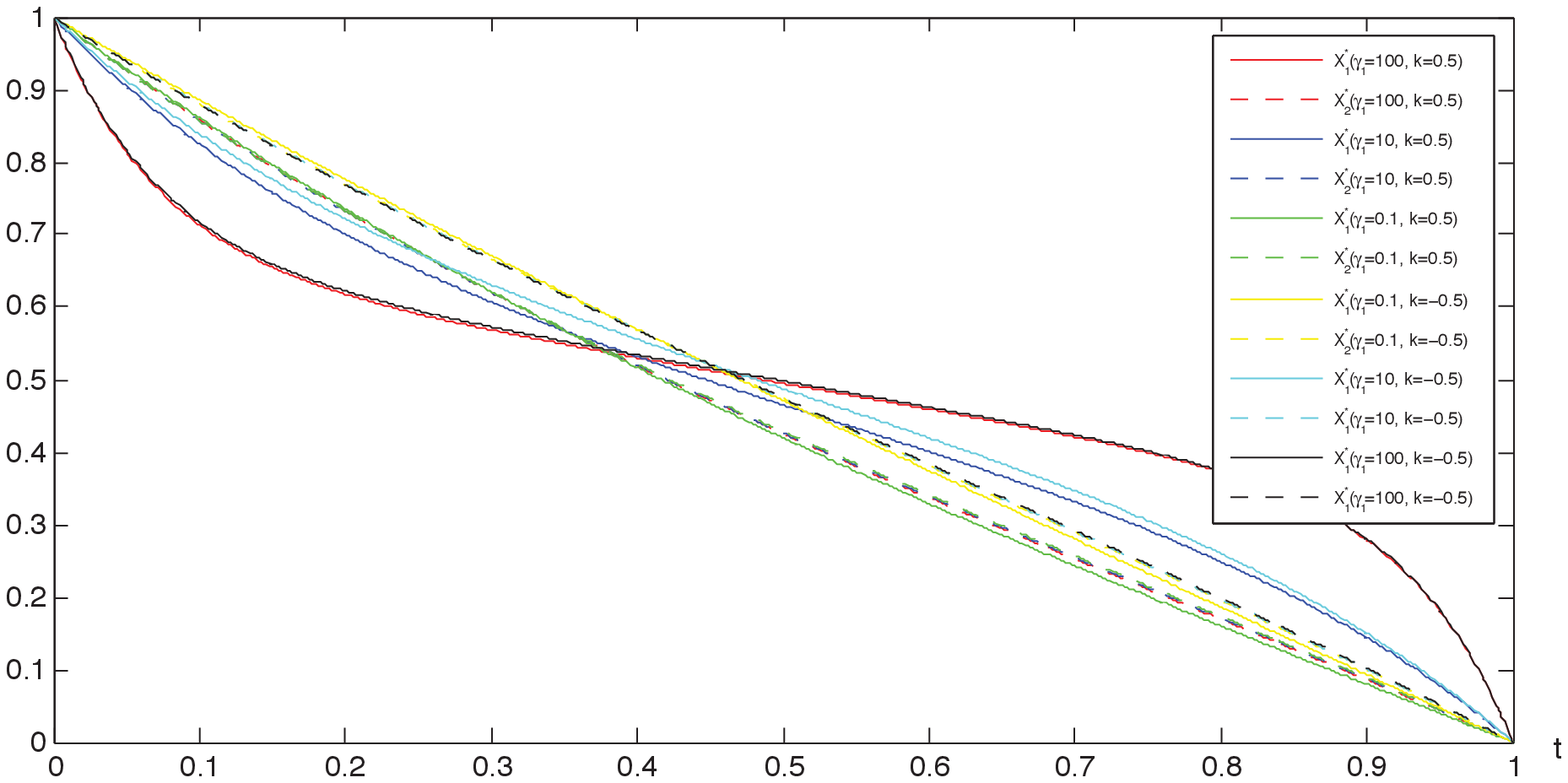}
\end{minipage}
\begin{minipage}[c]{0.5\linewidth}
\centering
\includegraphics[height=4.5cm,width=7.5cm]{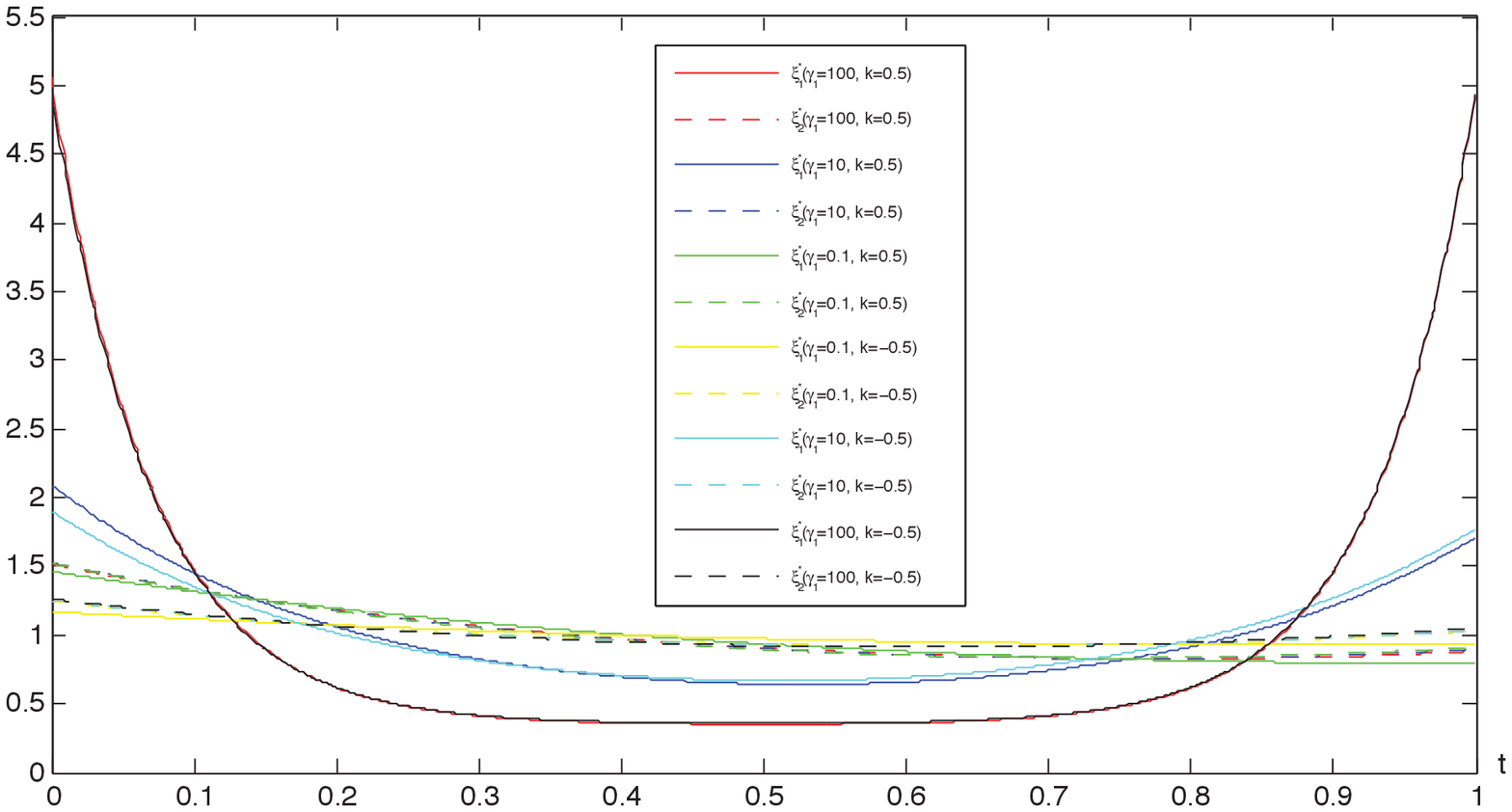}
\end{minipage}
\caption{Dependence of the optimal positions (left) and the trading strategies (right) on the persistent impact factor $\gamma_1$ for the parameter values $x_1 = x_2 = 1, y_1=y_2=0, T = 1, \lambda_1=\lambda_2 = 1, \sigma_1=\sigma_2=\gamma_2=\rho_1=\rho_2=1$.}
\end{figure}

\begin{figure}[H]\label{figure5}
\begin{minipage}[c]{0.5\linewidth}
\centering
\includegraphics[height=4.5cm,width=7.5cm]{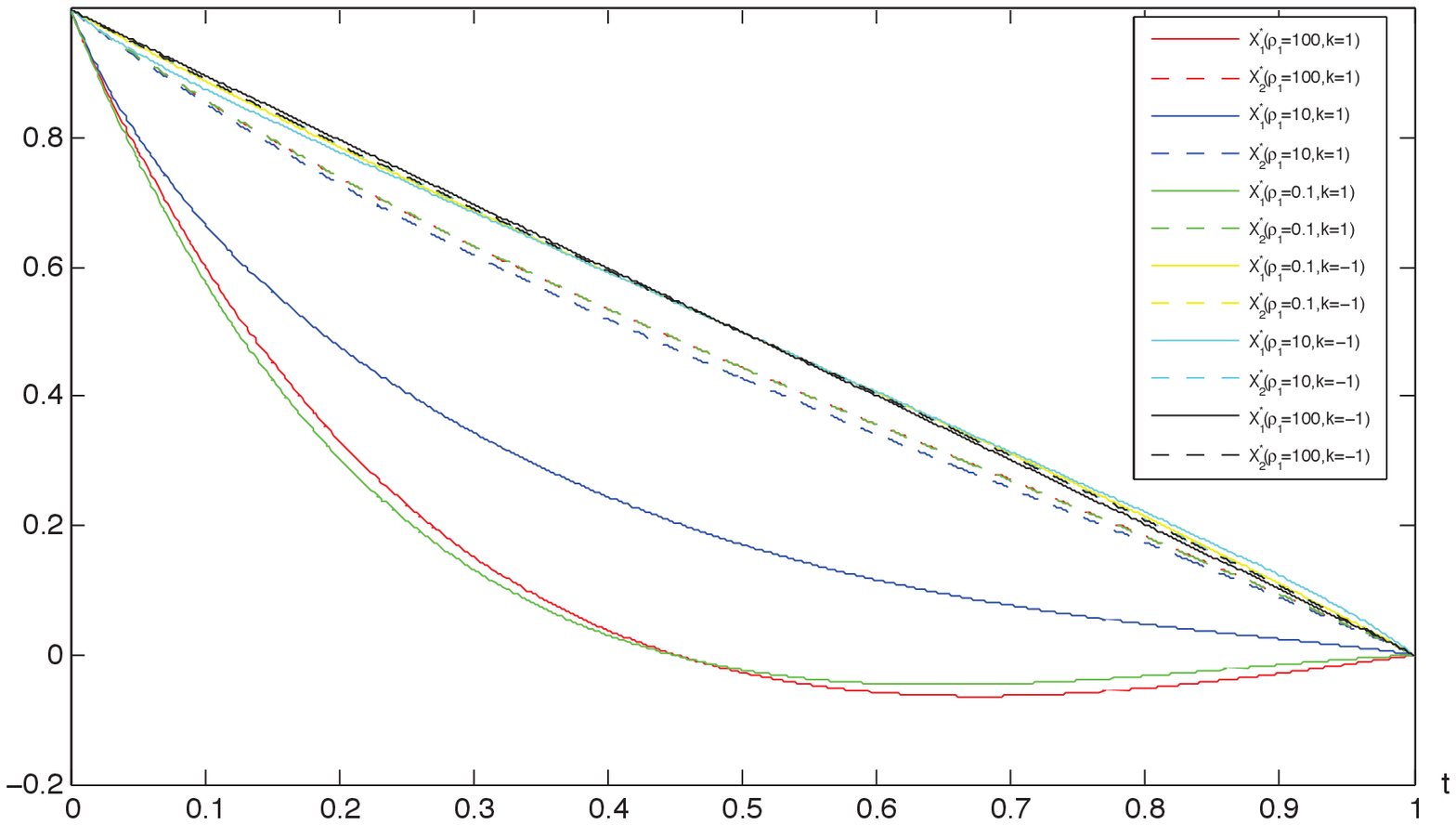}
\end{minipage}
\begin{minipage}[c]{0.5\linewidth}
\centering
\includegraphics[height=4.5cm,width=7.5cm]{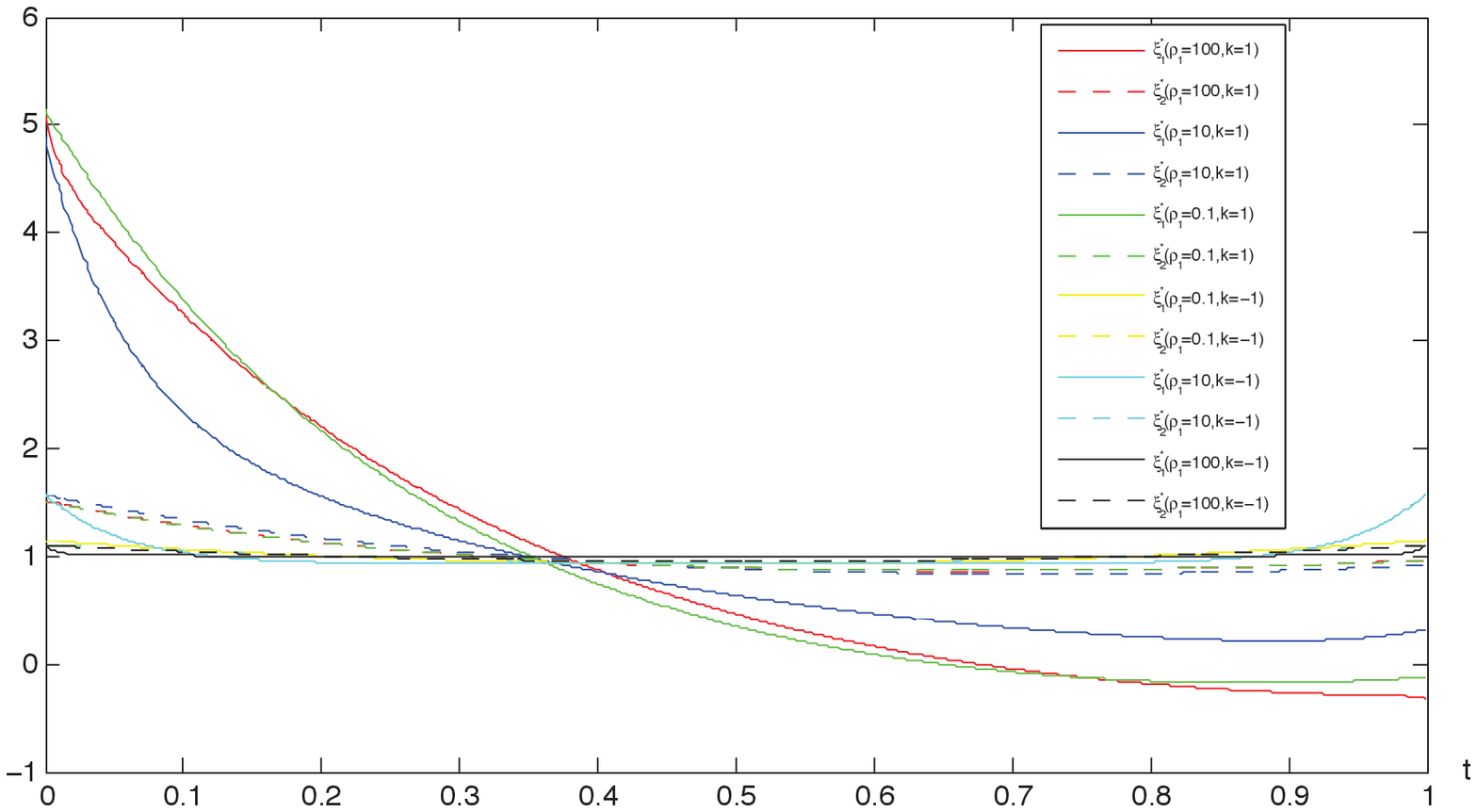}
\end{minipage}
	\caption{Dependence of the optimal positions (left) and the trading strategies (right) on the resilience factor $\rho_1$
 for the parameter values $x_1 = x_2 = 1, y_1=y_2=0, T = 1, \lambda_1=0.1,\lambda_2=\sigma_1=\sigma_2=\rho_2=\gamma_1=\gamma_2=1$.}
\end{figure}

\section{Conclusion}

In this paper we established the existence result of solutions  for a multi-dimensional stochastic control problem with singular terminal state constraint. In our model, the value function could be described by a matrix-valued LQ backward stochastic Riccati differential equation with singular terminal component. The verification argument strongly hinged on the a convergence result for the optimal strategies for a series of unconstrained control problems. Several avenues are open for further research. First, as in the one-dimensional case we cannot guarantee non-negativity of the trading rate. Second, the assumption that $\Lambda$ and $\gamma$ are constant was important to establish the a priori estimates. An extension to more general impact factors is certainly desirable. Finally, the set of admissible trading strategies was restricted to absolutely continuous ones. An extension to singular controls would be desirable as well.     

\appendix
\counterwithin{theorem}{section}

\section{Appendix}
\begin{theorem}\label{comparison}

For  $d\in  \mathbb N,$ let $G\in  L_{\mathcal F}^\infty(\Omega;C([0,T];\mathbb R^{d\times d}))$, $H_1,I_1,H_2,I_2\in  L_{\mathcal F}^\infty(\Omega;C([0,T];\mathcal S^{d}))$,  $S_1, S_2\in L_{\mathcal F}^\infty(\Omega; \mathcal S^{d}).$ Assume that  
$$S_1\leq S_2, \quad 0\leq H_{2}(\cdot)\leq H_{1}(\cdot), \quad I_{1}(\cdot)\leq I_{2}(\cdot) $$
on $[0,T].$ For $i=1,2$, let $(K_i,M_i)\in L_{\mathcal F}^\infty(\Omega;C([0,T];\mathcal S^{d}))\times L_{\mathcal F}^2(0,T;\mathcal S^{d})$ be the solution of the matrix-valued differential equation,
\begin{equation*} 
\begin{aligned}
-dK(t)&=-\{K(t)H_{i}(t)K(t)-G(t)^\mathrm TK(t)-K(t)G(t)-I_{i}(t)\}\,dt-M(t)\,dW(t),\\
K(T)&=S_i.
\end{aligned}
\end{equation*}
Then, 
$$	K_{1}(t)\leq K_{2}(t), \quad t\in [0,T].	$$
\end{theorem}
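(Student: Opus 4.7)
My plan is to reduce the matrix comparison to a scalar non-negativity statement via a pathwise linear change of variable, after which the inequality drops out of taking conditional expectations in a linearised BSDE.

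First I would set $\Delta K := K_2 - K_1$ and subtract the two Riccati equations. The only nonlinear piece to handle is $K_2 H_2 K_2 - K_1 H_1 K_1$, which I would split telescopically as
$$K_2 H_2 K_2 - K_1 H_1 K_1 = \tfrac{1}{2}(K_2+K_1) H_2 \Delta K + \tfrac{1}{2}\Delta K\, H_2 (K_2+K_1) - K_1(H_1 - H_2)K_1.$$
Setting $\Gamma(t) := \tfrac{1}{2}H_2(t)(K_2(t)+K_1(t)) - G(t)$ and using symmetry of $H_2, K_1, K_2$ to recognise $\Gamma^\mathrm T = \tfrac{1}{2}(K_2+K_1)H_2 - G^\mathrm T$, the difference satisfies the linear matrix BSDE
$$d(\Delta K) = \bigl\{\Gamma^\mathrm T \Delta K + \Delta K\, \Gamma - R\bigr\}\,dt + \Delta M\, dW, \qquad \Delta K(T) = S_2 - S_1,$$
with remainder $R(t) := K_1(t)(H_1(t) - H_2(t))K_1(t) + (I_2(t) - I_1(t))$. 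The hypotheses force $R(t) \geq 0$ and $\Delta K(T) \geq 0$, while $\Gamma$ is essentially uniformly bounded on $[0,T]\times\Omega$ since $G, H_2, K_1, K_2 \in L^\infty_{\mathcal F}$.

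Next I would linearise the drift by introducing the pathwise fundamental matrix $\Phi$ defined by
$$\frac{d\Phi}{dt} = -\Gamma(t)\Phi, \qquad \Phi(T) = I_d,$$
which, by boundedness of $\Gamma$ and Gronwall's inequality, is a well-defined continuous invertible matrix-valued process with essentially bounded inverse. Applying It\^o's formula to $Z(t) := \Phi(t)^\mathrm T \Delta K(t)\Phi(t)$, and using $d\Phi^\mathrm T = -\Phi^\mathrm T \Gamma^\mathrm T dt$ together with the absence of a martingale part in $\Phi$, the $\Gamma^\mathrm T \Delta K$ term cancels against $d\Phi^\mathrm T$ and the $\Delta K\, \Gamma$ term cancels against $d\Phi$, leaving
$$dZ(t) = -\Phi(t)^\mathrm T R(t)\Phi(t)\,dt + \Phi(t)^\mathrm T \Delta M(t) \Phi(t)\,dW(t).$$
Because $\Phi$ is uniformly bounded and $\Delta M \in L^2_{\mathcal F}(0,T;\mathcal S^d)$, the stochastic integral is a genuine $L^2$-martingale, so taking conditional expectations in the integral form yields
$$Z(t) = \mathbb E\!\left[\, (S_2 - S_1) + \int_t^T \Phi(s)^\mathrm T R(s)\Phi(s)\,ds\ \Big|\ \mathcal F_t \right] \geq 0,$$
since both summands are congruence transforms of nonnegative definite matrices, which preserve the cone $\mathcal S^d_+$. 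Invertibility of $\Phi(t)$ then gives $\Delta K(t) = (\Phi(t)^\mathrm T)^{-1} Z(t)\Phi(t)^{-1} \geq 0$, and continuity of $\Delta K$ upgrades the a.s.\ pointwise inequality to the claim for all $t\in[0,T]$.

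The main obstacle I anticipate is the algebraic bookkeeping in the It\^o computation, in particular verifying cleanly that the non-commuting cross terms between $d\Phi$, $d\Phi^\mathrm T$ and the two sides of $\Delta K$ really do cancel against the $\Gamma^\mathrm T \Delta K + \Delta K\,\Gamma$ part of the drift; once this cancellation is in place, everything else is a routine consequence of the $L^\infty$ hypotheses on $G, H_i, K_i$ and the $L^2$ assumption on $M_i$. The genuine martingale property of $\int \Phi^\mathrm T \Delta M \Phi\, dW$ is the other point one should not skip over, but follows immediately from the uniform bound on $\Phi$.
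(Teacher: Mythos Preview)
Your overall strategy---linearise the difference equation, conjugate by a fundamental solution to kill the linear drift, then take conditional expectations---is exactly the paper's idea, and your algebraic telescoping of $K_2H_2K_2-K_1H_1K_1$ is correct. However, there is one genuine gap.

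You define $\Phi$ by the \emph{terminal} condition $\Phi(T)=I_d$ together with $d\Phi/dt=-\Gamma(t)\Phi$. Since $\Gamma$ depends on $K_1,K_2,H_2,G$, which are adapted stochastic processes, the solution $\Phi(t)$ of this terminal-value ODE depends on $\{\Gamma(s):s\in[t,T]\}$ and is therefore only $\mathcal F_T$-measurable, not $\mathcal F_t$-measurable. As a consequence the process $\Phi(s)^{\mathrm T}\Delta M(s)\Phi(s)$ is anticipating, the It\^o integral $\int_t^T \Phi^{\mathrm T}\Delta M\,\Phi\,dW$ is not defined in the classical sense, and you cannot conclude it is an $L^2$-martingale from boundedness of $\Phi$ alone. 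The It\^o formula step for $Z=\Phi^{\mathrm T}\Delta K\,\Phi$ already requires the integrand to be adapted.

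The fix is immediate and is precisely what the paper does: anchor the fundamental solution at the \emph{initial} time. Fix $t\in[0,T]$, set $\Phi(t)=I_d$, and solve $d\Phi(s)/ds=-\Gamma(s)\Phi(s)$ forward on $[t,T]$. Then $\Phi(s)$ is $\mathcal F_s$-adapted, your It\^o computation is legitimate, and at $s=t$ you get directly
\[
\Delta K(t)=Z(t)=\mathbb E\!\left[\Phi(T)^{\mathrm T}(S_2-S_1)\Phi(T)+\int_t^T\Phi(s)^{\mathrm T}R(s)\Phi(s)\,ds\ \Big|\ \mathcal F_t\right]\geq 0,
\]
with no need to invert $\Phi(t)$ afterwards. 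The paper carries out exactly this argument in scalarised form: for fixed $(s,x)$ it evolves the vector $y$ forward from $y(s)=x$ along $dy=-(G-H_2K_2)y\,dt$ (linearising around $K_2$ rather than your symmetric point $\tfrac12(K_1+K_2)$, which produces an extra nonnegative term $(K_2-K_1)H_2(K_2-K_1)$ in the remainder) and reads off $x^{\mathrm T}\Delta K(s)x\geq 0$.
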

\begin{proof}
For given $(s,x) \in [0, T]\times \mathbb R^{d}$, let $y$ be the solution of
\begin{equation*}  
\left\{
\begin{aligned}
-dy(t)&=(G(t)-H_{2}(t)K_{2}(t))\,dt\\
y(s)&=x
\end{aligned}
\right.
\end{equation*}
on $[s,T].$ Then, 
\begin{equation*}  
\begin{aligned}
&\quad dy(t)^\mathrm T(K_{2}(t)-K_{1}(t))y(t)\\
&=y(t)^\mathrm T[(G(t)-H_{2}(t)K_{2}(t))^\mathrm T(K_{2}(t)-K_{1}(t))+(K_{2}(t)-K_{1}(t))(G(t)-H_{2}(t)K_{2}(t))]y(t)\,dt\\
&\quad +y(t)^\mathrm T(dK_{2}(t)-dK_{1}(t))y(t)\\
&=-y(t)^\mathrm T[(K_{2}(t)-K_{1}(t))H_{2}(t)(K_{2}(t)-K_{1}(t))+K_{1}(t)(H_{1}(t)-H_{2}(t))K_{1}(t)\\
&\quad +I_{2}(t)-I_{1}(t)]y(t)\,dt-y(t)^\mathrm T(M_{1}(t)-M_{2}(t))y(t)\,dW(t).
\end{aligned}
\end{equation*}
Thus,
\begin{equation*}  
\begin{aligned}
x^\mathrm T(K_{2}(t)-K_{1}(t))x=\mathbb E^{\mathcal F_s}\{&y(T)^\mathrm T(S_2-S_1)y(T)+\int_s^T y(t)^\mathrm T[K_{1}(t)(H_{1}(t)-H_{2}(t))K_{1}(t)\\
&+(K_{2}(t)-K_{1}(t))H_{2}(t)(K_{2}(t)-K_{1}(t))+I_{2}(t)-I_{1}(t)]y(t)\,dt\}\geq 0.
\end{aligned}
\end{equation*}
\end{proof}

\begin{lemma}\label{appendix-estimate}
Let $n>n_0$ for $n_0$ as in \eqref{n_0}. For fixed $t\in[0,T)$, there exists a constant $L$ independent of $n, s$ such that 
\begin{equation} \label{tilde estimate}
\begin{aligned}
e^{\int^s_t p^n(u)du}&\leq L\mathbb I_{s\in[0,T_0)}+\frac{L}{T-s+\frac{\lambda_{\min}}{n-\gamma_{\min}-\lambda_{\min}(1+\sqrt{1+\alpha})}}\mathbb I_{s\in[T_0,T]};\\
e^{\int^s_t- q^n(u)du}&\leq L\mathbb I_{s\in[0,T_0)}+L[T-s+\frac{\lambda_{\max}}{n-\gamma_{\max}+\lambda_{\max}}]\mathbb I_{s\in[T_0,T]},
\end{aligned}
\end{equation}
with
 $$\quad \alpha=\frac{||\Sigma||_{L^{\infty}}+2\gamma_{\max}||\rho||_{L^{\infty}}}{\lambda_{\min}}.$$
\end{lemma}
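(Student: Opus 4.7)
The plan is to treat the intervals $[0, T_0)$ and $[T_0, T]$ separately. On $[0, T_0)$ the functions $p^n$ and $q^n$ are, by definition \eqref{extension}, constants independent of $n$, so $e^{\int_t^s p^n(u)\,du}$ and $e^{-\int_t^s q^n(u)\,du}$ are uniformly dominated by a constant depending only on $T_0$ and the a priori bounds of Lemma \ref{rough estimate}. This accounts for the indicator terms $L\mathbb I_{s \in [0,T_0)}$ in both estimates. For $s \in [T_0, T]$ I would split $\int_t^s = \int_t^{T_0 \vee t} + \int_{T_0 \vee t}^s$, absorb the bounded first piece into $L$, and reduce to controlling $\int_{T_0}^s p^n$ and $\int_{T_0}^s (-q^n)$. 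Using the explicit forms \eqref{keybounds} from Lemma \ref{doubleK} and the antiderivative $\int \coth(a(T-u)+b)\,du = -a^{-1}\ln\sinh(a(T-u)+b)$, a direct computation yields
\begin{align*}
e^{\int_{T_0}^s p^n(u)\,du} &= e^{s - T_0}\,\frac{\sinh\bigl(\sqrt{1+\alpha}(T-T_0) + \kappa_1^n\bigr)}{\sinh\bigl(\sqrt{1+\alpha}(T-s) + \kappa_1^n\bigr)}, \\
e^{-\int_{T_0}^s q^n(u)\,du} &= e^{s - T_0}\,\frac{\sinh(T-s+\kappa_2^n)}{\sinh(T-T_0+\kappa_2^n)}.
\end{align*}

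Since arcoth is decreasing on $(1,\infty)$, the sequences $\kappa_1^n, \kappa_2^n$ are decreasing in $n$; thus for $n \geq n_0$ both lie in $(0, \kappa_i^{n_0}]$ and all $\cosh(\kappa_i^n)$-type factors are uniformly bounded. For the $q^n$-estimate, the addition formula together with $\sinh(z) \leq z\cosh(z)$ gives $\sinh(T-s+\kappa_2^n) \leq C\bigl[(T-s) + \sinh(\kappa_2^n)\bigr]$. The identity $\sinh(\kappa_2^n) = 1/\sqrt{(z_2^n)^2-1}$ with $z_2^n = (n-\gamma_{\max}+\lambda_{\max})/\lambda_{\max}$, combined with the factorisation $(z_2^n)^2-1 = \lambda_{\max}^{-2}\bigl[(n-\gamma_{\max}+\lambda_{\max})^2-\lambda_{\max}^2\bigr]$ and the elementary bound $\sqrt{m^2-c^2} \geq m/2$ for $m \geq 2c$ (which holds for $n \geq n_0$), yields $\sinh(\kappa_2^n) \leq L\lambda_{\max}/(n-\gamma_{\max}+\lambda_{\max})$. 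Dividing by $\sinh(T-T_0+\kappa_2^n) \geq \sinh(T-T_0) > 0$ completes the second bound.

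For the $p^n$-estimate the numerator is uniformly bounded (since $\kappa_1^n \in (0, \kappa_1^{n_0}]$), and the denominator is bounded below via the addition formula with $\sinh(z) \geq z$ and $\cosh(z) \geq 1$ as $\sinh(\sqrt{1+\alpha}(T-s)+\kappa_1^n) \geq \sqrt{1+\alpha}(T-s)+\sinh(\kappa_1^n)$. Using $\sinh(\kappa_1^n) = 1/\sqrt{(y_1^n)^2-1}$ with $y_1^n = (n-\gamma_{\min}-\lambda_{\min})/(\lambda_{\min}\sqrt{1+\alpha})$ and the algebraic factorisation
$$
(y_1^n)^2 - 1 = \frac{\bigl(n-\gamma_{\min}-\lambda_{\min}(1+\sqrt{1+\alpha})\bigr)\bigl(n-\gamma_{\min}+\lambda_{\min}(\sqrt{1+\alpha}-1)\bigr)}{\lambda_{\min}^2(1+\alpha)},
$$
one isolates the threshold factor $A := n-\gamma_{\min}-\lambda_{\min}(1+\sqrt{1+\alpha})$, which is precisely the positivity condition ensuring $y_1^n > 1$ (equivalent to $n > n_0$). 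A suitable lower bound on $\sinh(\kappa_1^n)$ in terms of $A$, combined with the $\sqrt{1+\alpha}$-factor extracted from the denominator, then produces the required form $(T-s) + \lambda_{\min}/A$.

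The main obstacle is the delicate bookkeeping of the constant in the $p^n$-bound. Naïve estimates such as $\operatorname{arcoth}(y) \geq 1/y$, or AM--GM applied directly to $\sqrt{AB}$, yield only the weaker denominator $n-\gamma_{\min}-\lambda_{\min}$. To recover the tighter threshold $n-\gamma_{\min}-\lambda_{\min}(1+\sqrt{1+\alpha})$ one has to exploit the factorisation displayed above together with the ordering $B := n-\gamma_{\min}+\lambda_{\min}(\sqrt{1+\alpha}-1) > A > 0$ for $n > n_0$, bounding each factor separately rather than collectively. Everything else reduces to routine hyperbolic identities and the uniform boundedness of $\cosh(\kappa_i^n)$ for $n \geq n_0$.
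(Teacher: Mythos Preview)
Your route differs from the paper's. Instead of integrating $p^n,q^n$ exactly and then estimating the resulting $\sinh$–ratios, the paper first replaces $p^n,q^n$ on $[T_0,T]$ by the elementary pointwise bounds
\[
\tilde p^n(t)=\frac{1}{(T-t)+\dfrac{\lambda_{\min}}{n-\gamma_{\min}-\lambda_{\min}(1+\sqrt{1+\alpha})}}+1,
\qquad
\tilde q^n(t)=\frac{1}{(T-t)+\dfrac{\lambda_{\max}}{n-\gamma_{\max}+\lambda_{\max}}}-1,
\]
and then integrates these rational functions in closed form, treating the three cases $t<s<T_0$, $t<T_0\le s$, $T_0\le t<s$ separately. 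This yields the stated estimates in one line per case, with no hyperbolic addition formulae, no separate analysis of $\sinh(\kappa_i^n)$, and no factorisation of $(y_1^n)^2-1$. Your direct computation has the merit of exhibiting the exact $\sinh$–ratio and making the $n$–dependence through $\kappa_i^n$ completely transparent, while the paper's substitution buys brevity: once $p^n\le\tilde p^n$ and $\tilde q^n\le q^n$ are accepted, everything reduces to $\int \frac{du}{(T-u)+c}=-\ln((T-u)+c)$.

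There is, however, a genuine gap in your $p^n$–argument at the step you yourself flag as ``delicate''. Your lower bound on the denominator reads $\sinh(\sqrt{1+\alpha}(T-s)+\kappa_1^n)\ge \sqrt{1+\alpha}(T-s)+\sinh(\kappa_1^n)$, so comparing with the target $(T-s)+\lambda_{\min}/A$ requires $\sinh(\kappa_1^n)\ge c\,\lambda_{\min}/A$ for a constant $c$ independent of $n$. But from your own factorisation $\sinh(\kappa_1^n)=\lambda_{\min}\sqrt{1+\alpha}/\sqrt{AB}$, so the ratio $\sinh(\kappa_1^n)\big/(\lambda_{\min}/A)=\sqrt{(1+\alpha)A/B}$ tends to $0$ as $n\downarrow n_0$ (since then $A\to 0$ while $B\to 2\lambda_{\min}\sqrt{1+\alpha}>0$). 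The same obstruction arises if you use $\sinh(x)\ge x$ and work with $\kappa_1^n=\tfrac12\ln(B/A)$ directly. Thus the ``suitable lower bound'' you invoke does not exist uniformly in $n>n_0$, and your argument as written does not close for $n$ near $n_0$. The paper avoids this difficulty by hard-wiring the constant $\lambda_{\min}/A$ into the comparison function $\tilde p^n$ from the outset rather than trying to recover it a posteriori from $\kappa_1^n$.
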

\begin{proof}
In the following proof we introduce simpler bounds $\tilde p^n ,\tilde q^n $ for $ p^n , q^n $ to simplfy the calculations. For $n>n_0,$ $\tilde q^n\leq q^n,  p^n\leq \tilde p^n$  where $\tilde q^n, \tilde p^n$are given by
 \begin{equation*}
\begin{aligned}
\tilde p^n(t)&=\left\{
\begin{aligned}
& p^n(t), \qquad \qquad \qquad \qquad \qquad \quad t\in[0,T_0);\\
&\frac{1}{T-t+\frac{1}{\frac{n-\gamma_{\min}}{\lambda_{\min}}-\sqrt{1+\alpha}-1}}+1, \quad t\in[T_0,T].
\end{aligned}\right.\\
\tilde q^n(t)&=\left\{
\begin{aligned}
& p^n(t), \qquad \qquad \qquad \qquad \qquad \quad t\in[0,T_0);\\
&\frac{1}{T-t+\frac{1}{\frac{n-\gamma_{\max}}{\lambda_{\max}}+1}}-1,\qquad \quad\   t\in[T_0,T].
\end{aligned}\right.\\
\end{aligned}
\end{equation*}
 Hence we need to prove that, 
\begin{eqnarray*} 
 e^{\int^s_t\tilde p^n(u)du} & \leq & L \mathbb I_{s\in[0,T_0)}+\frac{L}{T-s+\frac{\lambda_{\min}}{n-\gamma_{\min}-\lambda_{\min}(1+\sqrt{1+\alpha})}}\mathbb I_{s\in[T_0,T]};\\
e^{\int^s_t-\tilde q^n(u)du} & \leq & L  \mathbb I_{s\in[0,T_0)}+L[T-s+\frac{\lambda_{\max}}{n-\gamma_{\max}+\lambda_{\max}}]\mathbb I_{s\in[T_0,T]}.
\end{eqnarray*}
For $0\leq t< s <T_0,$
\begin{equation*} 
\begin{aligned}
e^{\int^s_t\tilde p^n(u)du}=e^{\tilde p^n(0)(s-t)}\leq e^{\tilde p^n(0)T_0}; \quad e^{\int^s_t-\tilde q^n(u)du}=e^{-\tilde q^{n}(0)(s-t)}\leq 1.
\end{aligned}
\end{equation*}
For $0\leq t<T_0\leq s\leq T ,$
\begin{equation*} 
\begin{aligned}
e^{\int^s_t\tilde p^n(u)du} & = e^{\tilde p^n(0)(T_0-t)}e^{(s-T_0)}\left[\frac{T-T_0+\frac{\lambda_{\min}}{n-\gamma_{\min}-\lambda_{\min}(1+\sqrt{1+\alpha})}}{T-s+\frac{\lambda_{\min}}{n-\gamma_{\min}-\lambda_{\min}(1+\sqrt{1+\alpha})}}\right]\\
& \leq e^{\tilde p^n(0)T_0}e^T\left[\frac{T-T_0+\frac{\lambda_{\min}}{n_2	-\gamma_{\min}-\lambda_{\min}(1+\sqrt{1+\alpha})}}{T-s+\frac{\lambda_{\min}}{n-\gamma_{\min}-\lambda_{\min}(1+\sqrt{1+\alpha})}}\right]
\end{aligned}
\end{equation*}
and
\begin{equation*} 
\begin{aligned}
e^{\int^s_t-\tilde q^n(u)du} & =e^{-\tilde q^{n}(0)(T_0-t)}e^{(s-T_0)}\left[\frac{T-s+\frac{\lambda_{\max}}{n-\gamma_{\max}+\lambda_{\max}}}{T-T_0+\frac{\lambda_{\max}}{n-\gamma_{\max}+\lambda_{\max}}}\right]\\
& \leq e^T\left[\frac{T-s+\frac{\lambda_{\max}}{n-\gamma_{\max}+\lambda_{\max}}}{T-T_0}\right].
\end{aligned}
\end{equation*}
For $T_0\leq t< s \leq T ,$
\begin{equation*} 
\begin{aligned}
e^{\int^s_t\tilde p^n(u)du} & = e^{(s-t)}\left[\frac{T-t+\frac{\lambda_{\min}}{n-\gamma_{\min}-\lambda_{\min}(1+\sqrt{1+\alpha})}}{T-s+\frac{\lambda_{\min}}{n-\gamma_{\min}-\lambda_{\min}(1+\sqrt{1+\alpha})}}\right]\\
& \leq e^T\left[\frac{T-t+\frac{\lambda_{\min}}{n_2	-\gamma_{\min}-\lambda_{\min}(1+\sqrt{1+\alpha})}}{T-s+\frac{\lambda_{\min}}{n-\gamma_{\min}-\lambda_{\min}(1+\sqrt{1+\alpha})}}\right];
\end{aligned}
\end{equation*}
and
\begin{equation*} 
\begin{aligned}
e^{\int^s_t-\tilde q^n(u)du} & =e^{(s-t)}\left[\frac{T-s+\frac{\lambda_{\max}}{n-\gamma_{\max}+\lambda_{\max}}}{T-t+\frac{\lambda_{\max}}{n-\gamma_{\max}+\lambda_{\max}}}\right]\\
& \leq e^T\left[\frac{T-s+\frac{\lambda_{\max}}{n-\gamma_{\max}+\lambda_{\max}}}{T-t}\right].
\end{aligned}
\end{equation*}
Therefore, for fixed $t\in[0,T)$, there exists a constant $L$ independent of $n, s$ such that 
\begin{equation*} 
\begin{aligned}
e^{\int^s_t p^n(u)du}&\leq e^{\int^s_t\tilde p^n(u)du}\leq L \mathbb I_{s\in[0,T_0)}+\frac{L}{T-s+\frac{\lambda_{\min}}{n-\gamma_{\min}-\lambda_{\min}(1+\sqrt{1+\alpha})}}\mathbb I_{s\in[T_0,T]};\\
e^{\int^s_t-q^n(u)du}&\leq e^{\int^s_t-\tilde q^n(u)du}\leq L \mathbb I_{s\in[0,T_0)}+L[T-s+\frac{\lambda_{\max}}{n-\gamma_{\max}+\lambda_{\max}}]\mathbb I_{s\in[T_0,T]}.
\end{aligned}
\end{equation*}
\end{proof}
\bibliographystyle{siam}
{
\bibliography{bib_diss,solution}
}

\end{document}